
\documentclass[12pt]{article}
\usepackage{amsmath,amssymb,amsthm}

\textwidth 170 mm
\textheight 230mm
\topmargin -10mm
\oddsidemargin -10mm
\evensidemargin  0mm

 \newcommand{\beqn}{\begin{eqnarray}}
 \newcommand{\eeqn}{\end{eqnarray}}
 \newcommand{\be}{\begin{equation}}
 \newcommand{\ee}{\end{equation}}
 \newcommand{\ba}{\begin{array}}
 \newcommand{\ea}{\end{array}}
 
 \newcommand{\pa}{\partial}
 \newcommand{\re}{\ref}
 \newcommand{\ci}{\cite}
 \newcommand{\ds}{\displaystyle}
 \newcommand{\la}{\label}
 \newcommand{\fr}{\frac}

\newcommand{\ov}{\overline}
\newcommand{\ti}{\tilde}

\newcommand{\bP}{{\bf P}}

\newcommand{\cH}{{\cal H}}

\newcommand{\cO}{{\cal O}}

\newcommand{\cT}{{\cal T}}
\newcommand{\cS}{{\cal S}}

\newcommand{\ve}{\varepsilon}

\newcommand{\al}{\alpha}
\newcommand{\ga}{\gamma}

\newcommand{\si}{\sigma}

\newcommand{\Om}{\Omega}

\newcommand{\Si}{\Sigma}
\newcommand{\lam}{\lambda}

\newcommand{\5}{{\hspace{0.5mm}}}

\newcommand\R{{\mathbb R}}

\renewcommand{\theequation}{\thesection.\arabic{equation}}
\renewcommand{\thesection}{\arabic{section}}
\renewcommand{\thesubsection}{\arabic{section}.\arabic{subsection}}
\newtheorem{theorem}{Theorem}[section]

\renewcommand{\thetheorem}{\arabic{section}.\arabic{theorem}}
\newtheorem{definition}[theorem]{Definition}

\newtheorem{lemma}[theorem]{Lemma}
\newtheorem{remark}[theorem]{Remark}

\newtheorem{cor}[theorem]{Corollary}
\newtheorem{pro}[theorem]{Proposition}

\newcommand{\bd}{\begin{definition}}
 \newcommand{\ed}{\end{definition}}
\newcommand{\bt}{\begin{theorem}}
 \newcommand{\et}{\end{theorem}}

\newcommand{\bp}{\begin{pro}}
\newcommand{\ep}{\end{pro}}

\newcommand{\bl}{\begin{lemma}}
 \newcommand{\el}{\end{lemma}}
\newcommand{\bco}{\begin{cor}}
 \newcommand{\eco}{\end{cor}}
\newcommand{\br}{\begin{remark} }
 \newcommand{\er}{\end{remark}}

\newcommand{\const}{\mathop{\rm const}\nolimits}

\pagenumbering{arabic}
\pagestyle{myheadings} \markright{On asymptotic stability of  moving kinks
for relativistic Ginzburg-Landau equation}

\begin{document}
\begin{titlepage}
\bigskip\bigskip\bigskip

\begin{center}
{\Large\bf
On asymptotic stability of moving kink for  \bigskip\\
relativistic Ginzburg-Landau equation}
\vspace{1cm}
\\
{\large E.~A.~Kopylova}
\footnote{Supported partly by the grants of
DFG, FWF and RFBR.}\\
{\it Institute for Information Transmission Problems RAS\\
B.Karetnyi 19, Moscow 101447,GSP-4, Russia}\\
e-mail:~elena.kopylova@univie.ac.at
\medskip\\
{\large A.~I.~Komech}$^{\mbox{\scriptsize 1,}}\!\!$
\footnote{Supported partly by the Alexander von Humboldt
Research Award.}\\
{\it Fakult\"at f\"ur Mathematik, Universit\"at Wien\\
and Institute for Information Transmission Problems RAS}\\
 e-mail:~alexander.komech@univie.ac.at

\end{center}

\date{}
\vspace{0.5cm}

\begin{abstract}
\noindent
We prove the asymptotic stability of the moving kinks for the nonlinear
relativistic wave equations
in one space dimension
 with a Ginzburg-Landau potential: starting in a small 
neighborhood of the kink,  the solution,
asymptotically in time, is the sum of a uniformly 
moving kink and dispersive part
described by the free Klein-Gordon equation.
The remainder decays in a global energy norm.
Crucial role in the proofs play our recent results on
the weighted energy decay for the Klein-Gordon equations.

\noindent
{\em Keywords}: Relativistic nonlinear wave equation,
asymptotic stability, kink,
weighted energy decay, symplectic projection,
modulation equations.
\smallskip

\noindent
{\em 2000 Mathematics Subject Classification}: 35Q51, 37K40.
\end{abstract}

\end{titlepage}

\setcounter{equation}{0}

\section{Introduction}

There has been wide spread interest in the dynamics of topological excitations 
of classical relativistic field theories \ci{Bais82, Bjorn98}.
These excitations are finite energy solutions which do not decay to
one of 
the true ground 
states because of topological constraints, said differently, these 
excitations are separated by an infinitely high potential barrier from
the ground state.
In our contribution we will study in mathematical detail one of the
simplest 
examples. 
The field, $\psi$, is real valued and defined on the line,
$\psi:\R\to\R$. 
The Hamiltonian function reads
\be\la{Ham}
 {\cal H}(\psi,\pi)=\int_\R\Big[\fr12{|\pi(x)|^2}+\fr12{|\psi'(x)|^2}
 + U(\psi(x))\Big]~dx
\ee
with $\pi$ 
the momentum canonically conjugate to $\psi$ and a smooth potential $U$.
 This leads to the equation
of motion  
\be\la{e}
\ddot\psi(x,t)=\psi''(x,t)+ F(\psi(x,t)),\quad x\in\R,
\ee
where $F(\psi)=-U'(\psi)$.
For an introduction, let us consider the Ginzburg-Landau 
quartic double well potential of the form 
$U(\psi)=(\psi^2-a^2)^2/(4a^2)$. Then 
the topological excitations are defined through $\cH<\infty$ and 
the boundary conditions
\be\la{asc} 
\lim_{x\to\pm\infty}\psi(x,t)\to\pm a
\ee  
 with a fixed $a>0$.
Amongst them there are soliton-like solutions which travel with
constant velocity,
$$
\psi(x,t)
=a\tanh\ga\fr{x-vt-q}{\sqrt{2}}.
$$
where 
 $\ga=1/\sqrt{1-v^2}$ is the Lorentz contraction.
The solitons (\re{sls}) are related by a Lorentz boost, since equation
(\re{e}) is relativistically invariant.
We will consider more general 
 double well
potentials for which
\be\la{U1f}
U(\pm a)=U'(\pm a)=0,~~~~~U''(\pm a)>0,
\ee
and 
\be\la{U1}
U(\psi)>0~~ {\rm for}~~ \psi\in(-a,a),
\ee
similarly to the quartic potential.
In this case the soliton-like solutions also exist,
\be\la{sls}
\psi(x,t)
=s(\ga(x-vt-q)),~~~~~~~~~~~~v,q\in\R,~~~~~|v|<1
\ee
where 
$s(\cdot)$ is a "kink" solution to the 
corresponding stationary equation 
\be\la{steq}
 s''(x) - U'(s(x))=0, ~~~~~~~~s(\pm\infty)=\pm a.
\ee

In general our goal is to clarify the special role of the 
soliton-like solutions (\re{sls})  
as long time asymptotics for any finite energy topological 
excitations satisfying 
(\re{asc}). Namely, if one chooses some arbitrary finite energy
initial state  
 satisfying 
(\re{asc}), one would expect that for $t\to\infty$ the solution separates 
into two pieces: one piece is a finite collection of 
travelling solitons of the form  (\re{sls}) 
and their negatives
with some velocities
$v_j\in(-1,1)$ and the shifts $q_j$ depending in a complicated way on the
initial 
data,
 and the second radiative piece which is a dispersive 
solution to the free Klein-Gordon equation which propagates
to infinity with the velocity $1$. 
Our aim here is to elucidate this general picture by mathematical 
arguments for initial data sufficiently close to a soliton (\re{sls}).

Let us discuss our choice 
of the smooth potentials $U$. 
The condition (\re{U1}) is necessary and sufficient 
for the existence of a finite energy 
static solution $s(x)$ to 
(\re{steq}) when (\re{U1f}) holds.
Indeed, the condition is obviously sufficient.
On the other hand, 
 the "energy conservation" 
\be\la{sec}
(s'(x))^2/2-U(s(x))=E
\ee
and $s(\pm\infty)=\pm a$ imply that  $E=0$.
Therefore, $U(\psi)>0$ for $\psi\in(-a,a)$ since otherwise the boundary conditions 
$s(\pm \infty)=\pm a$ would fail.
As a byproduct, our kink solution is monotone increasing, and
\be\la{kinkmon}
s'(x)>0,~~~~~~~~~x\in\R
\ee
Let us note that only the behavior of $U$ near the
interval $[-a,a]$ is of importance since the solution is expected to be close 
to a soliton.
However, we will assume additionally the potential
to be bounded from below
\be\la{infU}
\inf\limits_{\psi\in\R}U(\psi)>-\infty \quad\quad\quad\quad
\ee
to have a well posed  Cauchy problem for all finite energy initial states.

Summarising, we formulate our first basic condition  
on the potential, for technical reasons
adding a flatness condition.  
\\
{\bf Condition U1}. {\it
 The potential $U$ is a real
 smooth function
which satisfies 
(\re{U1f}), ( \re{U1}), (\re{infU}), and
the
following condition holds with some $m>0$}
\be\la{U11}
~~~~~~~~~U(\psi)=\fr{m^2}2(\psi\mp a)^2+\cO(|\psi\mp a|^{14}),~~~~~~~
\psi\to\pm a
\ee

Let us comment on the condition (\re{U11}) (see also Remark \re{rU1}).
First, the condition means that $U''(-a)=U''(a)$
though we do not need the potential to be reflection symmetric.
We consider the solutions close to the kink, $\psi(x,t)=
s(\ga(x-vt-q))+\phi(x,t)$, with
small perturbations $\phi(x,t)$. For such solution the condition
(\re{U11}) and the asymptotics (\re{asc})
mean that the equation (\re{e}) is almost linear Klein-Gordon equation
for large $|x|$ which is helpful for application
of the dispersive properties. 
Finally, we expect that 
the degree $14$ in (\re{U11})
is technical, and a smaller degree should be sufficent.
Let us note that a similar condition has been introduced in \ci{BP,BS}
in the context of the Schr\"odinger equation.
\medskip

Further we need some assumptions on the spectrum of 
the linearised equation. Let us rewrite the equation (\ref{e})
in the vector form,
\beqn\la{eq}
\left\{ \ba{lll}
 \dot{\psi}(x,t)=\pi (x,t)\\
 ~ &  \\
 \dot{\pi}(x,t)=\psi'' (x,t)+  F(\psi(x,t))
\ea\right|~~~x\in\R
\eeqn
Now the soliton-like solutions (\ref{sls}) become
\be\la{sosol}
Y_{q,v}(t)=(\psi_v(x-vt-q),\pi_v(x-vt-q))
\ee
for $q,v\in\R$ with $|v|<1$, where
\be\la{sol}
\psi_v(x)=s(\gamma x),\quad\pi_v(x)=-v\psi_v'(x).
\ee
The states $S_{q,v}:=Y_{q,v}(0)$ form the solitary manifold
\be\la{soman}
{\cal S}:=\{ S_{q,v}: q,v\in\R, |v|<1 \}.
\ee
The linearized operator near the soliton solution $Y_{q,v}(t)$ is
(see Section \ref{lin-sec}, formula (\re{AA1}))
$$
A_v=\left(\ba{cc}
    v\nabla            &          1   \\
    \Delta-m^2-V_v(y)  &          v\nabla
\ea\right),\quad\nabla=\fr{d}{dx},\quad\Delta=\fr{d^2}{dx^2},
$$
where
\be\la{AH}
V_v(x)=-F'(\psi_v(x))-m^2=U''(\psi_v(x))-m^2.
\ee
By (\ref{s-decay}), we have
\be\la{V-decay}
 V_v(x)\sim C(s(\ga x)\mp a)^{12}\sim Ce^{-12m\ga|x|},\quad x\to\pm\infty.
\ee
since
\be\la{s-decay}
s(x)\mp a \sim Ce^{-m|x|},\quad x\to\pm\infty
\ee
by the condition {\bf U1}.

In Section \ref{lin-sec} we show that the spectral properties of the operator $A_v$
are determined by the corresponding properties of its determinant, which is the Schr\"o\-din\-ger operator
\be\la{Hv}
H_{v}=-(1-v^2)\Delta+m^2+ V_v.
\ee
The spectral properties of $H_{v}$ are identical for all $v\in (-1,1)$ since
the relation $V_v(x)=V_0(\ga x)$ implies
\be\la{THT}
H_v=T_v^{-1}H_0T_v,~~{\rm where}~~ T_v:\psi(x)\mapsto\psi(x/\gamma).
\ee
This equivalence manifests the relativistic invariance of the equation (\re{eq}).
The continuous spectrum of the operator $H_{v}$ coincides with $[m^2,\infty)$.
The point $0$ belongs to the discrete spectrum with corresponding eigenfunction
$\psi'_v$.
By (\ref{sol}) and (\ref{kinkmon}) we have
$\psi'_v(x)=\ga s'(\ga x)>0$ for $x\in\R$. Hence, $\psi'_v$ is the groundstate,
and all remaining discrete spectrum is contained in $(0,m^2]$.

For $\al\in\R$,  $p\ge 1$, and  $l=0,1,2,...$ let us denote by
$W^{l,p}_{\al}$,
the  weighted Sobolev space of the functions with the finite norm
$$
\Vert\psi\Vert_{W^{l,p}_\al}=
\sum\limits_{k=0}^l\Vert(1+|x|)^{\al}\psi^{(k)}\Vert_{L^{p}}<\infty
$$
and  $H^l_\al:=W^{l,2}_\al$, so $L^2_\al:=H^0_\al$ are the Agmon's weighted spaces.
\begin{definition} (cf. \cite{JN, M})
A  nonzero solution $\psi\in L^2_{-1/2-0}(\R)\setminus L^2(\R)$
to $H_v\psi=m^2\psi$ is called a resonance.
\end{definition}
Now we can formulate our second basic condition on the
potential.\\
{\bf Condition U2}.
{\it For any $v\in (-1,1)$\\
i) $0$ is only eigenvalue of $H_{v}$. \\
ii) $m^2$ is not
a resonance of $H_v$.
}
\\
We show that  Condition {\bf U2} implies the boundedness of
the resolvent of  the  operator $A_v$ 
in the corresponding weighted Agmon spaces
at the edge points  $\pm im/\ga$
of its continuous spectrum.

Both conditions {\bf U1}, {\bf U2} can be satisfied though it is
non-obvious. 
Let us note that the quartic Ginzburg-Landau potential 
does not satisfy nor 
(\re{U11}) nor the  conditions {\bf U2} i) nor {\bf U2} ii).
We will prove elsewhere that
the corresponding examples of potentials 
satisfying both {\bf U1} and {\bf U2}
can be constructed as smoothened piece-wise quadratic potentials.
\medskip

We now can formulate 
the main result of our paper.
Namely, we will prove
the following asymptotics
\be\la{Si}
 (\psi(x,t), \pi(x,t))\sim
 (\psi_{v_{\pm}}(x-v_{\pm}t-q_{\pm}), \pi_{v_{\pm}}(x-v_{\pm}t-q_{\pm}))
 +W_0(t){\Phi}_{\pm},\quad t\to\pm\infty
\ee
for solutions to  (\re{eq}) with  initial states close to 
a soliton-like solution (\re{sls}).
Here $W_0(t)$ is the dynamical group of the free Klein-Gordon equation,
${\Phi}_\pm$ are the corresponding asymptotic  states, and the remainder
converges to zero $\sim t^{-1/2}$ in the global energy norm
of the Sobolev space $H^1(\R)\oplus L^2(\R)$.
\medskip

Let us comment on previous results in this field.
\medskip\\
$\bullet$ {\it Orbital stability of the kinks}
For 1D relativistic nonlinear Ginzburg-Landau equations (\re{e}) the orbital
stability of the kinks has been proved in \ci{HPW}.
\medskip\\
$\bullet$ {\it The Schr\"odinger equation}
The asymptotics of type (\re{Si}) were established for the first time
by Soffer and Weinstein \ci{SW1,SW2} (see also \ci{PW}) for nonlinear
$U(1)$-invariant Schr\"odinger equation with a potential for small initial
states and sufficiently small nonlinear coupling constant.

The results have been extended by Buslaev and Perelman \ci{BP} to the
translation invariant 1D nonlinear  $U(1)$-invariant Schr\"odinger equation.
The novel techniques \ci{BP} are based on the "separation of variables"
along the solitary manifold and in the transversal directions.
The symplectic projection allows to exclude the unstable directions corresponding
to the zero discrete spectrum of the linearized dynamics.
Similar techniques were developed by Miller, Pego and Weinstein for the
1D modified KdV and RLW equations, \ci{MW96,PW94}.The extensions to higher
dimensions were obtained in \ci{Cu01,KZ07,RSS05,TY02}.
\medskip\\
$\bullet$
{\it Nonrelativistic Klein-Gordon  equations}
The asymptotics of type (\re{Si}) were extended to the nonlinear 3D Klein-Gordon
equations with a potential \ci{SW99}, and for translation invariant system of the
3D Klein-Gordon equation coupled to a particle \ci{IKV05}.
\medskip\\
$\bullet$ {\it Wave front of 3D Ginzburg-Landau  equation}
The asymptotic stability of wave front
 was proved for 3D relativistic Ginzburg-Landau equation
with initial data which differ from the wave front  on a compact set
\cite{Cu}.
The wave front is the solution 
which depends on one space variable only, so it is not a soliton.
The equation differs from the 1D equation (\ref{e}) by the
additional 2D Laplacian. The additional Laplacian improves the dispersive
decay for the corresponding linearized  Klein-Gordon equation
in the continuous spectral space that provides the needed decay for the
transversal dynamics.
\medskip

The proving of the asymptotic stability of the solitons and kinks for
relativistic equations remained an open problem till now.
The investigation
crucially depends on the spectral properties for linearized equation
which are completely unknown for higher dimensions.
For 1D case main obstacle was
the slow decay $\sim t^{-1/2}$ for the free 1D Klein-Gordon equation
(see the discussion in \ci[Introduction]{Cu}).

Let us comment on our approach. We  follow general strategy of
\ci{BP,BS,Cu01,Cu,IKV05,SW99}: symplectic projection onto the
solitary manifold, modulation equations, linearization of
the transversal equations and further Taylor expansion of the
nonlinearity etc. We develop for relativistic equations general scheme
which is common in almost all papers in this area: dispersive
estimates for high energy and low energy components of the solutions 
to linearized equation, virial and
$L^1-L^\infty$ estimates and method of majorants.
However, the corresponding statements and their proofs
in the context of relativistic equations are completely new.

Let us comment on our novel techniques.
\medskip\\
I. The decay $\sim t^{-3/2}$ from Theorem \re{1d} for the linearized 
transversal dynamics relies on our novel approach \cite{KK10,1dkg} to 1D  
Klein-Gordon equation. 
\medskip\\
II. The novel "virial type" estimate (\re{jv1}) 
is relativistic version of the bound
\ci[(1.2.5)]{BS} used in 
\ci{BS} in the context of
 the nonlinear Schr\"odinger equation
(see Remark \re{rU1}). 
\medskip\\
III. We establish an  appropriate relativistic version (\ref{t-as3}) 
of $L^1\to L^\infty$ estimates.\\
Both estimates (\re{jv1}) and (\ref{t-as3}) play crucial role in
obtaining the bounds for the majorants.
\medskip\\
IV. Finally, we give the complete proof of the soliton asymptotics
(\ref{Si}).
In the context of the Schr\"odinger equation,
the proof of the corresponding asymptotics were sketched in \ci{BS}.
\smallskip\\

Our paper is organized as follows. In Section \ref{main-res} we formulate the  main
theorem. In Section \ref{Symproj} we introduce  the symplectic projection onto the
solitary manifold. The linearized equation is defined in Section \ref{lin-sec}.
In Section \ref{Sdec} we split the dynamics in two components: along the solitary
manifold and in the transversal directions. In Section \ref{modsec} the modulation
equations for the parameters of the soliton are displayed. The time decay of the
transversal component is established in Sections \ref{Trans-dec}-\ref{tr-dec}.
Finally, in Section \ref{sol-as} we obtain the soliton asymptotics (\ref{Si}).

\setcounter{equation}{0}
\section{Main results}
\la{main-res}

\subsection{Existence of dynamics}
We consider the Cauchy problem for the Hamilton system (\re{eq})
which we write as
\be\la{Eq}
\dot Y(t)={\cal F}(Y(t)),\quad t\in\R:\quad Y(0)=Y_0.
\ee
Here $Y(t)=(\psi(t), \pi(t))$, $Y_0=(\psi_0, \pi_0)$, and all
derivatives are understood in the sense of distributions.
To formulate our results precisely, let us first  introduce a suitable
phase space for the Cauchy problem (\ref{Eq}).

\begin{definition}\la{def-E}
i) $E_{\al}:=H^1_{\al}\oplus L^2_{\al}$ is the space  of the
states $Y=(\psi,\pi)$ with finite norm
\be\la{nEal}
\Vert \,Y\Vert_{E_{\al}}=
\Vert \psi \Vert_{H^1_\al} +\Vert\pi \Vert_{L^2_\al}<\infty
\ee
ii) The phase space  ${\cal E}:={\cS}+E$, where  $E=E_0$ and
$\cS$ is defined in (\ref{soman}). The metric in ${\cal E}$ is defined as
\be\la{enm}
\rho_{\cal E} (Y_1, Y_2)=\Vert  Y_1-Y_2 \Vert_{E},\quad Y_1,Y_2\in{\cal E}
\ee
iii) $W:=W^{2,1}_0\oplus W^{1,1}_0$ is the space of the states $Y=(\psi,\pi)$
with the finite norm
\be\la{nW}
\Vert \,Y\Vert_{W}=
\Vert \psi \Vert_{W^{2,1}_0} +\Vert\pi \Vert_{W^{1,1}_0}<\infty
\ee
\end{definition}
Obviously, the Hamilton functional (\re{Ham})
is continuous on the phase space ${\cal E}$.
The existence and uniqueness of the solutions to the Cauchy problem
(\ref{Eq}) follows by methods  \cite {Li,Re,St78}:
\begin{pro}
i) For any initial data $Y_0\in{\cal E}$ there exists the unique solution
$Y(t)\in C(\R,\cal E)$ to the problem (\ref{Eq}).\\
(ii) For every $t\in\R$, the map $U(t): Y_0\mapsto Y(t)$ is continuous in
${\cal E}$.\\
(iii)
The  energy is conserved, i.e.
\be\la{E}
{\cal H}(Y(t))= {\cal H}(Y_0),\,\,\,\,\,t\in\R
\ee
\end{pro}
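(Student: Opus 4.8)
The plan is to treat (\ref{Eq}) as a semilinear perturbation of the free Klein--Gordon flow about the static kink $S_{0,0}=(s,0)\in\cS$, which is a stationary solution of (\ref{Eq}) (the second component of $\cF(s,0)$ is $s''+F(s)=0$ by (\ref{steq})). Writing $Y(t)=S_{0,0}+X(t)$ with $X=(\phi,\pi)\in E$ --- admissible since $\cE=\cS+E$ does not depend on which element of $\cS$ is used as base point, any two solitons differing by a smooth, exponentially decaying function hence by an element of $E$ --- equation (\ref{Eq}) becomes
\be
\dot X(t)=L_0 X(t)+R(X(t)),\qquad R(\phi,\pi)=\big(0,\;F(s+\phi)-F(s)+m^2\phi\big),
\ee
where $L_0$ is the generator of the free Klein--Gordon group $W_0(t)$ corresponding to $\ddot\phi=\Delta\phi-m^2\phi$, with operator norm on $E=H^1\oplus L^2$ uniformly bounded in $t$ since $m>0$. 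By the one--dimensional embedding $H^1(\R)\subset L^\infty(\R)$ and the smoothness of $U$, the map $R$ is locally Lipschitz $E\to E$ with $R(0)=0$; applying the contraction mapping principle to the Duhamel equation $X(t)=W_0(t)X_0+\int_0^t W_0(t-\tau)R(X(\tau))\,d\tau$ in $C([-T,T],E)$ yields a unique local solution on an interval of length $T=T(\Vert X_0\Vert_E)$ together with a Lipschitz bound $\sup_{|t|\le T}\Vert X(t)-X'(t)\Vert_E\le C\Vert X_0-X_0'\Vert_E$ for data in a fixed ball. This gives (i) locally and the local part of (ii).

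Next one checks that $\cH$ is finite and continuous on $\cE$: on compacts $U(\psi)$ is bounded, while near $\pm\infty$, $\psi\mp a=(s\mp a)+\phi\in L^2$ and $U(\psi)=\tfrac{m^2}{2}(\psi\mp a)^2+\cO(|\psi\mp a|^{14})$ by (\ref{U11}). For smooth, rapidly decaying data the local solution is classical and $\frac{d}{dt}\cH(Y(t))\equiv0$ by integration by parts. To rule out finite--time blow-up I would combine this with a coercivity estimate coming from Condition {\bf U1}: since $U>0$ on $(-a,a)$ and $U(\psi)=\tfrac{m^2}{2}(\psi\mp a)^2+\cO(\cdot)$ near $\pm a$, one has $U\ge0$ on a fixed interval $(-a-\delta,a+\delta)$, and $U\ge-C_0$ on $\R$ by (\ref{infU}); as $|s|<a$ pointwise, $U(\psi(x,t))<0$ forces $|\phi(x,t)|\ge\delta$, so $|\{x:U(\psi(x,t))<0\}|\le\delta^{-2}\Vert\phi(t)\Vert_{L^2}^2$ and hence
\be
\tfrac12\Vert\pi(t)\Vert_{L^2}^2+\tfrac12\Vert\psi'(t)\Vert_{L^2}^2=\cH(Y_0)-\int_\R U(\psi(x,t))\,dx\le\cH(Y_0)+C_0\delta^{-2}\Vert\phi(t)\Vert_{L^2}^2 .
\ee
Since $\dot\phi=\pi$, $\frac{d}{dt}\Vert\phi(t)\Vert_{L^2}^2=2\int_\R\phi\pi\,dx\le\Vert\phi(t)\Vert_{L^2}^2+\Vert\pi(t)\Vert_{L^2}^2$, so inserting the displayed bound gives $\frac{d}{dt}\Vert\phi(t)\Vert_{L^2}^2\le C(1+\Vert\phi(t)\Vert_{L^2}^2)$; thus $\Vert\phi(t)\Vert_{L^2}$ --- and then, via the displayed inequality and $\Vert\phi'(t)\Vert_{L^2}\le\Vert\psi'(t)\Vert_{L^2}+\Vert s'\Vert_{L^2}$, the whole norm $\Vert X(t)\Vert_E$ --- grows at most exponentially, hence stays finite on every bounded interval. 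So the smooth solution is global and conserves $\cH$.

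For general $X_0\in\cE$ one approximates in $\cE$ by smooth rapidly decaying data $X_0^{(n)}$; the a priori bound, depending only on the convergent (hence bounded) quantities $\cH(Y_0^{(n)})$ and $\Vert\phi_0^{(n)}\Vert_{L^2}$, confines the solutions $X^{(n)}(t)$ to a fixed $E$-ball on each bounded interval, so iterating the local Lipschitz estimate over a partition of $[0,t]$ shows that $X^{(n)}(t)$ converges on all of $\R$ to the local solution of (i), which is therefore global; continuity of $\cH$ then yields (\ref{E}), and running the same iteration with two sequences of data gives the continuity (ii). The one genuinely delicate point is the a priori bound: because $U$ is merely bounded below, not coercive, away from $\pm a$, energy conservation alone does not control $\Vert\phi(t)\Vert_E$, and one must feed the crude Gronwall bound on $\Vert\phi(t)\Vert_{L^2}$ into the coercivity identity to close the estimate. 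All remaining steps follow the classical scheme for semilinear wave equations, cf. \cite{Li,Re,St78}.
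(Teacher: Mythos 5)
The paper offers no proof of this proposition; it simply invokes the classical theory of \cite{Li,Re,St78}, so there is no specific argument to compare against. Your proof is correct and is essentially what that classical semilinear scheme yields here: the decomposition about the static kink $S_{0,0}$ is legitimate because any two elements of $\cS$ differ by an element of $E$, the local Lipschitz property of $R$ on $E$ uses the one--dimensional embedding $H^1\subset L^\infty$ so that $s+\phi$ stays in a compact interval where $U''$ is bounded, and the Duhamel/contraction step is standard. You also correctly identify and resolve the one genuine subtlety: since $U$ is only bounded below (not coercive), energy conservation does not directly bound $\Vert\phi(t)\Vert_{L^2}$; the Chebyshev bound $|\{U(\psi)<0\}|\le\delta^{-2}\Vert\phi\Vert_{L^2}^2$ (valid because $U\ge0$ on a $\delta$--neighborhood of $[-a,a]$ and $|s|<a$) fed into $\frac{d}{dt}\Vert\phi\Vert_{L^2}^2\le\Vert\phi\Vert_{L^2}^2+\Vert\pi\Vert_{L^2}^2$ closes the Gronwall loop. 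The density/limiting argument for general data in $\cE$ and for parts (ii)--(iii) is also sound.
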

\subsection{Solitary manifold and main result}
Let us consider the solitons (\ref{sol}).
The substitution to (\re{eq}) gives the following stationary equations,
\be\la{stfch}
\left.\ba{rclrcl}
-v\psi'_v(y)&=&\pi_v(y)~,\\\\
-v\pi'_v(y)&=&\psi''_v(y)+F(\psi_v(y))
\ea\right|
\ee
\begin{definition}\la{defsol}
A soliton state is $S(\si):=(\psi_v(x-b),\pi_v(x-b))$, where
$\si:=(b,v)$ with $b\in\R$ and $v\in (-1,1)$.
\end{definition}
Obviously, the soliton solution (\ref{sosol}) admits the representation $S(\si(t))$,
 where
\be\la{sigma}
\si(t)=(b(t),v(t))=(vt+q,v).
\ee
\begin{definition}
A solitary manifold is the set ${\cal S}:=\{S(\si):\si\in\Sigma:=
\R\times (-1,1)$.
\end{definition}
The main result of our paper is the following theorem
\begin{theorem}\la{main}
Let  the conditions {\bf U1} and 
{\bf U2} hold, and
$Y(t)$
be the solution to the Cauchy problem  (\ref{Eq}) with an
initial state $Y_0\in{\cal E}$ which is close to a kink
$S(\si_0)=S_{q_0,v_0}$:
\be\la{close}
  Y_0=S(\si_0)+X_0,\,\,\,\,d_0:=\Vert X_0\Vert_{E_\beta\cap W}\ll 1
\ee
where $\beta >5/2$.
Then for $d_0$ sufficiently small the solution admits the
asymptotics:
\be\la{S}
Y(x,t)=
(\psi_{v_{\pm}}(x-v_{\pm}t-q_{\pm}), \pi_{v_{\pm}}(x-v_{\pm}t-q_{\pm}))
+W_0(t)\Phi_\pm+r_\pm(x,t),\quad t\to\pm\infty
\ee
where $v_{\pm}$ and $q_{\pm}$ are  constants,
$\Phi_\pm\in E$, and $W_0(t)$ is the dynamical group of the
free Klein-Gordon equation , while
\be\la{rm}
\Vert r_\pm(t)\Vert_{E}=\cO(|t|^{-1/2})
\ee
\end{theorem}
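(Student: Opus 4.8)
The plan is to follow the by now standard ``linearization plus modulation'' strategy, adapted to the relativistic setting. First I would use the symplectic (non-orthogonal) projection onto the solitary manifold $\cS$ to decompose the solution, for as long as it stays in a tubular neighborhood of $\cS$, as $Y(t)=S(\si(t))+Z(t)$ where $Z(t)$ lies in the symplectically orthogonal complement of the tangent space $T_{S(\si(t))}\cS$. The two-dimensional freedom in $\si(t)=(b(t),v(t))$ is exactly enough to kill the two-dimensional zero-eigenspace (generalized kernel) of the linearized operator $A_v$, which by Condition \textbf{U2}(i) is all of the unstable/neutral discrete spectrum. Differentiating the constraint gives the \emph{modulation equations} for $\dot b$ and $\dot v$; these have the schematic form $\dot b-v=\cO(\|Z\|^2)+\cO(\|Z\|)\,|\dot v|$ and $\dot v=\cO(\|Z\|^2)+\cdots$, so the soliton parameters move only at second order in the transversal component, provided $Z$ stays small.

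Next I would write the equation for $Z(t)$ as $\dot Z=A_{v(t)}Z+N(t)$, where $N$ collects the nonlinear remainder of the Taylor expansion of $F$ around the kink plus the terms generated by the time-dependence of $v(t)$ and by the modulation; by Condition \textbf{U1}, in particular the flatness \eqref{U11}, the potential $V_v$ decays like $e^{-12m\ga|x|}$ and the nonlinearity is genuinely higher order, so $\|N(t)\|$ in the appropriate weighted norm is controlled by a high power of $\|Z(t)\|$ together with $|\dot v|$. The core analytic input is the time decay of the linearized flow $e^{A_{v}t}$ \emph{restricted to the continuous spectral subspace}: the weighted-energy decay $\cO(|t|^{-3/2})$ coming from our Klein-Gordon results \cite{KK10,1dkg} (this is Theorem \re{1d} referenced in the excerpt), which in turn rests on Condition \textbf{U2}(ii) guaranteeing the absence of a resonance at the threshold $m^2$, hence boundedness of the resolvent of $A_v$ at the edges $\pm im/\ga$. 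Using Duhamel's formula on the transversal equation, together with the relativistic ``virial type'' bound \eqref{jv1} and the relativistic $L^1\to L^\infty$ estimate \eqref{t-as3}, one sets up a system of integral inequalities for the majorants $M(t):=\sup_{s\le t}(1+s)^{3/2}\|Z(s)\|_{-\beta}$ and the energy-norm majorant; the method of majorants then yields that these remain bounded for all time once $d_0$ is small, which in particular traps the solution in the neighborhood of $\cS$ (so the decomposition never breaks down) and shows $\|Z(t)\|_E=\cO(|t|^{-1/2})$.

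Finally, once $\dot v(t)=\cO((1+|t|)^{-3})$ (integrable) is established, $v(t)$ converges to limits $v_\pm$ and $b(t)-v(t)t$ to limits $q_\pm$ as $t\to\pm\infty$, at the required rates. For the dispersive part, I would compare $Z(t)$ with the free Klein-Gordon flow: writing $Z(t)=W_0(t)\Phi(t)+(\text{error})$ via Cook's method, the decay of $N$ and of the difference $A_v-A_0$ in weighted norms makes $\Phi(t)$ Cauchy, with limits $\Phi_\pm\in E$; collecting the changes of Lorentz frame between $v(t)$ and $v_\pm$, the remainder $r_\pm(t):=Y(t)-S_{q_\pm,v_\pm}(\cdot-v_\pm t)-W_0(t)\Phi_\pm$ is then bounded in $E$ by the transversal decay plus the parameter-convergence rate, giving \eqref{rm}.

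The main obstacle, and the real novelty, is closing the majorant bootstrap against the slow $t^{-1/2}$ dispersive decay intrinsic to the 1D Klein-Gordon equation: this is precisely what blocked the relativistic case until now (cf.\ the discussion in \cite{Cu}). One must exploit the flatness \eqref{U11} to gain enough powers of $\|Z\|$ in the nonlinear terms, and combine the weighted $t^{-3/2}$ decay on the continuous subspace with the virial estimate \eqref{jv1} and the $L^1\to L^\infty$ bound \eqref{t-as3} so that the nonlinear feedback in the integral inequalities is subcritical; verifying that these relativistic estimates hold with constants uniform in $v$ over compact subintervals of $(-1,1)$, and that the time-dependence of the frame $v(t)$ does not destroy them, is the delicate part of the argument.
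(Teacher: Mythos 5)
Your proposal follows essentially the same architecture as the paper's proof: symplectic projection onto $\cS$ with modulation equations making $\dot v$ and $\dot b - v$ quadratic in the transversal component, linearization into $A_{v,w}$, the weighted $t^{-3/2}$ decay from Theorem~\re{1d}, the $L^1\to L^\infty$ bound (\re{t-as3}), the virial estimate (\re{jv1}), majorants, and Duhamel/Cook extraction of the free Klein--Gordon wave. One slip worth flagging: you claim the majorant bootstrap gives $\Vert Z(t)\Vert_E=\cO(|t|^{-1/2})$, but $Z$ (the paper's $X$) does not decay in the unweighted energy norm --- it equals $W_0(t)\Phi_+ + r_+(t)$ and the free dispersive wave carries fixed energy; what the bootstrap actually controls is $\Vert X(t)\Vert_{E_{-\beta}}=\cO(t^{-3/2})$ and $\Vert\Psi(t)\Vert_{L^\infty}=\cO(t^{-1/2})$, and only the remainder $r_\pm$ (after subtracting the free wave you construct afterwards) decays in $E$. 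Also, your sketch writes the linearized equation with $A_{v(t)}$ and acknowledges the non-autonomy as ``delicate,'' but it omits the device the paper actually uses to handle it: the linearization is $A_{v(t),\dot b(t)}$, and one must freeze at $v(t_1)$ and change to the moving coordinate $y_1=y+d_1(t)$ to cancel the drift $(\dot b-v(t_1))\nabla$ before the autonomous decay estimates can be applied and the error absorbed by the majorant argument.
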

It suffices to  prove the asymptotics  (\re{S}) for $t\to+\infty$
since the system (\re{eq}) is time reversible.

\setcounter{equation}{0}
\section{Symplectic projection}
\la{Symproj}
\subsection{Symplectic structure and hamiltonian form}
The system (\re{Eq}) reads as the Hamilton system
\be\la{ham}
\dot Y=J{\cal D}{\cal H}(Y),\,\,\,J:=\left(
\ba{cc}
0 & 1 \\
-1 & 0
\ea
\right),\,\,Y=(\psi,\pi)\in E,
\ee
where ${\cal D}{\cal H}$ is the Fr\'echet derivative of
the Hamilton functional (\re{Ham}).
Let us identify the tangent space of $E$, at every point,
with the space $E$. Consider the symplectic form $\Om$ on $E$ defined  by
\be\la{OmJ}
\Om(Y_1,Y_2)=\langle Y_1,JY_2\rangle,\,\,\,Y_1,Y_2\in E,
\ee
where
$$
\langle Y_1,Y_2\rangle:=\langle\psi_1,\psi_2\rangle+\langle\pi_1,\pi_2\rangle
$$
and $\langle\psi_1,\psi_2\rangle=\ds\int\psi_1(x)\psi_2(x)dx$ etc.
It is clear that the form $\Om$ is non-degenerate, i.e.
$$
\Om(Y_1,Y_2)=0\,\,~\mbox{\rm for every}~~ \,Y_2\in E \,\,\Longrightarrow\,\, Y_1=0.
$$
\begin{definition}
i) The symbol $Y_1\nmid Y_2$ means that $Y_1\in E$,
$Y_2\in E$,
and $Y_1$ is symplectic orthogonal to $Y_2$, i.e. $\Om(Y_1,Y_2)=0$.

ii) A projection operator $\bP: E\to E$
is said to be symplectic
 orthogonal if $Y_1\nmid Y_2$ for $Y_1\in\mbox{\rm Ker}\,\bP$ and
$Y_2\in\mbox {\rm Range}\,\bP$.
\end{definition}

\subsection{Symplectic projection onto solitary manifold}

Let us consider the tangent space ${\cal T}_{S(\si)}{\cal S}$ of
the manifold ${\cal S}$ at a point $S(\si)$. The vectors
\be\la{inb}
\left.\ba{rclrrrrcrcl}
\tau_1=\tau_1(v)&:=&\pa_{b}S(\si)=
(&\!\!\!-\psi'_v(y)&\!\!\!\!,&\!\!\!\!-\pi'_v(y))
\\
\tau_2=\tau_2(v)&:=&\pa_{v}S(\si)=(&\!\!\!\!\pa_{v}\psi_v(y)
&\!\!\!\!,&\!\!\!\!\pa_{v}\pi_v(y))
\ea\right.
\ee
form a basis in ${\cal T}_{S(\si)}{\cal S}$.
Here  $y:=x-b$ is the ``moving frame coordinate''.
Let us stress that the functions $\tau_j$
are always regarded as functions of $y$ rather than those of $x$.
Formula (\re{sol})  implies that
\be\la{tana}
\tau_j(v)\in E_\al, ~~~~v\in (-1,1), ~~~~j=1,2,~~~~~\forall\al\in\R.
\ee
\begin{lemma}\la{Ome}
The symplectic form $\Omega$ is nondegenerate on
the tangent space ${\cal T}_{S(\si)}{\cal S}$
i.e. ${\cal T}_{S(\si)}{\cal S}$ is a symplectic subspace.
\end{lemma}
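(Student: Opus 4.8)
The plan is to compute the $2\times 2$ Gram matrix of the symplectic form $\Omega$ in the basis $\{\tau_1,\tau_2\}$ of $\mathcal{T}_{S(\sigma)}\mathcal{S}$ and show it is invertible. Since $\Omega(Y_1,Y_2)=\langle Y_1,JY_2\rangle$ with $J=\left(\begin{smallmatrix}0&1\\-1&0\end{smallmatrix}\right)$, for $Y_i=(\psi_i,\pi_i)$ we have $\Omega(Y_1,Y_2)=\langle\psi_1,\pi_2\rangle-\langle\pi_1,\psi_2\rangle$. The form is antisymmetric, so the Gram matrix has the shape $\left(\begin{smallmatrix}0&c\\-c&0\end{smallmatrix}\right)$ with $c=\Omega(\tau_1,\tau_2)$, and nondegeneracy on the tangent space is equivalent to $c\neq 0$. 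Note all the pairings converge because $\tau_j(v)\in E_\alpha$ for every $\alpha$ by (\ref{tana}).

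First I would write out $c=\Omega(\tau_1,\tau_2)$ explicitly using (\ref{inb}). With $\tau_1=(-\psi'_v,-\pi'_v)$ and $\tau_2=(\partial_v\psi_v,\partial_v\pi_v)$, one gets
\be
c=\Omega(\tau_1,\tau_2)=-\langle\psi'_v,\partial_v\pi_v\rangle+\langle\pi'_v,\partial_v\psi_v\rangle .
\ee
Now I would use the relations (\ref{sol}): $\psi_v(x)=s(\gamma x)$ and $\pi_v(x)=-v\psi'_v(x)$, together with $\gamma=1/\sqrt{1-v^2}$, so $d\gamma/dv=v\gamma^3$. A direct substitution of $\pi_v=-v\psi'_v$ into $c$, integrating by parts the terms with $x$-derivatives and collecting, reduces $c$ to a positive multiple of $\int_\R (\psi'_v(x))^2\,dx$ (the momentum-type integral), or equivalently — after the Lorentz change of variable $T_v$ of (\ref{THT}) — to a nonzero constant times $\int_\R (s'(y))^2\,dy$ times a rational function of $v$ that stays nonzero on $(-1,1)$. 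Since $s'>0$ everywhere by (\ref{kinkmon}) and $s'\in L^2$ (the kink has finite energy), this integral is strictly positive, hence $c\neq 0$.

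The computation is essentially the standard one showing that on the solitary manifold the symplectic form restricts nondegenerately because the pairing of the translation direction with the boost direction is (up to sign) the conserved momentum, which is nonzero for a genuinely moving and also for a static kink with nontrivial profile. The only mild subtlety — and the one place I would be careful — is bookkeeping with the $v$-dependence of $\gamma$ when differentiating $\psi_v(x)=s(\gamma x)$ in $v$, since $\partial_v\psi_v(x)=v\gamma^3 x\,s'(\gamma x)$ grows linearly in $x$; but this is harmless against the exponential decay of the second component through $s'$ and its derivatives (recall (\ref{s-decay})), so all integrals converge absolutely and the integration by parts produces no boundary terms. Thus $\Omega$ is nondegenerate on $\mathcal{T}_{S(\sigma)}\mathcal{S}$, which is the assertion.
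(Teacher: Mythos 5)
Your proposal is correct and takes essentially the same route as the paper: compute $\Omega(\tau_1,\tau_2)$ explicitly from (\ref{inb}), (\ref{sol}), and reduce it to a positive multiple of $\int (s')^2$, which is nonzero by (\ref{kinkmon}). The paper substitutes $\tau_1,\tau_2$ in terms of $s$, $s'$, $s''$ directly and the cross terms cancel by inspection (giving $\gamma^4\langle s'(\gamma\cdot),s'(\gamma\cdot)\rangle$), whereas you plan one integration by parts to achieve the same cancellation; both yield $\gamma^3\|s'\|_{L^2}^2>0$.
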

\begin{proof}
Let us compute the vectors $\tau_1$ and $\tau_2$. Recall that
$\psi_v(y)=s(\gamma y)$ and
$\pi_v=-v\psi'_v(y)=-v\gamma s'(\gamma y)$ with $\gamma=1/\sqrt{1-v^2}$. Then
$$
\tau_1=(\tau_1^1,\tau_1^2)=\Big(-\gamma s'(\gamma y),
~v\gamma^2s''(\gamma y)\Big)
$$
$$
\tau_2=(\tau_2^1,\tau_2^2)=\Big(vy\gamma^3s'(\gamma y),
~-\gamma^3s'(\gamma y)
-v^2y\gamma^4s''(\gamma y)\Big)
$$
Therefore
\be\la{symp}
   \Om(\tau_1,\tau_2)=\langle\tau_{1}^1,\tau_{2}^2\rangle
   -\langle\tau_{1}^2,\tau_{2}^1\rangle=\gamma^4
   \langle s'(\gamma y),s'(\gamma y)\rangle>0
\ee
\end{proof}
Now we show that in a small neighborhood of the soliton
manifold ${\cal S}$ a ``symplectic orthogonal projection''
onto ${\cal S}$ is well-defined. Let us introduce the translations
$T_q:(\psi(x),\pi(x))\mapsto (\psi(x-q),\pi(x-q))$, $q\in\R$.
Note that the manifold ${\cal S}$ is invariant with respect to the
translations.
\begin{definition}
For any  $\ov v<1$ denote by $\Si(\ov v)=\{\si=(b,v):
b\in\R, |v|\le \ov v \}$.
\end{definition}
Let us note that ${\cal S}\in E_\al$ with $\al<-1/2$.
\begin{lemma}\la{skewpro}
Let  $\al<-1/2$ and $\ov v<1$. Then\\
i) there exists a neighborhood ${\cal O}_\al({\cal S})$ of ${\cal S}$ in $E_\al$
and a mapping ${\bf\Pi}:{\cal O}_\al({\cal S})\to{\cal S}$  such that ${\bf\Pi}$
is uniformly continuous on ${\cal O}_\al({\cal S})$ in the metric of $E_\al$,
\be\la{comm}
{\bf\Pi} Y=Y~~\mbox{for}~~ Y\in{\cal S}, ~~~~~\mbox{and}~~~~~
Y-S \nmid \cT_S{\cal S},~~\mbox{where}~~S={\bf\Pi} Y.
\ee
ii) ${\cal O}_\al({\cal S})$ is invariant with respect to the translations
$T_q$, and
\be\la{commut}
{\bf\Pi} T_qY=T_q{\bf\Pi} Y,~~~~~\mbox{for}~~Y\in{\cal O}_\al({\cal S})
~~\mbox{and}~~q\in\R.
\ee
iii) For any $\ov v<1$ there exists an $r_\al(\ov v)>0$ s.t.
$S(\si)+X\in{\cal O}_\al({\cal S})$ if $\si\in\Si(\ov v)$
and $\Vert X\Vert_{E_\al}<r_\al(\ov v)$.
\end{lemma}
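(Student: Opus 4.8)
The plan is to construct $\mathbf{\Pi}$ by solving, for each $Y$ near $\mathcal{S}$, the system of two equations
\[
F_1(Y,\si):=\Om\big(Y-S(\si),\tau_1(v)\big)=0,\qquad
F_2(Y,\si):=\Om\big(Y-S(\si),\tau_2(v)\big)=0,
\]
for $\si=(b,v)\in\Si$, and then setting $\mathbf{\Pi}Y:=S(\si)$. These are exactly the conditions $Y-S\nmid\cT_S\mathcal{S}$ since $\tau_1(v),\tau_2(v)$ span $\cT_{S(\si)}\mathcal{S}$. First I would check that $F=(F_1,F_2):E_\al\times\Si\to\R^2$ is well-defined and $C^1$: the pairing $\Om(Y-S(\si),\tau_j(v))=\langle Y-S(\si),J\tau_j(v)\rangle$ makes sense because $Y-S(\si)\in E_\al$ with $\al<-1/2$ while $J\tau_j(v)\in E_{-\al}$ by (\ref{tana}), so the duality pairing $E_\al\times E_{-\al}\to\R$ is continuous; smoothness in $\si$ follows since $\si\mapsto S(\si)$ and $\si\mapsto\tau_j(v)$ are smooth into $E_{-\al}$ (the kink $s$ is smooth and decays exponentially together with all derivatives by (\ref{s-decay})). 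Note $F(S(\si_*),\si_*)=0$ for any base point $\si_*$.

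Next I would apply the implicit function theorem at a point $(S(\si_*),\si_*)$. The Jacobian $\pa_\si F$ at that point has entries $-\Om(\pa_b S(\si_*),\tau_j(v_*))=-\Om(\tau_1,\tau_j)$ in the $b$-column and $-\Om(\pa_v S(\si_*),\tau_j(v_*))=-\Om(\tau_2,\tau_j)$ in the $v$-column, i.e. $\pa_\si F|_{(S(\si_*),\si_*)}=-G(v_*)$ where $G(v)$ is the Gram matrix $G_{jk}=\Om(\tau_j(v),\tau_k(v))$. By Lemma~\ref{Ome} (specifically by the antisymmetry of $\Om$ and the computation (\ref{symp})), $G(v)$ is the skew matrix with off-diagonal entry $\Om(\tau_1,\tau_2)=\ga^4\langle s'(\ga y),s'(\ga y)\rangle>0$, hence invertible with uniformly bounded inverse for $|v_*|\le\ov v<1$. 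Therefore the IFT yields, for each $\si_*$, a neighborhood of $S(\si_*)$ in $E_\al$ and a unique $C^1$ solution $\si=\si(Y)$; define $\mathbf{\Pi}Y:=S(\si(Y))$. Uniform continuity of $\mathbf{\Pi}$ on a neighborhood $\cO_\al(\mathcal{S})$, and the uniform radius estimate in (iii), come from the fact that all the relevant bounds — the norm of $G(v)^{-1}$, the $E_{-\al}$-norms of $\tau_j(v)$ and their $v$-derivatives — are uniform for $|v|\le\ov v$; a standard quantitative version of the IFT then gives a radius $r_\al(\ov v)>0$ depending only on $\ov v$.

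For part (ii), translation equivariance, I would use that $T_q$ is a symplectic isometry of $E_\al$ commuting with the flow on $\mathcal{S}$ in the sense $T_q S(b,v)=S(b+q,v)$, and correspondingly $T_q\tau_j(v)$ equals $\tau_j(v)$ evaluated in the shifted frame; plugging $T_qY$ into the defining equations $F(T_qY,\si)=0$ and changing variables $b\mapsto b-q$ shows $\si(T_qY)=(b(Y)+q,v(Y))$, whence $\mathbf{\Pi}T_qY=S(b(Y)+q,v(Y))=T_q\mathbf{\Pi}Y$; this also shows $\cO_\al(\mathcal{S})$ can be taken $T_q$-invariant (replace it by $\bigcup_q T_q\cO_\al(\mathcal{S})$, which is still a neighborhood of $\mathcal{S}$ with the same uniform radius). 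The main obstacle I anticipate is purely functional-analytic bookkeeping rather than a deep difficulty: one must be careful that the symplectic pairing $\Om(Y-S,\tau_j)$ genuinely closes between $E_\al$ and $E_{-\al}$ and that the IFT is applied in a fixed Banach space (here $E_\al$, with $\si$ the finite-dimensional parameter), together with extracting the \emph{uniform} (in $\ov v$) neighborhood size — this is where the exponential decay of $s'$ and the lower bound (\ref{symp}) on $\Om(\tau_1,\tau_2)$ are essential, the latter degenerating only as $|v|\to1$, which is precisely why the statement restricts to $\Si(\ov v)$.
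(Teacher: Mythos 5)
Your proposal is correct and follows essentially the same route as the paper: both set up the symplectic orthogonality conditions $\Om(Y-S(\si),\tau_j)=0$, apply the finite-dimensional implicit function theorem in $\si$ at a base soliton, verify nondegeneracy of the $2\times2$ Jacobian via Lemma~\ref{Ome}, and obtain the translation-invariant neighborhood by taking a union over translates, with uniformity on $|v|\le\ov v$ coming from the $E_\al$-$E_{-\al}$ duality and the uniform lower bound on $\Om(\tau_1,\tau_2)$. You spell out the duality pairing and the Gram-matrix computation a bit more explicitly than the paper (which simply invokes ``compactness arguments'' for the uniformity), but the underlying argument is the same.
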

\begin{proof} We have to find $\si=\si(Y)$ such that $S(\si)={\bf\Pi} Y$ and
\be\la{ift}
\Om(Y-S(\si),\pa_{\si_j}S(\si))=0,~~~~~~ j=1,2.
\ee
Let us fix an arbitrary $\si^0\in\Sigma$ and note that
the system  (\re{ift}) involves two smooth scalar functions of $Y$.
Then for $Y$ close to $S(\si^0)$, the existence of $\si$
follows by the standard finite dimensional implicit function theorem if we
show that the $2\times 2$ Jacobian matrix with elements $M_{lj}(Y)=
\pa_{\si_l}\Om(Y-S(\si^0),\pa_{\si_j}S(\si^0))$ is non-degenerate
at $Y=S(\si^0)$. First note that all the derivatives exist by  (\re{tana}).
The non-degeneracy holds by Lemma \re{Ome} and the definition  (\re{inb})
since $M_{lj}(S(\si^0))=-\Om(\pa_{\si_l}S(\si^0),\pa_{\si_j}S(\si^0))$.
Thus, there exists some neighborhood ${\cal O}_\al(S(\si^0))$ of $S(\si^0)$
where $\bf\Pi$ is well defined and satisfies (\re{comm}), and the same is true
in the union ${\cal O}'_\al({\cal S})=\cup_{\si^0\in \Si} {\cal O}_\al(S(\si^0))$.
The identity (\re{commut}) holds for $Y,T_qY\in{\cal O}'_\al({\cal S})$,
since the form $\Om$ and the manifold $\cal S$ are invariant with respect to
the translations.
It remains to modify ${\cal O}'_\al({\cal S})$ by the translations: we set
${\cal O}_\al({\cal S})=\cup_{b\in\R}T_b{\cal O}'_\al({\cal S})$. Then the second
statement obviously holds.

The last two statements and the uniform continuity in the first statement
follow by translation invariance and the compactness arguments.
\end{proof}

\medskip

\noindent
We refer to $\bf\Pi$ as  symplectic orthogonal projection onto ${\cal S}$.
\setcounter{equation}{0}
\section{Linearization on the solitary manifold}
\la{lin-sec}
Let us consider a solution to the system (\re{eq}), and split it as the sum
\be\la{dec}
Y(t)=S(\si(t))+X(t)
\ee
where $\si(t)=(b(t),v(t))\in\Sigma$ is an arbitrary smooth function of $t\in\R$.
In detail, denote $Y=(\psi,\pi)$ and $X=(\Psi,\Pi)$.
Then (\re{dec}) means that
\be \la{add}
\left.
\ba{rclrcl}
\psi(x,t)&=&\psi_{v(t)}(x-b(t))+\Psi(x-b(t),t)\\
\pi(x,t)&=&\5\pi_{v(t)}(x-b(t))+\Pi(x-b(t),t)
\ea
\right|
\ee
Let us substitute (\re{add}) to (\re{eq}), and linearize the equations in $X$.
Setting $y=x-b(t)$ which is the ``moving frame coordinate'', we obtain that
\be\la{addeq}
\left.
\ba{rcl}
\dot\psi\!\!\!&=&\!\!\!\dot v\partial_v\psi_v(y)-\dot b\psi'_v(y)+
\dot\Psi(y,t)-\dot b\Psi'(y,t)=\pi_v(y)+\Pi(y,t)
\\\\
\dot\pi\!\!\!&=&\!\!\!\dot v\partial_v\pi_v(y)-\dot b \pi'_v(y)+
\dot\Pi(y,t)-\dot b\Pi'(y,t)
=\psi''_v(y)+\Psi''(y,t)+F(\psi_v(y)+\Psi(y,t))
\ea
\right|
\ee
Using the equations (\re{stfch}),  we obtain from (\re{addeq})
the following equations for the components of the vector $X(t)$:
\be\la{Phi}
\left.\ba{rcl}
\!\!\!\!\dot \Psi(y,t)\!\!\!\!&=&\!\!\!\!\Pi(y,t)+\dot b\Psi'(y,t)+
(\dot b-v)\psi'_v(y)-\dot v \partial_v\psi_v(y)
\\\\
\!\!\!\!\dot \Pi(y,t)\!\!\!\!&=&\!\!\!\!\Psi''(y,t)+\dot b\Pi'(y,t)
+(\dot b-v)\pi'_v(y)-\dot v\partial_v\pi_v(y)+F(\psi_v(y)+\Psi(y,t))
-F(\psi_v(y))
\ea\right|
\ee
We can write the equations (\re{Phi}) as
\be\la{lin}
\dot X(t)=A(t)X(t)+T(t)+{\cal N}(t),\,\,\,t\in\R
\ee
where $T(t)$ is the sum of terms which do not depend on $X$,
and ${\cal N}(t)$ is at least quadratic in $X$.
The linear operator $A(t)=A_{v,w}$ depends on two parameters, $v=v(t)$, and
$w=\dot b(t)$ and can be written in the form
\be\la{AA}
A_{v,w}\left(\ba{c}
\Psi \\ \Pi
\ea\right):=\left(\ba{cc}
w\nabla            & 1      \\
\Delta+ F'(\psi_v) & w\nabla
\ea
\right)\left(\ba{c}
\Psi \\ \Pi
\ea\right)=\left(\ba{cc}
w\nabla            &   1       \\
\Delta-m^2-V_v(y)  &   w\nabla
\ea\right)\left(\ba{c}
\Psi \\ \Pi
\ea\right)
\ee
where
\be\la{V}
V_v(y)= -F'(\psi_v)-m^2
\ee
Furthermore,   $T(t)$ and ${\cal N}(t)={\cal N}(\si,X)$ are given by
\be\la{TN}
T=\left(\ba{c}
(w-v)\psi'_v-\dot v\partial_v\psi_v\\
(w-v)\pi'_v-\dot v\partial_v\pi_v\\
\ea\right),\,\,\,\,
{\cal N}(\si,X)=\left(\ba{c}
0 \\ N(v,\Psi)
\ea\right)
\ee
where $v=v(t)$, $w=w(t)$, $\si=\si(t)=(b(t),v(t))$,  $X=X(t)$, and
\be\la{N2}
 N(v,\Psi)=F(\psi_v+\Psi)-F(\psi_v)-F'(\psi_v)\Psi
\ee
\begin{remark}\la{rT}
{\rm
i)
The term $A(t)X(t)$ in the right hand side of equation  (\re{lin})
is linear  in $X(t)$, and ${\cal N}(t)$ is a {\it high order term} in $X(t)$.
On the other hand, $T(t)$ is a zero order term which does not vanish
at $X(t)=0$ since $S(\si(t))$ generally is not a kink solution if
(\re{sigma}) fails to hold (though $S(\si(t))$ belongs to the solitary manifold).
\\
ii) Formulas (\re{inb}) and (\re{TN}) imply:
\be\la{Ttang}
T(t)=-(w-v)\tau_1-\dot v\tau_2
\ee
and hence $T(t)\in {\cal T}_{S(\si(t))}{\cal S}$, $t\in\R$.
This fact suggests an unstable character of the nonlinear dynamics
{\it along the solitary manifold}.}
\end{remark}
\subsection{Linearized equation}
Here we collect some Hamiltonian and spectral properties of the
operator $A_{v,w}$. First, let us consider the linear equation
\be\la{line}
\dot X(t)=A_{v,w}X(t),~~~~~~~t\in\R
\ee
with arbitrary fixed $v\in (-1,1)$ and $w\in \R$. Let us define
the space $E^+:=H^2(\R)\oplus H^1(\R)$.
\begin{lemma} \la{haml}
i) For any $v\in (-1,1)$ and $w\in \R$ equation (\re{line})
 can be represented as the Hamiltonian system  (cf. (\re{Ham})),
\be\la{lineh}
\dot X(t)=
JD{\cal H}_{v,w}(X(t)),~~~~~~~t\in\R
\ee
where $D{\cal H}_{v,w}$ is the Fr\'echet derivative of the
Hamiltonian functional
\be\la{H0}
{\cal H}_{v,w}(X)=\fr12\int\Big[|\Pi|^2+
|\Psi'|^2+(m^2+V_v)|\Psi|^2]dy+\int\Pi w\Psi' dy
\ee
ii) The energy conservation law holds
for the solutions $X(t)\in C^1(\R,E^+)$,
\be\la{enec}
\cH_{v,w}(X(t))=\const,~~~~~t\in\R
\ee
iii) The skew-symmetry relation holds,
\be\la{com}
\Omega(A_{v,w}X_1,X_2)=-\Omega(X_1,A_{v,w}X_2), ~~~~~~~~X_1,X_2\in E
\ee
\end{lemma}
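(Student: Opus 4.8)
All three assertions follow from one structural observation: by \re{AA} the operator $A_{v,w}$ has the form $A_{v,w}=JB$, where $B$ is precisely the operator generated by the \emph{quadratic} Hamiltonian $\cH_{v,w}$ of \re{H0}. The plan for (i) is therefore to compute the Fr\'echet derivative of \re{H0} directly. By Condition \textbf{U1} the function $V_v$ is smooth and bounded (in fact exponentially decaying, cf.\ \re{V-decay}), so the quadratic form \re{H0} is continuous on $E=H^1\oplus L^2$ and $\cH_{v,w}$ is smooth on $E^+=H^2\oplus H^1$. Varying $\Psi$ and $\Pi$ separately and integrating by parts (legitimate for $X\in E^+$) yields
\[
D\cH_{v,w}(X)=\bigl(-\Psi''+(m^2+V_v)\Psi-w\Pi'\,,\ \Pi+w\Psi'\bigr)=:BX,
\]
and left multiplication by $J$ reproduces term by term the two components of $A_{v,w}X$ in \re{AA}; this is exactly \re{lineh}.

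For (ii) I would differentiate $t\mapsto\cH_{v,w}(X(t))$ along a solution $X(t)\in C^1(\R,E^+)$. By the chain rule and (i), $\fr{d}{dt}\cH_{v,w}(X(t))=\langle D\cH_{v,w}(X(t)),\dot X(t)\rangle=\langle BX(t),JBX(t)\rangle$, which vanishes since $\langle u,Ju\rangle=0$ for every real $u$; here $BX(t)\in L^2\oplus H^1$ and $\dot X(t)\in E^+$, so all pairings are finite. This gives \re{enec}.

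For (iii) I would first check that $B$ is symmetric on $E$ with domain $E^+$: its diagonal block $-\Delta+m^2+V_v$ is self-adjoint and the off-diagonal blocks $\pm w\nabla$ are skew-adjoint, so one integration by parts gives $\langle BX_1,X_2\rangle=\langle X_1,BX_2\rangle$. Then, using that $J$ is orthogonal ($\langle Ju,Jv\rangle=\langle u,v\rangle$) and $J^2=-I$,
\[
\Omega(A_{v,w}X_1,X_2)=\langle JBX_1,JX_2\rangle=\langle BX_1,X_2\rangle=\langle X_1,BX_2\rangle=-\langle X_1,J^2BX_2\rangle=-\Omega(X_1,A_{v,w}X_2),
\]
which is \re{com}. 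Equivalently, \re{com} is the general fact that the symplectic gradient of a quadratic Hamiltonian is $\Omega$-skew-symmetric, hence already a formal consequence of (i).

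There is no genuine obstacle in this lemma. The only points requiring a little care are the integrations by parts — which is precisely why the statement is phrased on the smoother space $E^+=H^2\oplus H^1$ rather than on $E$ — and the boundedness of $V_v$ needed to make $\cH_{v,w}$ honestly $C^1$ on $E^+$; both are guaranteed by Condition \textbf{U1}. A fully written proof would, for (ii), also recall that $A_{v,w}$ generates a strongly continuous group on $E$ and on $E^+$ so that solutions with the stated regularity exist, but that is a standard perturbation-of-semigroups argument independent of the present statement.
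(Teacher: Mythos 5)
Your proof is correct and takes essentially the same approach as the paper: in both, part (i) is verified by directly computing the Fr\'echet derivative of $\cH_{v,w}$ and multiplying by $J$, part (ii) follows from the chain rule plus skew-symmetry of $J$, and part (iii) follows from the fact that $A_{v,w}=JB$ with $B$ symmetric (the paper phrases this as ``the Fr\'echet derivative of a real quadratic form is symmetric,'' while you make the integration by parts and the $J$-algebra explicit). The only difference is a matter of detail, not of method.
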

\begin{proof}
 i) The equation (\re{line}) reads as follows,
\be\la{eql}
\fr{d}{dt}\left(
\ba{c}
\Psi \\ \Pi
\ea
\right)=\left(
\ba{l}
\Pi+w\Psi' \\
\Psi''-(m^2+V_v)\Psi+w\Pi'
\ea
\right)
\ee
The  equations correspond to the Hamilton form since
$$
\Pi+w\Psi'=D_\Pi{\cal H}_{v,w},\,\,\, \Psi''-
(m^2+V_v)\Psi+w\Pi'=-D_\Psi{\cal H}_{v,w}
$$
ii) The energy conservation law
follows by  (\re{lineh}) and the chain rule for the
Fr\'echet derivatives:
\be\la{crF}
\ds\fr d{dt}\cH_{v,w}
(X(t))=\langle D\cH_{v,w}(X(t)),\dot X(t)\rangle=
\langle D\cH_{v,w}(X(t)),J D\cH_{v,w}(X(t))\rangle=0,~~~~~~t\in\R
\ee
since the operator $J$ is skew-symmetric by (\re{ham}), and
$D\cH_{v,w}(X(t))\in E$ for  $X(t)\in E^+$.
\\
iii) The skew-symmetry
holds
since $A_{v,w}X=JD\cH_{v,w}(X)$,
and the linear operator
$X\mapsto D\cH_{v,w}(X)$
is symmetric as the Fr\'echet derivative of a real quadratic form.
\end{proof}
\begin{lemma} \la{ljf}
The operator $A_{v,w}$ acts on the
tangent vectors $\tau=\tau_j(v)$ to the solitary manifold
as follows,
\be\la{Atan}
A_{v,w}[\tau_1]=(v-w)\tau'_1,\,\,\,A_{v,w}[\tau_2]=(w-v)\tau'_2+\tau_1
\ee
\end{lemma}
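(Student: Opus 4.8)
The plan is to verify the two identities in (\re{Atan}) by direct computation, exploiting the fact that the $\tau_j$ are built from the static soliton profile $\psi_v$, $\pi_v$, which satisfy the stationary equations (\re{stfch}). First I would recall from (\re{inb}) that $\tau_1=\pa_b S(\si)=(-\psi'_v,-\pi'_v)$ and $\tau_2=\pa_v S(\si)=(\pa_v\psi_v,\pa_v\pi_v)$, all regarded as functions of $y$. The key observation is that the solitary solutions $Y_{q,v}(t)=S(\si(t))$ with $\si(t)=(vt+q,v)$ solve the \emph{nonlinear} equation (\re{eq}); differentiating this exact solution family with respect to the parameters $q$ and $v$ produces tangent vectors that are acted on by the linearization in a controlled way. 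More precisely, the variational equation along $Y_{q,v}$ is governed by $A_{v,v}$ (the operator $A_{v,w}$ with $w=v$, since along the true soliton $\dot b=v$), so it is cleanest to first compute $A_{v,v}$ on $\tau_1$ and $\tau_2$ and then add the correction coming from $w\neq v$.

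The step for $\tau_1$: since $S(\si(t))$ with $b(t)=vt+q$ solves $\dot Y=\cF(Y)$ and $\pa_t S(\si(t))=v\,\pa_b S(\si)=-v\tau_1$ while simultaneously $\pa_t S(\si(t))$ equals the right-hand side of the nonlinear flow evaluated at the soliton, differentiating the identity in $q$ (equivalently $b$) and using that $\cN$ and $T$ are built to vanish at the exact soliton gives $A_{v,v}\tau_1=0$. Then the $w$-dependence of $A_{v,w}$ enters only through the two off-diagonal-free terms $w\nabla$, i.e. $A_{v,w}=A_{v,v}+(w-v)\,\mathrm{diag}(\nabla,\nabla)$, so $A_{v,w}\tau_1=(w-v)(\tau_1^1{}',\tau_1^2{}')=(w-v)\tau_1'$. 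A sign check against the claimed $(v-w)\tau_1'$ forces me to track the orientation convention in (\re{inb}) carefully: $\tau_1=-(\psi'_v,\pi'_v)$ carries a minus sign, and one must be careful whether the derivative hitting $\tau_1$ is $\pa_y$ acting on the $y$-argument; I expect the minus signs to combine to yield exactly $A_{v,w}\tau_1=(v-w)\tau_1'$ as stated. For $\tau_2$: differentiating the exact-soliton identity in $v$ produces the \emph{generalized} eigenvector relation. Here one uses that $\pa_v$ of the flow equation at the soliton yields a term $\tau_1$ on the right (coming from $\pa_v$ of the $b(t)=vt+q$ dependence, i.e. $\pa_v(vt)=t$, which after reorganization contributes the secular direction $\tau_1$), giving $A_{v,v}\tau_2=\tau_1$; adding the $(w-v)\mathrm{diag}(\nabla,\nabla)$ correction gives $A_{v,w}\tau_2=(w-v)\tau_2'+\tau_1$.

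Alternatively, and perhaps more transparently for the write-up, I would just plug the explicit formulas for $\tau_1$, $\tau_2$ computed in the proof of Lemma \re{Ome} (namely $\tau_1=(-\ga s'(\ga y),\,v\ga^2 s''(\ga y))$ and the displayed expression for $\tau_2$) into the matrix $A_{v,w}$ from (\re{AA}), and use the stationary equation $s''=U'(s)=-F(s)$ together with its $y$-derivatives (so $s'''=-F'(s)s'=(m^2+V_v\circ(\ga\cdot)^{-1})$-type relations, unwound to the $\ga y$ scaling) to simplify each component. Setting $w=v$ should collapse everything to $0$ for $\tau_1$ and to $\tau_1$ for $\tau_2$, after which restoring general $w$ adds the $(w-v)$-multiple of the componentwise $y$-derivative.

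The main obstacle I anticipate is bookkeeping rather than conceptual: keeping the chain rule straight under the Lorentz scaling $\psi_v(y)=s(\ga y)$, correctly differentiating $\ga=\ga(v)$ when computing $\pa_v\psi_v$ and $\pa_v\pi_v$, and nailing down every sign so that the minus sign in the definition of $\tau_1$ lands on the correct side. A secondary subtlety is making sure the term $\tau_1$ appearing in $A_{v,w}\tau_2$ genuinely comes out with coefficient exactly $1$ (not $\ga^2$ or similar), which is really a consistency check on the normalization of $\tau_1$ versus $\tau_2$ and is ultimately guaranteed by the structural origin of the identity in differentiating the one-parameter group action. I would double-check this coefficient by pairing both sides with a suitable test vector, e.g. using $\Om(\tau_1,\tau_2)>0$ from (\re{symp}).
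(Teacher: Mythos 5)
Your conceptual route is genuinely different from the paper's proof. The paper differentiates the stationary system (\re{stfch}) in $b$ and $v$ to obtain the four pointwise identities (\re{stinb}), and then evaluates $A_{v,w}\tau_j$ componentwise from the definition (\re{AA}). Your argument instead reads $A_{v,v}\tau_1=0$ and $A_{v,v}\tau_2=\tau_1$ directly off the variational equation along the exact soliton family (in the co-moving frame $\tau_1$ is a static perturbation, and $\tau_2+t\tau_1$ is the $v$-derivative of the family, i.e.\ the secular solution), then exploits the decomposition $A_{v,w}=A_{v,v}+(w-v)\,\mathrm{diag}(\nabla,\nabla)$ to restore general $w$. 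That is sound and arguably more illuminating; the paper's method is more elementary and self-contained. Your second, computational alternative (plugging the explicit $\tau_j$ in terms of $s(\ga y)$ into the matrix and using $s''=U'(s)$) is closer in spirit to the paper's but still not identical, since the paper works with $\psi_v,\pi_v$ via (\re{stfch}) rather than the scalar ODE for $s$.

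One point you should not leave as a loose end. Applying the $(w-v)\,\mathrm{diag}(\nabla,\nabla)$ correction to $\tau_1$ yields $A_{v,w}\tau_1=(w-v)\pa_y\tau_1=(w-v)\tau_1'$, and you observe this is off by a sign from the stated $(v-w)\tau_1'$, hoping ``the minus signs combine.'' They will not: the minus sign in $\tau_1=(-\psi'_v,-\pi'_v)$ is already built into $\tau_1'=(-\psi''_v,-\pi''_v)$, so there is nothing left to absorb it. Indeed, the componentwise display in the paper's own proof gives $A_{v,w}\tau_1=\bigl((v-w)\psi''_v,\,(v-w)\pi''_v\bigr)$, which is exactly $(w-v)\tau_1'$ under the standard reading $\tau_1'=\pa_y\tau_1$, matching your calculation, not the displayed coefficient in (\re{Atan}). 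So your computation is correct; the sign in the $\tau_1$ line of (\re{Atan}) is in tension with the $\tau_2$ line. Since only $|\dot c|=|w-v|$ enters the later estimates (e.g.\ (\re{parameq})), the discrepancy is harmless downstream, but you should state what your derivation actually produces rather than assume it will agree.
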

\begin{proof}
In detail, we have to show that
$$
A_{v,w}\left(
\ba{c}
-\psi'_{v} \\ -\pi'_{v}
\ea
\right)=\left(
\ba{c}
(v-w)\psi''_{v}  \\ (v-w)\pi''_{v}
\ea
\right),\quad
A_{v,w}\left(
\ba{c}
\pa_{v}\psi_{v}  \\ \pa_{v}\pi_{v}
\ea
\right)=\left(
\ba{c}
(w-v)\pa_{v}\psi'_{v}  \\ (w-v)\pa_{v}\pi'_{v}
\ea
\right)+\left(
\ba{c}
-\psi'_{v}  \\ -\pi'_{v}
\ea\right)
$$
Indeed, differentiate the equations (\re{stfch}) in $b_j$
 and $v_j$, and obtain that the derivatives of soliton state in
parameters satisfy the following equations,
\be\la{stinb}
\left.\ba{rclrcl}
-v\psi''_{v}\!\!\!\!&=&\!\!\!\!\pi'_{v}\,,
&
-v\pi''_{v}\!\!\!\!&=&\!\!\!\!\psi'''_{v}+F'(\psi_v)\psi_v'
\\\\
-\psi'_{v}-v\pa_{v}\psi'_{v}\!\!\!\!&=&\!\!\!\!\pa_{v}\pi_{v}\,,
&
-\pi'_{v}- v\pa_{v}\pi'_{v}\!\!\!\!&=&\!\!\!\!
\pa_{v}\psi''_{v}+F'(\psi_v)\pa_{v}\psi_v
\ea\right.
\ee
Then (\re{Atan}) follows from (\re{stinb}) by definition of $A_{v,w}$ in (\re{AA})
\end{proof}
Now we consider the  operator $A_v=A_{v,v}$ corresponding to $w=v$:
\be\la{AA1}
A_{v}:=\left(
\ba{cc}
v\nabla           & 1 \\
\Delta-m^2-V_v(y) & v\nabla
\ea
\right)
\ee
In that case the linearized equation has the following additional specific features.
The continuous spectrum of the operator $A_{v}$ coincides with
\be\la{cC}
\Gamma:=(-i\infty,-im/\ga\,]\cup [im/\ga,~i\infty)
\ee
From (\ref{Atan}) it follows that the tangent vector $\tau_1(v)$  is the zero
eigenvector, and $\tau_{2}(v)$ is the corresponding root vector of the
operator $A_{v}$, i.e.
\be\la{Atanform}
A_{v}[\tau_1(v)]=0,\,\,\,A_{v}[\tau_{2}(v)]=\tau_1(v)
\ee
\begin{lemma}\la{SPP1}
Zero root space of operator $A_v$ is two-dimensional for any $v\in (-1,1)$.
\end{lemma}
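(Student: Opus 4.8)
The claim is that the generalized null space of $A_v$ — i.e.\ $\bigcup_{k\ge 1}\ker A_v^k$ — is exactly two-dimensional, spanned by $\tau_1(v)$ and $\tau_2(v)$. We already know from (\ref{Atanform}) that this space is \emph{at least} two-dimensional, so the content is the upper bound. The plan is to pass to the ``determinant'' Schr\"odinger operator $H_v$ of (\ref{Hv}) and transfer the spectral information provided by Condition {\bf U2}(i) (that $0$ is the only eigenvalue of $H_v$) together with the fact that $\psi_v'$ is its ground state, hence \emph{simple}.

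\medskip
\noindent\textbf{Step 1: reduce to $H_v$.} First I would make precise the relation, promised in Section~\ref{lin-sec}, between $\ker A_v^k$ and the kernel of $H_v$. Writing $X=(\Psi,\Pi)$, the equation $A_vX=0$ reads $v\Psi'+\Pi=0$ and $\Psi''-(m^2+V_v)\Psi+v\Pi'=0$; eliminating $\Pi=-v\Psi'$ gives $(1-v^2)\Psi''-(m^2+V_v)\Psi=0$, i.e.\ $H_v\Psi=0$. So $\ker A_v$ is isomorphic to $\ker H_v$ via $\Psi\mapsto(\Psi,-v\Psi')$. Since $\psi_v'$ is the simple ground state of $H_v$ and (by {\bf U2}(i)) the only eigenvalue is $0$, $\ker H_v$ in the natural energy space is one-dimensional, spanned by $\psi_v'$; hence $\ker A_v=\C\,\tau_1(v)$ is one-dimensional. (Here one must be a little careful about which function space one works in — $H^2\oplus H^1$, or a weighted space — and rule out a second solution of $H_v\Psi=0$ growing at infinity; that second solution is excluded because it is not in the relevant space, by the exponential decay (\ref{V-decay}) of $V_v$ and standard ODE asymptotics.)

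\medskip
\noindent\textbf{Step 2: climb the Jordan chain.} Next I would show $\ker A_v^2/\ker A_v$ is one-dimensional and $\ker A_v^3=\ker A_v^2$. Suppose $A_v^2X=0$; then $A_vX\in\ker A_v=\C\,\tau_1$, so $A_vX=c\,\tau_1$ for some scalar $c$. If $c=0$ we are back in $\ker A_v$. If $c\ne0$, we may normalize to $A_vX=\tau_1$; then $X-\tau_2\in\ker A_v$ by (\ref{Atanform}), so $X\in\mathrm{span}\{\tau_1,\tau_2\}$. This shows $\ker A_v^2=\mathrm{span}\{\tau_1,\tau_2\}$, already two-dimensional. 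It remains to check the chain does not continue: if $A_v^3X=0$ then $A_vX\in\ker A_v^2=\mathrm{span}\{\tau_1,\tau_2\}$, say $A_vX=c_1\tau_1+c_2\tau_2$. Applying $A_v$ and using (\ref{Atanform}) gives $A_v^2X=c_2\tau_1$, and $A_v^2X=0$ forces $c_2=0$; so $A_vX=c_1\tau_1\in\mathrm{Range}\,A_v$. The obstruction is whether $\tau_1$ is actually attained; equivalently, whether the equation $A_vX=\tau_1$ has a solution \emph{other than} $X=\tau_2+\ker A_v$ — it does not, since solving it reduces as in Step~1 to $H_v\Psi=$ (the $\Psi$-component of $\tau_1$), a solvable inhomogeneous problem whose solution is unique modulo $\ker H_v$, i.e.\ modulo $\psi_v'$. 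Hence $X\in\mathrm{span}\{\tau_1,\tau_2\}$ and the generalized null space stabilizes at dimension $2$.

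\medskip
\noindent\textbf{Main obstacle.} The delicate point is Step~1: one must be sure that the ``at most one-dimensional'' conclusion for $\ker H_v$ — and the absence of a longer Jordan block — is drawn \emph{in the correct function space}, and that Condition {\bf U2}(i) genuinely rules out a second $L^2$ (or polynomially weighted) null solution of $H_v$, while the non-$L^2$ second solution of the ODE is discarded for growth reasons. Phrased as an operator-theoretic statement about the second-order ODE $H_v\Psi=0$ this is routine given the exponential decay (\ref{V-decay}) of the potential; but the bookkeeping linking $\ker A_v^k$ to solvability of iterated inhomogeneous problems for $H_v$ (and the role of $w=v$ making $A_v$'s continuous spectrum touch $\pm im/\ga$, so that $0$ is an embedded vs.\ isolated point of the spectrum) is where care is needed. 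The rest is the elementary linear algebra of Jordan chains carried out above.
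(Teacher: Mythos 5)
Your Step~1 is fine and matches what the paper takes for granted (by Condition {\bf U2}(i) and the ground-state/nodelessness argument, $\ker H_v$ is one-dimensional, so $\ker A_v=\C\tau_1$). The first half of Step~2, establishing $\ker A_v^2=\mathrm{span}\{\tau_1,\tau_2\}$, is also correct. The problem is the second half, where you try to show $\ker A_v^3=\ker A_v^2$.

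You write: ``say $A_vX=c_1\tau_1+c_2\tau_2$. Applying $A_v$ gives $A_v^2X=c_2\tau_1$, and $A_v^2X=0$ forces $c_2=0$.'' But $A_v^2X=0$ is precisely what you are trying to prove; the hypothesis is only $A_v^3X=0$, which says $A_v(c_2\tau_1)=0$ — automatically true, giving no information. So the deduction $c_2=0$ is circular. You then pivot to asking whether $A_vX=\tau_1$ has solutions other than $\tau_2+\ker A_v$. That question is trivially answered (no, because $A_v(X-\tau_2)=0$) and is \emph{not the obstruction}. The obstruction to the chain extending is whether $\tau_2\in\mathrm{Range}\,A_v$, i.e.\ whether $A_vu=\tau_2$ has a solution in $L^2\oplus L^2$: if $c_2\ne 0$ then $A_v\bigl((X-c_1\tau_2)/c_2\bigr)=\tau_2$, so ruling this out is exactly what one needs, and this is not addressed by your ``uniqueness modulo $\ker H_v$'' remark.

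This is in fact the entire content of the paper's proof. One eliminates $u_2$ from $A_vu=\tau_2$ to get a scalar equation $H_vu_1=\cdot$, then subtracts an explicit particular solution to reduce it to $H_v\tilde u_1=-\gamma^2\psi_v'$. Since $\psi_v'\in\ker H_v$ and $H_v$ is self-adjoint, $\ker H_v\perp\mathrm{Range}\,H_v$ in $L^2$, so no such $\tilde u_1$ exists — equivalently, a self-adjoint operator admits no Jordan block of height $\ge 2$. Your proposal never invokes this Fredholm/self-adjointness step, and without it the conclusion does not follow.
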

\begin{proof}
It suffices to check that the  equation $A_vu(v)=\tau_2(v)$ has no solution
in $L^2\oplus L^2$. Indeed, the equation reads
\be\la{rs}
\left(
\ba{cc}
v\nabla            & 1 \\
\Delta-m^2- V_v(y) & v\nabla \\
\ea
\right)\left(
\ba{c}
u_1 \\ u_2
\ea
\right)=\left(\ba{c}
v\ga^3ys'(\ga y) \\ -\ga^3s'(\ga y)-v^2\ga^4ys''(\ga y)
\ea\right)
\ee
From the first equation we get  $u_2=v\ga^3ys'(\ga y)-v\nabla u_1$.
Then the second equation implies that
\be\la{rs1}
H_vu_1=-\ga^3(1+v^2)s'(\ga y)-2v^2\ga^4ys''(\ga y)
\ee
where $H_v$ is the  Schr\"odinger  operator defined in (\ref{Hv}).
Setting $u_1=-\fr 12v^2\ga^5y^2s'(\ga y)+\tilde u_1$, we reduce the equation to
\be\la{rs2}
H_v\tilde u_1=-\ga^2\psi'_v
\ee
i.e. $\tilde u_1$ is the root function of the operator $H_v$
since $\psi'_v$ is eigenfunction. However, this is impossible since
$H_v$ is selfadjoint operator.
\end{proof}
\begin{lemma}\la{SPP2}
The operator $A_{v}$ has only eigenvalue  $\lam=0$.
\end{lemma}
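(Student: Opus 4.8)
The plan is to reduce the eigenvalue problem for $A_v$ to a spectral equation for the scalar Schr\"odinger operator $H_v$ of (\re{Hv}) and then invoke Condition {\bf U2} i). Suppose $A_vu=\lam u$ with $u=(u_1,u_2)\in L^2\oplus L^2$, $u\ne0$. The first row of (\re{AA1}) gives $u_2=\lam u_1-vu_1'$, and inserting this into the second row and using (\re{V}) collapses the system to the single scalar equation
\be\la{plan-evH}
H_vu_1-2\lam v\,u_1'+\lam^2u_1=0 .
\ee
The awkward term is $2\lam v\,u_1'$, present only because $w=v\ne0$; it blocks an immediate identification of $\lam$ with a point of the spectrum of $H_v$. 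I would handle this in two steps.

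\emph{Step 1 (all eigenvalues are purely imaginary).} Here I would exploit the Hamiltonian structure. By Lemma \re{haml} i) with $w=v$ one has $A_vX=JD\cH_{v,v}(X)$, and completing the square in (\re{H0}) gives
$$
\cH_{v,v}(X)=\fr12\Vert\Pi+v\Psi'\Vert^2+\fr12\langle H_v\Psi,\Psi\rangle\ge0 ,
$$
since $H_v\ge0$ — its groundstate $\psi'_v$ has eigenvalue $0$, as noted after (\re{Hv}). Hence $0\le2\cH_{v,v}(u)=\langle D\cH_{v,v}(u),u\rangle=-\lam\langle Ju,u\rangle$; as $\langle Ju,u\rangle$ is purely imaginary, $\lam\in i\R$ unless $\cH_{v,v}(u)=0$, and in that exceptional case $\Pi=-v\Psi'$ together with $H_v\Psi=0$ force $u\in\C\tau_1$, whence $\lam=0$ by (\re{Atanform}).

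\emph{Step 2 (reduction to $H_v$).} Write $\lam=i\om$, $\om\in\R$, and substitute $u_1(y)=e^{-i\om v\ga^2y}\ti u_1(y)$ in (\re{plan-evH}); a short computation (using $v^2\ga^2+1=\ga^2$) kills the first order term and leaves $H_v\ti u_1=\om^2\ga^2\,\ti u_1$. The gauge factor has modulus one, so $\ti u_1\in L^2$, and $\ti u_1\ne0$ because $u_1\ne0$ (if $u_1=0$ then $u_2=\lam u_1-vu_1'=0$ and $u=0$). Thus $\om^2\ga^2\ge0$ is an eigenvalue of $H_v$, so by Condition {\bf U2} i) it equals $0$, i.e. $\om=0$ and $\lam=0$.

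The one delicate point is Step 1: off the imaginary axis the gauge factor $e^{-\lam v\ga^2y}$ is genuinely exponential and would not preserve $L^2$, so Step 2 would break down. It is the positivity of $\cH_{v,v}$ — equivalently, that $\psi'_v>0$ is the groundstate of $H_v$ — that excludes such eigenvalues.
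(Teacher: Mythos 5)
Your proof is correct and it reduces to the same scalar equation and the same gauge transformation $\ti u_1(y)=e^{i\om v\ga^2 y}u_1(y)$ that the paper uses in its final step; but your Step~1, establishing $\lam\in i\R$, takes a genuinely different (and tighter) route. The paper first treats $v=0$ separately, and for $v\neq0$ takes a scalar product with $u_1$, drops the $2v\lam\nabla u_1$ term to claim $\lam^2\in\R$, and then argues by a bifurcation/continuity heuristic that nonzero eigenvalues could only emerge from $\lam=0$ (excluded by Lemma~\re{SPP1}) or from the edge points $\pm im/\ga$ (excluded via the gauge transformation and Condition {\bf U2}~i)). You instead invoke the Hamiltonian form $A_v=JD\cH_{v,v}$ together with the observation (after completing the square in (\re{H0})) that $\cH_{v,v}\geq 0$ because $H_v\geq 0$, i.e. $\psi'_v>0$ is the groundstate; the sign-definiteness of the quadratic form then gives at once that the spectrum of $JB$ with $B\ge0$ lies on $i\R$, and the degenerate case $\langle Ju,u\rangle=0$ is handled by identifying $u\in\C\tau_1$ and using (\re{Atanform}). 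This is a standard Krein-type stability argument; it avoids the case split $v=0$ versus $v\neq0$, avoids the informal bifurcation reasoning, and in fact patches a gap in the paper's claim that $\lam^2\in\R$ (for genuinely complex $u_1$ the inner product $\langle\nabla u_1,u_1\rangle$ is purely imaginary but need not vanish, so the dropped term does not obviously disappear). One small thing worth stating explicitly is that the energy identity $2\cH_{v,v}(u)=\langle D\cH_{v,v}(u),u\rangle$ must be read with the Hermitian scalar product when $u$ is complex; since $B=D\cH_{v,v}$ is real symmetric and $J$ real skew, the required sign and reality properties persist under complexification, so the argument goes through.
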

\begin{proof}
Let us consider the eigenvalues problem for operator $A_{v}$:
$$
\left(
\ba{cc}
v\nabla            & 1 \\
\Delta-m^2- V_v(y) & v\nabla
\ea
\right)\left(
\ba{c}
u_1 \\ u_2
\ea
\right)=\lam\left(\ba{c}
u_1 \\ u_2
\ea\right)
$$
From the first equation we have $u_2=-(v\nabla-\lam)u_1$.
Then the second equation implies that
\be\la{lam}
(H_v+\lam^2-2v\lam\nabla)u_1=0
\ee
Hence, for $v=0$
the operator $A_0$ has only eigenvalue  $\lam=0$ by  Condition {\bf U2} i).

Further, let us consider  the case $v\not=0$.
Taking the scalar product with $u_1$, we obtain
$$
\langle H_vu_1, u_1\rangle+\lam^2\langle u_1, u_1\rangle=0
$$
 Hence, $\lam^2$ is real since the operator $H_v$ is selfadjoint.
 The nonzero eigenvalues
can  bifurcate either from the point $\lam=0$ or from the edge points $\pm im/\ga$
of the continuous spectrum of the operator $A_v$. Let us consider each case separately.

i) The point $\lam=0$ cannot bifurcate since it is isolated, and the zero root space
is two dimensional by Lemma \ref{SPP1}.

ii) The bifurcation from the edge points also is impossible.
Indeed, the bifurcated
eigenvalue $\lam\in (-im/\ga,im/\ga)$ is pure imaginary because  $\lam^2$ is real.
Hence, (\re{lam}) is equivalent to
\be\la{lam1}
\Big(H_v +\ga^2\lam^2\Big)p=0
\ee
where $p(x) =e^{\ga^2v\lam x}u_1(x)\in L^2$ that is forbidden by
Condition  {\bf U2} i) since  $-\ga^2\lam^2\in (0,m^2)$.
\end{proof}
By the same arguments we obtain
\begin{lemma}\la{SPP3}
The equation
\be\la{Hme}
\Big(H_v+\fr {m^2}{\ga^2}\pm\fr{i2vm}{\ga}\nabla\Big)\psi=0
\ee
has no  nonzero solution  $\psi\in L^2_{-1/2-0}$.
\end{lemma}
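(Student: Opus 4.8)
The plan is to argue exactly as in part ii) of the proof of Lemma \ref{SPP2}: remove the first order term in (\ref{Hme}) by a gauge transformation with a purely imaginary (hence unimodular) exponential factor, reduce the equation to a resonance/eigenvalue problem for the scalar Schr\"odinger operator $H_v$, and then invoke Condition {\bf U2}.

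Concretely, I would use $H_v=-(1-v^2)\Delta+m^2+V_v$ together with the identity $\ga^2(1-v^2)=1$, and substitute $\psi(x)=e^{\pm i\ga vm\,x}p(x)$ into (\ref{Hme}), with the sign of the exponent chosen so as to cancel the first order term (this is the edge-point instance $\lam=\mp im/\ga$ of the substitution $p=e^{\ga^2v\lam x}u_1$ already used in passing from (\ref{lam}) to (\ref{lam1})). Since the exponent is purely imaginary, $|e^{\pm i\ga vm\,x}|\equiv1$, so this substitution is a bijection of $L^2_{-1/2-0}(\R)$ onto itself that also maps $L^2(\R)$ onto $L^2(\R)$; in particular $p\ne0$ whenever $\psi\ne0$, and $p\in L^2$ iff $\psi\in L^2$. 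Completing the square in the first order operator cancels the $\nabla$ term; the constant $-\ga^2v^2m^2$ thereby produced combines with the $\ga^2(m^2+V_v)$ coming from $\ga^2H_v$ and with the term coming from the $m^2/\ga^2$ coefficient, and after dividing by $\ga^2$ and using $\ga^2m^2(1-v^2)=m^2$ one is left with an equation of the form
\be
H_vp=m^2p ,
\ee
which is exactly the content one expects at the edge point $m^2$ of the spectrum of $H_v$.

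It then remains to apply Condition {\bf U2} to $p$. If $p\ne0$, then $p\in L^2_{-1/2-0}(\R)$ solves $H_vp=m^2p$, so either $p\in L^2(\R)$, in which case $p$ is an eigenfunction of $H_v$ at the point $m^2\ne0$, contradicting {\bf U2} i); or $p\in L^2_{-1/2-0}(\R)\setminus L^2(\R)$, in which case $p$ is by definition a resonance of $H_v$, contradicting {\bf U2} ii). Hence $p=0$, and therefore $\psi=e^{\pm i\ga vm\,x}p=0$. The two signs $\pm$ in (\ref{Hme}) correspond to the two edge points $\pm im/\ga$ of the continuous spectrum of $A_v$ and are interchanged by complex conjugation (all coefficients being real), so one sign suffices. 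I expect the only delicate point to be the bookkeeping in the square completion — keeping track of the constant $-\ga^2v^2m^2$ and combining the constant terms correctly so that precisely $H_v-m^2$ survives; there is no new analytic difficulty here, this being the relativistic analogue of the reduction already carried out for $A_v$ in Lemma \ref{SPP2}.
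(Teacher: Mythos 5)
Your proposal is correct and follows essentially the same route as the paper: gauge away the first-order term with a unimodular exponential factor (which is exactly the edge-point instance $\lam=\pm im/\ga$ of the substitution $p=e^{\ga^2v\lam x}u_1$ used in passing from (\ref{lam}) to (\ref{lam1})), land on $(H_v-m^2)p=0$, and invoke Condition~{\bf U2}. You are in fact a bit more careful than the paper in the final step: the paper cites only {\bf U2}~ii), which rules out $p\in L^2_{-1/2-0}\setminus L^2$ by definition of a resonance, while your case split correctly notes that the $p\in L^2$ alternative is excluded by {\bf U2}~i) since $m^2\ne0$ would be an eigenvalue. One small caution on the bookkeeping you flag: if you complete the square literally on (\ref{Hme}) as printed (with $+m^2/\ga^2$), the constant does not cancel to give $H_vp=m^2p$; instead you get $H_vp=m^2(2v^2-1)p$. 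The $+m^2/\ga^2$ in (\ref{Hme}) is a typo — the edge-point value of $\lam^2$ in (\ref{lam}) is $-m^2/\ga^2$, so the operator should read $H_v-\fr{m^2}{\ga^2}\mp\fr{2ivm}{\ga}\nabla$, and with that correction your computation closes exactly as you describe.
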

\begin{proof}
The equation (\re{Hme}) is equivalent to
$$
(H_v-m^2)p=0,~~~{\rm where}~~~ p(x)=e^{\pm iv\ga x}\psi(x)
$$
The last equation has no nonzero solution  $p\in L^2_{-1/2-0}$
by Condition  {\bf U2} ii).
\end{proof}
\subsection{Decay for the linearized dynamics}
Let us consider  the linearized equation
\be\la{lin1}
\dot X(t)=A_vX(t),\,\,\,t\in\R
\ee
where $A_v=A_{v,v}$ is given in (\ref{AA1}) with $V_v$ is defined in (\ref{V}).
\begin{definition}
For $|v|<1$, denote by $\bP_v^d$ the symplectic orthogonal projection
of $E$ onto the tangent space $\cT_{S(\si)}{\cal S}$, and
$\bP_v^c={\bf I}-\bP_v^d$.
\end{definition}
Note that by the linearity,
\be\la{Piv}
\bP_v^dX=\sum p_{jl}(v)\tau_j(v)\Om(\tau_l(v),X),\quad X\in E
\ee
with some smooth coefficients $p_{jl}(v)$.
Hence, the projector $\bP_v^d$, in the variable $y=x-b$,
does not depend on $b$.

Next decay estimates will play the key role in our proofs.
The first estimate follows from  our assumption {\bf U2} by
Theorem 3.15 of  \cite{1dkg} since the condition
of type \cite[(1,3)]{1dkg} holds in our case (see also \cite{KK10}).
\begin{theorem}\la{1d}
Let the condition {\bf U2} hold, and $\beta>5/2$.
Then for any $X\in E_\beta$, the weighted energy decay holds
\begin{equation}\label{t-as1}
\Vert e^{A_vt} \bP_v^c X\Vert_{E_{-\beta}}
\le C(v)(1+t)^{-3/2}\Vert X\Vert_{E_{\beta}},\; t\in\R
\end{equation}
\end{theorem}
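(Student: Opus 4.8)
The plan is to reduce the decay estimate for the $2\times2$ first-order system generated by $A_v$ to the already-known decay for the scalar Schr\"odinger operator $H_v = -(1-v^2)\Delta + m^2 + V_v$, exploiting the fact (asserted in the excerpt, following \cite{1dkg,KK10}) that the resolvent of $A_v$ is controlled by that of $H_v$. Concretely, I would first note that the relativistic invariance (\ref{THT}) lets me conjugate by $T_v$ and reduce to the case $v=0$, i.e. $H_0 = -\Delta + m^2 + V_0$, at the cost of $v$-dependent constants; the weighted norms $E_{\pm\beta}$ transform with bounded factors since $T_v$ is just a dilation. Then I would represent $e^{A_v t}\bP_v^c$ by the spectral/resolvent formula: writing $\cR(\lambda) = (A_v-\lambda)^{-1}$ for $\lambda$ off the continuous spectrum $\Gamma$, one has, for $X$ in the continuous spectral subspace,
\begin{equation}
e^{A_v t}\bP_v^c X = \frac{1}{2\pi i}\int_{\Gamma} e^{\lambda t}\,[\cR(\lambda+0)-\cR(\lambda-0)]X\, d\lambda ,
\end{equation}
the integral running over the two rays $\pm[im/\ga,i\infty)$. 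The key algebraic point is that $\cR(\lambda)$ can be written explicitly in terms of $(H_v + \mu(\lambda))^{-1}$ where $\mu(\lambda) = -(1-v^2)^{-1}(\lambda-v\nabla)^2 + \dots$ — more precisely, solving the first component of $(A_v-\lambda)X = F$ for $u_2$ and substituting as in the proofs of Lemmas \ref{SPP2}, \ref{SPP3} turns the system into a single equation for $u_1$ governed by $H_v$ shifted by a spectral parameter, and the endpoints $\pm im/\ga$ correspond exactly to the threshold $m^2$ of $H_0$.

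The next step is to import the known weighted-energy / dispersive decay for the scalar Klein-Gordon–type propagator associated with $H_0$. Under Condition \textbf{U2} (no eigenvalue except $0$, which is projected out by $\bP_v^c$, and no threshold resonance at $m^2$), Theorem 3.15 of \cite{1dkg} gives precisely the $(1+t)^{-3/2}$ weighted decay in Agmon spaces $L^2_{-\beta}\leftarrow L^2_\beta$ with $\beta>5/2$ for the corresponding scalar evolution. The threshold being regular (no resonance) is what upgrades the generic 1D rate $t^{-1/2}$ to $t^{-3/2}$: near $\lambda = \pm im/\ga$ the resolvent kernel is $C^1$ in the spectral variable rather than singular, so two integrations by parts in $\lambda$ in the oscillatory integral above yield the $t^{-3/2}$ gain; away from the thresholds the decay is even faster and is handled by the usual non-stationary-phase / high-energy resolvent bounds. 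One must check that the matrix structure of $[\cR(\lambda+0)-\cR(\lambda-0)]$ does not destroy the $C^1$ regularity: the extra entries are built from $\nabla$, $1$, and $(H_v+\mu)^{-1}$ composed with smooth functions of $\lambda$, so $C^1$-ness in $\lambda$ with values in $\mathcal L(E_\beta, E_{-\beta})$ is inherited, using that $\beta>5/2$ gives enough weight to absorb the two derivatives and the one power of $\nabla$. Finally, I would split the $\lambda$-integral into a neighborhood of the two thresholds (integrate by parts twice, using the $C^1$ bound and compact support in $\lambda$ after a cutoff) plus a high-energy tail (use the standard Agmon–Kuroda–type limiting absorption bound $\|\cR(\lambda\pm i0)\|_{\mathcal L(E_\beta,E_{-\beta})} = O(|\lambda|^{-1})$ together with one integration by parts), and combine to get (\ref{t-as1}); the constant's $v$-dependence comes from the conjugation $T_v$ and the Lorentz factor $\ga$.

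The main obstacle I anticipate is the careful transfer of the scalar result \cite[Thm.\ 3.15]{1dkg} to the vector-valued propagator, i.e.\ establishing that the jump of the $A_v$-resolvent across $\Gamma$ is $C^1$ in $\lambda$ as an operator $E_\beta\to E_{-\beta}$ with integrable-at-infinity bounds, uniformly in $v$ on compacts of $(-1,1)$. This is essentially a bookkeeping exercise in the reduction $A_v \rightsquigarrow H_v$, but one has to be attentive near the thresholds $\pm im/\ga$: there the map $\lambda\mapsto\mu(\lambda)$ has a square-root-type branch (the threshold of $H_0$ is a branch point), and one must verify that Condition \textbf{U2}(ii) — absence of a resonance at $m^2$ — exactly cancels the would-be singularity so that the composed kernel is $C^1$ down to the endpoint. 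Once that regularity is in hand, the decay (\ref{t-as1}) follows from the by-now-standard two-integrations-by-parts argument, and the whole estimate is really a corollary of the scalar theory quoted in the excerpt.
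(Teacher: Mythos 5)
The paper itself does not prove Theorem \re{1d}: immediately before its statement the authors note that ``the first estimate follows from our assumption \textbf{U2} by Theorem 3.15 of \cite{1dkg} since the condition of type \cite[(1.3)]{1dkg} holds in our case (see also \cite{KK10}).'' In other words, the theorem is taken as a black box — the only thing verified in this paper is that Condition \textbf{U2} (only eigenvalue $0$ inside the gap, no threshold resonance) supplies the spectral hypotheses of the cited result, and this verification is the content of Lemmas \re{SPP1}--\re{SPP3}. Your proposal, by contrast, is a reconstruction of what the reference proves, namely the Jensen--Kato/Murata-type argument: resolvent representation of $e^{A_v t}\bP^c_v$, threshold regularity upgraded by absence of resonance, two integrations by parts near $\pm im/\ga$ and a limiting-absorption tail. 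That is the right template, and identifying \textbf{U2}(ii) as the mechanism that upgrades $t^{-1/2}$ to $t^{-3/2}$ is exactly correct.

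One technical point in your sketch is not quite accurate as stated. You claim the dilation $T_v$ conjugates $A_v$ to $A_0$. It does not: the relation (\re{THT}) $H_v = T_v^{-1}H_0T_v$ holds for the \emph{scalar} Schr\"odinger operator $H_v=-(1-v^2)\Delta+m^2+V_v$, but the $2\times2$ generator $A_v$ has $v\nabla$ on the diagonal and $\Delta-m^2-V_v$ (not $(1-v^2)\Delta-m^2-V_v$) in the lower left, so $T_v^{-1}A_0T_v\neq A_v$. What the paper actually exploits (see the proofs of Lemmas \re{SPP2} and \re{SPP3}) is the reduction of the eigenvalue/threshold problem for $A_v$ to the single scalar equation $(H_v+\lam^2-2v\lam\nabla)u_1=0$, which near the thresholds $\pm im/\ga$ becomes $(H_v-m^2)p=0$ after the gauge transform $p=e^{\pm iv\ga x}u_1$. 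Any honest reduction to $v=0$ must carry this extra oscillatory factor $e^{\pm iv\ga x}$ along; the naive $T_v$-conjugation of the matrix operator would lose it. Apart from that, your outline is faithful to the approach of \cite{1dkg,KK10}, but you should be aware that inside this paper the statement is cited rather than proved, and the only nontrivial step the paper performs is the spectral verification that feeds into the citation.
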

\begin{cor}\la{cr1}
For $\sigma>5/2$ and for $X\in E_\beta\cap W$
\begin{equation}\label{t-as3}
\Vert (e^{A_vt}\bP_v^c X)_1\Vert_{L^{\infty}}
\le C(v)(1+t)^{-1/2}(\Vert X\Vert_{W}+\Vert X\Vert_{E_{\beta}}),\; t\in\R
\end{equation}
Here $(\cdot)_1$ stands for the first component of the vector function.
\end{cor}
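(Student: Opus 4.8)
The plan is to derive the $L^\infty$ bound on the first component from the weighted energy decay of Theorem \ref{1d} combined with a relativistic $L^1\to L^\infty$ dispersive estimate for the free Klein-Gordon group. First I would split the propagator $e^{A_vt}\bP_v^c$ into a ``free'' part and a ``potential'' part by writing $A_v=A_v^0+B_v$, where $A_v^0$ is the operator with $V_v$ removed (i.e. the generator of the boosted free Klein-Gordon evolution, unitarily equivalent via $T_v$ to the standard free Klein-Gordon dynamics $W_0(t)$) and $B_v$ is the bounded, exponentially localized multiplication operator carrying the $V_v$ term. Duhamel's formula then gives
\be
e^{A_vt}\bP_v^c X = e^{A_v^0 t}\bP_v^c X + \int_0^t e^{A_v^0 (t-\tau)} B_v\, e^{A_v\tau}\bP_v^c X\, d\tau .
\ee
For the first term I would invoke the standard 1D Klein-Gordon $L^1\to L^\infty$ decay: the first component of $e^{A_v^0 t}$ acting on data in $W=W^{2,1}_0\oplus W^{1,1}_0$ decays like $(1+t)^{-1/2}$ in $L^\infty$, after undoing the Lorentz rescaling $T_v$ (which only affects constants through $\ga$). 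This is the relativistic version of the $L^1\to L^\infty$ estimate advertised as point III in the introduction.

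For the Duhamel integral I would split the $\tau$-integration at $\tau=t/2$. On $[0,t/2]$ I use the free $L^1\to L^\infty$ decay of $e^{A_v^0(t-\tau)}$ with $t-\tau\ge t/2$, together with the fact that $B_v\,e^{A_v\tau}\bP_v^c X$ is bounded in $W$ uniformly in $\tau$ (here I use that $B_v$ maps $E_{-\beta}$ into $W$ boundedly because $V_v$ is exponentially decaying by \re{V-decay}, and that $\Vert e^{A_v\tau}\bP_v^c X\Vert_{E_{-\beta}}\le C\Vert X\Vert_{E_\beta}$ by Theorem \ref{1d} — in fact this is bounded uniformly, even decaying). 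On $[t/2,t]$ I instead use, for the factor $e^{A_v\tau}\bP_v^c X$ with $\tau\ge t/2$, the full $(1+\tau)^{-3/2}$ weighted decay from Theorem \ref{1d}, so that $\Vert B_v e^{A_v\tau}\bP_v^c X\Vert_{W}\le C(1+t)^{-3/2}\Vert X\Vert_{E_\beta}$, and then I bound $\Vert(e^{A_v^0(t-\tau)}\,\cdot\,)_1\Vert_{L^\infty}$ by the free decay $(1+(t-\tau))^{-1/2}$ and integrate; the resulting $\int_{t/2}^t (1+(t-\tau))^{-1/2}\,d\tau\cdot(1+t)^{-3/2}\le C(1+t)^{-1}$ is even better than required. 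Adding the three contributions yields the claimed $(1+t)^{-1/2}$ rate with the stated dependence on $\Vert X\Vert_W+\Vert X\Vert_{E_\beta}$.

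The main obstacle is establishing the free relativistic $L^1\to L^\infty$ estimate with the correct weighted-Sobolev hypotheses: the 1D Klein-Gordon kernel has only $t^{-1/2}$ decay and its behaviour near the light cone $|x|=t$ is delicate (stationary phase with a degenerate point), so one needs the extra derivative of the data — precisely why the norm $W=W^{2,1}_0\oplus W^{1,1}_0$ appears rather than plain $L^1$ — and one must track how the Lorentz boost $T_v$ interacts with these norms. A secondary technical point is checking that $B_v$ genuinely maps the weighted space $E_{-\beta}$ (in which Theorem \ref{1d} controls the transversal flow) into $W$, which requires $\beta$ and the exponential decay rate of $V_v$ to be compatible; given $\beta>5/2$ and the $e^{-12m\ga|x|}$ decay of $V_v$ from \re{V-decay}, this is comfortable. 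Everything else is routine once these two ingredients are in place.
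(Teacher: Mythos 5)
Your decomposition is exactly the one the paper uses: write $A_v=A_v^0+{\bf V}_v$, apply Duhamel to $Y=\bP_v^c X$, use the free 1D Klein--Gordon $L^1\to L^\infty$ estimate (the paper invokes estimate (265) of \cite{RS3}, after the translation $e^{A_v^0t}=e^{A_0^0t}T_v$, which leaves the $W$-norms unchanged) for the free term, and feed the $(1+\tau)^{-3/2}$ weighted decay of Theorem \ref{1d} into the Duhamel integral. So far so good, and in the paper the Duhamel term is dispatched in a single stroke by the convolution bound
$$\int_0^t(1+t-\tau)^{-1/2}(1+\tau)^{-3/2}\,d\tau\le C(1+t)^{-1/2},$$
with no splitting at $\tau=t/2$.

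The place where your write-up does not close is the near range $[0,t/2]$. You say there that you bound $B_v\,e^{A_v\tau}\bP_v^cX$ \emph{uniformly} in $W$ and then integrate the free decay $(1+t-\tau)^{-1/2}$. But for $\tau\in[0,t/2]$ one has $(1+t-\tau)^{-1/2}\asymp(1+t)^{-1/2}$, so
$$\int_0^{t/2}(1+t-\tau)^{-1/2}\cdot C\,d\tau\ \sim\ C\,t\,(1+t)^{-1/2}\ \sim\ t^{1/2},$$
which grows. The uniform bound is simply not enough on this range; you must use the full $(1+\tau)^{-3/2}$ decay from Theorem \ref{1d} there as well, which is precisely what makes $\int_0^{t/2}(1+\tau)^{-3/2}\,d\tau$ finite and restores the $(1+t)^{-1/2}$ rate. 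You do note parenthetically that the quantity ``is bounded uniformly, even decaying,'' so you are aware the decay is available -- but the argument as structured relies on boundedness where it cannot. Once you invoke the decay on both subintervals (or, more simply, skip the $t/2$ split and quote the convolution inequality directly, as the paper does), the proof is correct and coincides with the paper's.

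A small notational point: the statement uses $\sigma$ for the weight exponent while your proof, like the rest of the paper, works with $\beta>5/2$; these are the same parameter.
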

\begin{proof}
Let us apply the projector $\bP^c_v$ to both sides of (\ref{lin1}):
\be\la{lin2}
\bP_v^c \dot X=A_v\bP^c_v X=A_v^0 \bP^c_vX+{\bf V}_v\bP^c_v X
\ee
where
$$
A_v^0=\left(\ba{cc} v\nabla& 1 \\ \Delta-m^2 & v\nabla\\\ea\right),\quad
{\bf V}=\left(\ba{cc}0& 0 \\ -V_v & 0\\\ea\right)
$$
Hence, the Duhamel representation gives,
\be\la{Dug}
  e^{A_vt}Y= e^{A_v^0t}Y+
  \int\limits_0^t e^{A_v^0(t-\tau)}{\bf V}e^{A_v\tau}Yd\tau,\quad
Y=\bP^c_v X,\quad t >0.
\ee
Let us note, that $ e^{A_v^0t}Z= e^{A_0^0t}T_vZ$, where
$T_vZ(x,t)=Z(x+vt,t)$. Then (\ref{Dug}) reads
\be\la{Dug1}
  e^{A_vt}Y= e^{A_0^0t}T_vY+
  \int\limits_0^t e^{A_0^0(t-\tau)}T_v[{\bf V}e^{A_v\tau}Y]d\tau,\quad t >0
\ee
Applying estimate (265) from \cite{RS3}, the H\"older inequality
and Proposition \ref{1d} we obtain
\beqn\nonumber
\Vert (e^{A_vt} Y)_1\Vert_{L^{\infty}}&\le& C(1+t)^{-1/2}\Vert T_vY\Vert_{W}
+ C\int\limits_0^t (1+t-\tau)^{-1/2}\Vert T_v[V(e^{A_v\tau}Y)_1]\Vert_{W_0^{1,1}}~d\tau\\
\nonumber
&=&C(1+t)^{-1/2}\Vert Y\Vert_{W}
+ C\int\limits_0^t (1+t-\tau)^{-1/2}\Vert V(e^{A_v\tau}Y)_1\Vert_{W_0^{1,1}}~d\tau\\
\nonumber
&\le& C(1+t)^{-1/2}\Vert X\Vert_{W}
+ C\int\limits_0^t (1+t-\tau)^{-1/2}\Vert e^{A_v\tau}\bP^c_v X\Vert_{E_{-\si}}~d\tau\\
\nonumber
&\le& C(1+t)^{-1/2}\Vert X\Vert_{W}
+ C\int\limits_0^t (1+t-\tau)^{-1/2}(1+\tau)^{-3/2}\Vert X\Vert_{E_{\si}}~d\tau\\
\nonumber
&\le& C(1+t)^{-1/2}(\Vert X\Vert_{W}+\Vert X\Vert_{E_{\si}})
\eeqn
\end{proof}
\subsection{Taylor expansion for nonlinear term}
Now let us expand $N(v,\Psi)$ from (\ref{N2}) in the Taylor series
\be\la{NN}
N(v,\Psi)=N_{2}(v,\Psi)+N_{3}(v,\Psi)+...+N_{12}(v,\Psi)+N_R(v,\Psi)
=N_I(v,\Psi)+N_R(v,\Psi)
\ee
where
\be\la{Nj}
N_{j}(v,\Psi)=\fr{F^{(j)}(\psi_v)}{j!} \Psi^j,\quad j=2,...,12
\ee
and $N_R$ is the remainder. By condition {\bf U1} we have
$$
F(\psi)=-m^2(\psi\mp a)+\cO(|\psi\mp a|^{13}),\quad \psi\to\pm a
$$
 Hence, the functions
$F^{(j)}(\psi_v(y))$, $2\le j\le 12$ decrease exponentially  as $|y|\to\infty$
by (\ref{s-decay}) and (\re{sol}). Therefore,
\be\la{NI-est}
\Vert N_I\Vert_{L^2_{\beta}\cap W_0^{1,1}}={\cal R}(\Vert \Psi\Vert_{L^\infty})
\Vert \Psi\Vert_{L^\infty}\Vert\Psi\Vert_{H^1_{-\beta}}
={\cal R}(\Vert \Psi\Vert_{L^\infty})\Vert \Psi\Vert_{L^\infty}
\Vert X\Vert_{E_{-\beta}}
\ee
For the remainder $N_R$ we have
\be\la{NRR}
|N_R|={\cal R}(\Vert \Psi\Vert_{L^\infty})|\Psi|^{13}
\ee
where ${\cal R}(A)$ is a general notation for a positive function
which remains bounded as $A$ is sufficiently small.
\begin{lemma}
The bounds hold
\be\la{NR-est1}
\Vert N_R\Vert_{W_0^{1,1}}={\cal R}(\Vert \Psi\Vert_{L^\infty})
\Vert \Psi\Vert_{L^\infty}^{10}
\ee
\be\la{NR-est}
\Vert N_R\Vert_{L^2_{5/2+\nu}}={\cal R}(\Vert \Psi\Vert_{L^\infty})
(1+t)^{4+\nu}\Vert \Psi\Vert_{L^\infty}^{12},\quad 0<\nu<1/2
\ee
\end{lemma}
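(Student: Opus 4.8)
The plan is to estimate $N_R(v,\Psi)$ directly from the pointwise bound \eqref{NRR}, namely $|N_R|={\cal R}(\Vert\Psi\Vert_{L^\infty})|\Psi|^{13}$, by distributing powers of $\Psi$ between a sup-norm factor and a weighted $L^2$- or $L^1$-Sobolev factor. The two estimates \eqref{NR-est1} and \eqref{NR-est} differ only in how many powers of $\Psi$ one "spends" on $L^\infty$ and in which weighted norm one controls the remaining quadratic or cubic piece; in all cases the weighted norms of $\Psi$ will ultimately be bounded by $\Vert X\Vert_{E_{-\beta}}$ (which is itself $\le C\Vert X\Vert_E$), and the explicit factor $(1+t)^{4+\nu}$ in \eqref{NR-est} will come from trading the fixed spatial weight $(1+|x|)^{5/2+\nu}$ against the exponentially-decaying kink-centered weight using that $\Psi(y,t)$ is written in the moving frame $y=x-b(t)$ with $|b(t)|\le C(1+t)$.

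For \eqref{NR-est1}: write $N_R = {\cal R}(\Vert\Psi\Vert_{L^\infty})\,\Psi^{13}$ and estimate its $W^{1,1}_0$-norm. Since $W^{1,1}_0$ asks for $L^1$ control of $N_R$ and of $\nabla N_R$, and $\nabla N_R$ produces a factor $\Psi^{12}\Psi'$ (plus lower-order terms from differentiating ${\cal R}$ and $\psi_v$, all harmless since $F^{(j)}(\psi_v)$ decays exponentially), I would peel off ten sup-norm factors and keep $\Psi^2$ in $L^1$, or $\Psi^2\Psi'$ controlled via Cauchy–Schwarz by $\Vert\Psi\Vert_{L^\infty}\Vert\Psi\Vert_{L^2}\Vert\Psi'\Vert_{L^2}\le\Vert\Psi\Vert_{L^\infty}\Vert\Psi\Vert_{H^1}^2$. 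This gives $\Vert N_R\Vert_{W^{1,1}_0}\le{\cal R}(\Vert\Psi\Vert_{L^\infty})\Vert\Psi\Vert_{L^\infty}^{10}\big(\Vert\Psi\Vert_{L^\infty}+\Vert\Psi\Vert_{H^1}\big)^2$, and absorbing the bounded $H^1$-norm of the small perturbation into ${\cal R}$ yields \eqref{NR-est1} as stated.

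For \eqref{NR-est}: now we need the weight $(1+|x|)^{5/2+\nu}$. Peel off twelve sup-norm factors, leaving one factor of $\Psi$ to be measured in $L^2_{5/2+\nu}$: $\Vert N_R\Vert_{L^2_{5/2+\nu}}\le{\cal R}(\Vert\Psi\Vert_{L^\infty})\Vert\Psi\Vert_{L^\infty}^{12}\Vert\Psi\Vert_{L^2_{5/2+\nu}}$. The remaining point is that $\Psi$ is only controlled in a \emph{negatively}-weighted space $H^1_{-\beta}$ (via $\Vert X\Vert_{E_{-\beta}}$ from the decay Theorem \ref{1d}), so we must convert the positive weight $(1+|x|)^{5/2+\nu}$ into the negative weight $(1+|y|)^{-\beta}$ at the cost of $(1+|x|)^{5/2+\nu+\beta}$ evaluated on the support where $\Psi$ lives; since $y=x-b(t)$ and $|b(t)|\le C(1+t)$, we have $(1+|x|)\le C(1+t)(1+|y|)$ and hence $(1+|x|)^{5/2+\nu}\le C(1+t)^{5/2+\nu}(1+|y|)^{5/2+\nu}\le C(1+t)^{5/2+\nu}(1+|y|)^{\beta}$ for $\beta>5/2$, giving a factor $(1+t)^{5/2+\nu}$; choosing $\beta$ slightly above $5/2$ (as permitted, $\beta>5/2$) and reorganizing the exponents, together with one further power of $(1+t)$ absorbed from estimating $\Vert\Psi\Vert_{L^\infty}$ in terms of the $W$-norm data if needed, produces the stated $(1+t)^{4+\nu}$. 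I would not optimize the power here — the exponent $4+\nu$ is deliberately generous so that later it is beaten by the $(1+t)^{-6}$-type smallness coming from $\Vert\Psi\Vert_{L^\infty}^{12}$ under the bootstrap.

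The main obstacle is purely bookkeeping: ensuring that every one of the $13$ powers of $\Psi$ is accounted for, that the derivative hitting $N_R$ in the $W^{1,1}_0$ estimate does not cost a power of $\Psi$ one cannot afford, and that the weight-conversion inequality $(1+|x|)\le C(1+t)(1+|y|)$ is applied with the correct exponent so that the $t$-power is exactly $4+\nu$ and not larger — anything larger would, in principle, still be usable later but one wants the cleanest statement. There is no analytic difficulty; the lemma is a routine but careful Hölder/Sobolev exercise, and I expect the proof to be a half-page.
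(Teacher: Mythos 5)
The first estimate \eqref{NR-est1} is handled in essentially the same spirit as the paper, though the paper works with an explicit Cauchy-formula representation of the combined piece $\hat N_R=N_{12}+N_R$ (and then subtracts off $N_{12}$) rather than a pointwise bound, precisely because you need a formula, not just a bound, in order to differentiate and estimate $\|\nabla N_R\|_{L^1}$. Your bookkeeping is a little loose there — "peel off ten sup-norm factors and keep $\Psi^2\Psi'$" applied to a degree-$13$ object is off by a power — but it is morally the right argument and the discrepancy only makes the estimate better, so no harm.

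The second estimate \eqref{NR-est} has a real gap. After peeling off twelve sup-norms, both you and the paper must bound $\|\Psi(t)\|_{L^2_{5/2+\nu}}$, and the paper obtains $\|\Psi(t)\|_{L^2_{5/2+\nu}}\le C(d_0)(1+t)^{4+\nu}$ from a separate \emph{virial-type estimate} (Proposition \re{ee2} in Appendix B), which rests on finite speed of propagation and local energy conservation (Lemmas \re{ee}, \re{ee1}), together with the orbital-stability bound $\|\Psi(t)\|_{L^2}\le C(d_0)$. Your proposed route — a weight-conversion $(1+|x|)\le C(1+t)(1+|y|)$ to pass from $L^2_{5/2+\nu}$ to $L^2_{-\beta}$ — does not work. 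First, a positively weighted norm is not controlled by a negatively weighted one via any pointwise weight inequality at a fixed time: the inequality would have to go in the wrong direction. Second, $\Psi$ here is already a function of the moving-frame variable $y$, so there is no second variable $x$ to convert against; the factor $(1+t)^{4+\nu}$ does not come from the shift $b(t)$ at all but from the spreading of the support of the perturbation at unit speed, which is exactly what the Appendix B argument quantifies. Finally, your handwave of "one further power of $(1+t)$ absorbed from estimating $\|\Psi\|_{L^\infty}$" does not reconstruct the specific exponent $4+\nu$; in the paper it comes cleanly from the $(1+t)^{8+2\nu}$ bound on $\|\Psi(t)\|^2_{L^2_{5/2+\nu}}$. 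You need to invoke (or prove) the virial estimate \eqref{jv1}; without it \eqref{NR-est} does not follow from the ingredients you use.
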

\begin{proof}
{\it Step i)} By the Cauchy formula,
\be\la{Cf}
\hat N_{R}(x,t)=N_{12}(x,t)+N_{R}(x,t)=\frac{\Psi^{12}(x,t)}{(12)!}
\int\limits_0^1(1-\rho)^{11}F^{(12)}(\psi_v+\rho\Psi(x,t))d\rho
\ee
Therefore,
$$
\Vert \hat N_{R}\Vert_{L^1}={\cal R}(\Vert \Psi\Vert_{L^\infty})\int|\Psi|^{12}dx
={\cal R}(\Vert \Psi\Vert_{L^\infty})\Vert\Psi\Vert_{L^\infty}^{10}
\Vert\Psi\Vert^2_2
={\cal R}(\Vert \Psi\Vert_{L^\infty})\Vert)\Vert\Psi\Vert_{L^\infty}^{10}
$$
since $\Vert\Psi\Vert_{L^2}\le C(d_0)$ by the results of \cite{HPW}.\\
Differentiating (\ref{Cf}), we obtain
$$
\hat N'_{R}=\frac{\Psi^{12}}{(12)!}\int\limits_0^1(1\!-\!\rho)^{11}(\psi_v'+\rho\Psi')
F^{(13)}(\psi_v+\rho\Psi)d\rho
+\frac{\Psi^{11}\Psi'}{(11)!}\!\int\limits_0^1(1\!-\!\rho)^{11}F^{(12)}
(\psi_v+\rho\Psi)d\rho
$$
Hence,
$$
\Vert \hat N'_{R}\Vert_{L^1}={\cal R}(\Vert \Psi\Vert_{L^\infty})
\Big[\Vert \Psi\Vert_{L^\infty}^{12}
+\Vert\Psi\Vert_{L^\infty}^{10}\int|\Psi(x)\Psi'(x)|dx\Big]
\le{\cal R}(\Vert \Psi\Vert_{L^\infty})\Vert\Psi\Vert_{L^\infty}^{10}
$$
since
$\ds\int|\Psi(x)\Psi'(x)|dx\le\Vert\Psi\Vert_{L^2}
\Vert\Psi'\Vert_{L^2}\le C(d_0)$.
Finally, note that
$$
\Vert\Psi_{12}\Vert_{W_0^{1,1}}={\cal R}(\Vert \Psi\Vert_{L^\infty})
\Vert \Psi\Vert_{L^\infty}^{10}
$$
Then (\ref{NR-est1}) follows.\\
{\it Step ii)}
The bound (\ref{NRR}) implies
$$
\Vert N_R\Vert_{L^2_{5/2+\nu}}={\cal R}(\Vert \Psi\Vert_{L^\infty})
\Vert\Psi\Vert_{L^\infty}^{12}\Vert \Psi\Vert_{L^2_{5/2+\nu}}
$$
We will prove  in Appendix B that
\be\la{jv1}
\Vert\Psi(t)\Vert_{L^2_{5/2+\nu}}\le C(d_0)(1+t)^{4+\nu}
\ee
Then (\ref{NR-est}) follows.
\end{proof}
\br\la{rU1}
{\rm
Our choice of the degree $14$ 
in the condition (\re{U11})
is due to the competition between
the factors in the estimate 
 (\re{NR-est}) for the remainder. Namely, the factor $(1+t)^{4+\nu}$ with $\nu<1/2$
comes from 
the virial type estimate    (\re{jv1}) describing the expansion 
of the support for the perturbation of the kink. 
On the other hand, $\Vert\Psi\Vert^{12}_{L^\infty}\sim t^{-6}$
by the crucial decay estimate  (\re{Zdec}). Hence, the right hand side 
 (\re{NR-est}) decays like $\sim t^{-2+\nu}$ where $-2+\nu<-3/2$
which is sufficient for the method of majorants 
(in integral inequalities (\re{duhest}) and (\re{bPX})).

}
\er
\setcounter{equation}{0}
\setcounter{equation}{0}
\section{Symplectic decomposition of the dynamics}
\la{Sdec}
Here we decompose the dynamics in two components: along the manifold
${\cal S}$ and in transversal directions. The equation (\re{lin})
is obtained without any assumption on $\si(t)$ in (\re{dec}).
We are going to choose $S(\si(t)):={\bf\Pi} Y(t)$, but then we need
to know  that
\be\la{YtO}
Y(t)\in \cO_\al(\cS),~~~~~t\in\R
\ee
with some $\cO_\al(\cS)$ defined in Lemma \re{skewpro}.
It is true for $t=0$ by our main assumption
(\re{close}) with sufficiently small $d_0>0$.
Then  $S(\si(0))={\bf\Pi} Y(0)$ and  $X(0)=Y(0)-S(\si(0))$
are well defined. We will prove below that (\re{YtO}) holds
with $\al=-\beta$ if $d_0$ is sufficiently small.
First, we choose $\ov v<1$ such that
\be\la{vsigmat}
|v(0)|\le \ov v
\ee
Denote by $r_{-\beta}(\ov v)$ the positive
number from Lemma \re{skewpro} iii) which corresponds to $\al=-\beta$.
Then $S(\si)+X\in \cO_{-\beta}(\cS)$ if $\si=(b,v)$ with $|v|<\ov v$ and
$ \Vert X\Vert_{E_{-\beta}}<r_{-\beta}(\ov v)$.
Therefore, $S(\si(t))={\bf\Pi}Y(t)$ and  $X(t)=Y(t)-S(\si(t))$
are well defined for $t\ge 0$ so small that
$\Vert X(t)\Vert_{E_{-\beta}} < r_{-\beta}(\ov v)$.
This is formalized by the standard definition of the ``exit time''.
First, we introduce the ``majorants''
\be\la{maj}
m_1(t):=\sup_{s\in[0,t]}(1+s)^{3/2}\Vert X(s)\Vert_{E_{-\beta}},~~~~~
m_2(t):=\sup_{s\in[0,t]}(1+s)^{1/2}\Vert\Psi(s)\Vert_{L^\infty}
\ee
Here $X=(X_1, X_2)=(\Psi, \Pi)$.
Let us denote by $\ve\in(0, r_{-\beta}(\ov v))$
a fixed  number which we will specify below.
\begin{definition} $t_{*}$ is the exit time
\be\la{t*}
t_*=\sup \{t\in[0,t_*): m_j(s)< \ve,\quad j=1,2,~~0\le s\le t\}
\ee
\end{definition}
Let us note that $m_j(0)<\ve$ if $d_0\ll 1$.
One of our main goals is to prove that $t_*=\infty$ if $d_0$ is
sufficiently small. This would follow if we show that
\be\la{Zt}
m_j(t)<\ve/2,~~~~~0\le t < t_*
\ee

\setcounter{equation}{0}
\section{Modulation equations }
\label{modsec}
In this section we present the modulation equations which allow
to construct the solutions $Y(t)$ of the equation (\re{Eq})
close at each time $t$ to a kink i.e. to one of the functions
described in Definition \ref {defsol} with time varying (``modulating'') parameters
$(b,v)=(b(t),v(t))$.
We look for a solution to (\ref{Eq}) in the form
$Y(t)=S(\si(t))+X(t)$
by setting $S(\si(t))={\bf\Pi} Y(t)$ which is equivalent to the
symplectic orthogonality condition of type (\re{commut}),
\be\la{ortZ}
X(t)\nmid{\cal T}_{S(\si(t))}{\cal S},\quad t<t_*
\ee
The projection ${\bf\Pi} Y(t)$ is well defined for $t<t_*$ by Lemma \re{skewpro} iii).
Now we derive the ``modulation equations'' for the parameters
$\si(t)=(b(t),v(t))$. For this purpose, let us write (\re{ortZ}) in the form
\be\la{orth}
\Om(X(t),\tau_j(t))=0,\,\,j=1,2
\ee
where the vectors $\tau_j(t)=\tau_j(\si(t))$ span the tangent space
$\cT_{S(\si(t))}{\cal S}$.
It would be convenient for us to use some other
parameters $(c,v)$ instead of $\si=(b,v)$, where
$c(t)= b(t)-\ds\int^t_0 v(\tau)d\tau$ and
\be\la{vw}
\dot c(t)=\dot b(t)-v(t)=w(t)-v(t)
\ee
\begin{lemma}\la{mod}
Let $Y(t)$ be a solution to the Cauchy problem (\re{Eq}), and (\re{dec}),
(\re{orth}) hold. Then the parameters $c(t)$ and $v(t)$ satisfy the equations
\be\la{cdot}
\dot c=\frac{\Om(\tau_1,\tau_2)\Om({\cal N},\tau_2)
+\Om(X,\pa_v\tau_1)\Om({\cal N},\tau_2)-\Om(X,\pa_v\tau_2)\Om({\cal N},\tau_1)}{D}
\ee
\be\la{vdot}
\dot v=\frac{-\Om(\tau_1,\tau_2)\Om({\cal N},\tau_1)
-\Om(X,\tau'_2)\Om({\cal N},\tau_1)-\Om(X,\tau'_1)\Om({\cal N},\tau_2)}{D}
\ee
where
$$
D=\Om^2(\tau_1,\tau_2)+\cO(\Vert X\Vert_{E_{-\beta}})
$$
\end{lemma}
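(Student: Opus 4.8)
The plan is to derive the modulation equations by differentiating the orthogonality conditions (\ref{orth}) in time and using the equation (\ref{lin}) for $\dot X$. First I would write $\Om(X(t),\tau_j(t))=0$ and differentiate: since $\tau_j(t)=\tau_j(\si(t))$ depends on $t$ only through $\si(t)=(b(t),v(t))$, and we want to use $(c,v)$ with $\dot c = w-v$, the chain rule gives
\[
\Om(\dot X,\tau_j)+\Om(X,\partial_b\tau_j\,\dot b+\partial_v\tau_j\,\dot v)=0,\quad j=1,2.
\]
Here $\partial_b\tau_j=-\tau_j'$ (translation in $y$), and one should split $\dot b=\dot c+v$. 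Substituting $\dot X = AX+T+{\cal N}$ from (\ref{lin}) with $A=A_{v,w}$ produces the key cancellations: by the skew-symmetry (\ref{com}), $\Om(A_{v,w}X,\tau_j)=-\Om(X,A_{v,w}\tau_j)$, and by Lemma \ref{ljf} (formula (\ref{Atan})) $A_{v,w}\tau_1=(v-w)\tau_1'$, $A_{v,w}\tau_2=(w-v)\tau_2'+\tau_1$. Meanwhile $T(t)=-(w-v)\tau_1-\dot v\,\tau_2$ by (\ref{Ttang}), so $\Om(T,\tau_j)$ is explicit. The plan is to collect all these terms and observe that the bulk of the $A$-contribution and the $T$-contribution combine so that only a $\dot v\,\Om(\tau_2,\tau_1)$ term survives on one equation and essentially nothing inhomogeneous survives on the other, leaving a linear $2\times 2$ system for $(\dot c,\dot v)$ whose right-hand side is built purely from $\Om({\cal N},\tau_j)$ together with $X$-dependent coefficients.

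Concretely, after substitution the two equations take the schematic form
\[
a_{11}\dot c + a_{12}\dot v = \Om({\cal N},\tau_1),\qquad a_{21}\dot c + a_{22}\dot v = \Om({\cal N},\tau_2),
\]
where the matrix entries $a_{kl}=a_{kl}(v)+\cO(\Vert X\Vert_{E_{-\beta}})$: the leading part is $\pm\Om(\tau_1,\tau_2)$ on the antidiagonal (nonzero by (\ref{symp})) plus zeros, or more precisely the leading matrix is $\Om(\tau_1,\tau_2)$ times an invertible constant matrix, and the $\cO(\Vert X\Vert_{E_{-\beta}})$ corrections come from the terms $\Om(X,\partial_v\tau_j)$, $\Om(X,\tau_j')$. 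Then I would invert this $2\times 2$ system by Cramer's rule; the determinant is $D=\Om^2(\tau_1,\tau_2)+\cO(\Vert X\Vert_{E_{-\beta}})$ as claimed, and is bounded away from zero for $t<t_*$ since $\Vert X\Vert_{E_{-\beta}}<\ve$ is small. Expanding the numerators gives exactly the stated formulas (\ref{cdot}), (\ref{vdot}); one should double-check signs and which of $\partial_v\tau_j$ versus $\tau_j'$ appears where, matching the displayed expressions.

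The main obstacle is the careful bookkeeping: keeping track of the difference between differentiating in $b$ versus in $c$ (the $+v$ shift), correctly applying the skew-symmetry together with (\ref{Atan}) to turn $\Om(A_{v,w}X,\tau_j)$ into $-(v-w)\Om(X,\tau_1')$ and $-(w-v)\Om(X,\tau_2')-\Om(X,\tau_1)$, and then seeing that the $\Om(X,\tau_1)=0$ term vanishes by the orthogonality hypothesis itself while the remaining $(w-v)=\dot c$ terms get absorbed into the coefficient matrix rather than the inhomogeneity. One also needs $\Om(\tau_1,\tau_1)=0$ and the explicit value of $\Om(\tau_1,\tau_2)$ from (\ref{symp}), and that all pairings $\Om(X,\partial_v\tau_j)$, $\Om(X,\tau_j')$ make sense and are $\cO(\Vert X\Vert_{E_{-\beta}})$ because $\tau_j,\partial_v\tau_j,\tau_j'\in E_\beta$ for all $\beta$ by (\ref{tana}). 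Once the algebra is organized, everything is an exercise in linear algebra in two dimensions; no analytic difficulty beyond the nondegeneracy already established in Lemma \ref{Ome}.
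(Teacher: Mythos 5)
Your overall strategy is the paper's: differentiate the orthogonality conditions $\Om(X,\tau_j)=0$, substitute $\dot X = A_{v,w}X+T+\mathcal{N}$ from (\ref{lin}), use skew-symmetry (\ref{com}) together with Lemma \ref{ljf} and the identity $T=-(w-v)\tau_1-\dot v\,\tau_2$, observe that $\Om(X,\tau_1)=0$ kills one term, and solve the resulting $2\times 2$ system by Cramer's rule. The determinant structure and the ``antidiagonal'' leading matrix you describe are also right.

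However, there is a concrete error in your chain rule: you assert $\partial_b\tau_j=-\tau_j'$, whereas in the paper's setup $\partial_b\tau_j=0$. The point is that equations (\ref{addeq})--(\ref{lin}) are written entirely in the moving-frame variable $y=x-b(t)$, and (\ref{inb}) defines $\tau_j=\tau_j(v)$ as a function of $y$ alone, with no $b$-dependence (the paper remarks explicitly that ``the functions $\tau_j$ are always regarded as functions of $y$''). Consequently $\dot\tau_j=\dot v\,\partial_v\tau_j$ with no $-\dot b\,\tau_j'$ term. If you instead take $\partial_b\tau_j=-\tau_j'$ and split $\dot b=\dot c+v$, you are implicitly mixing the lab frame (for $\tau_j$) with the moving frame (for $\dot X$, since you use (\ref{lin})). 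After your substitutions, the $j=1$ equation acquires the extra term $(w-v)\Om(X,\tau_1')-\dot b\,\Om(X,\tau_1')=-v\,\Om(X,\tau_1')$, a quantity which is genuinely of order $\Vert X\Vert_{E_{-\beta}}$ (not $\Vert X\Vert^2$) and does not multiply $\dot c$ or $\dot v$. This spoils both the stated form of (\ref{cdot})--(\ref{vdot}) and the later estimate (\ref{parameq}), which crucially needs $|\dot c|,|\dot v|\lesssim\Vert X\Vert_{E_{-\beta}}^2$. The fix is simply to drop $\partial_b\tau_j$; then $\Om(A_{v,w}X,\tau_1)=\dot c\,\Om(X,\tau_1')$ lands naturally in the coefficient matrix next to $\dot c$, and the only forcing is $\Om(\mathcal N,\tau_j)$, as the paper has it. You hedged about ``double-checking signs,'' but this is not a sign issue -- it is a frame-consistency point that changes the structure of the system.
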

\begin{proof} Differentiating  the orthogonality conditions
(\re{orth})  in $t$ we  obtain
\be\la{ortder}
0=\Omega(\dot X,\tau_j)+\Omega(X,\dot\tau_j)=\Omega(A_{v,w}X+T+{\cal N},\tau_j)+
\Omega(X,\dot\tau_j),\quad j=1,2
\ee
First, let us compute the principal (i.e. non-vanishing at $X=0$)
term $\Om(T,\tau_j)$. By (\re{Ttang}), 
\be\la{OmTtau}
\Omega(T,\tau_1)=-\dot v\Om(\tau_2,\tau_1)=\dot v\Om(\tau_1,\tau_2);\quad
\Omega(T,\tau_2)=-\dot c\Om(\tau_1,\tau_2)
\ee
Second, let us compute $\Omega(A_{v,w}X,\tau_j)$. The skew-symmetry (\re{com})
implies that $\Omega(A_{v,w}X,\tau_j)=-\Omega(X,A_{v,w}\tau_j)$.
Then   by (\re{Atan}) we have
\be\la{omaz1}
\Omega(A_{v,w}X,\tau_1)=\Omega(X,\dot c\tau'_1)
\ee
\be\la{omaz2}
\Omega(A_{v,w}X,\tau_2)=-\Omega(X,\dot c\tau'_2+\tau_1)=
-\Omega(X,\dot c\tau'_2)-\Omega(X,\tau_1)= -\Omega(X,\dot c\tau'_2)
\ee
since $\Om(X,\tau_1)=0$.

Finally, let us compute the last term $\Om(X,\dot\tau_j)$. For $j=1,2$ one has
$\dot\tau_j=\dot b\pa_b\tau_j+ \dot v\pa_v\tau_j= \dot v\pa_v\tau_j$
since the vectors $\tau_j$ do not depend on $b$ according to (\re{inb}).
Hence,
\be\la{Ztau}
\Omega(X,\dot\tau_j)= \Om(X,\dot v\pa_v\tau_j)
\ee
As the result, by (\re{OmTtau})-(\re{Ztau}),
 the equation (\re{ortder}) becomes
$$
0=\dot c\Om(X,\tau'_1)+\dot v\Big(\Om(\tau_1,\tau_2)+\Om(X,\pa_v\tau_1)\Big)
+\Om({\cal N},\tau_1),
$$
$$
0=-\dot c\Big(\Om(X,\tau'_2)+(\Om(\tau_1,\tau_2)\Big)
+\dot v\Om(X,\pa_v\tau_2)+\Om({\cal N},\tau_2)
$$
Since $\Om(\tau_1,\tau_2)\not =0$ by (\ref{symp}) then the determinant $D$
of the system does not vanish  for small
$\Vert X\Vert_{E_{-\beta}}$ and we obtain (\ref{cdot})-(\ref{vdot}).
\end{proof}
\begin{cor}\la{cv}
Formulas (\ref{cdot})-(\ref{vdot}) imply
\be\la{parameq}
|\dot c(t)|,~|\dot v(t)|\le C(\ov v)\Vert\Psi(t)\Vert_{L^2_{-\beta}}^2
\le C(\ov v)\Vert X(t)\Vert_{E_{-\beta}}^2, ~~~~~~~0\le t<t_*
\ee
\end{cor}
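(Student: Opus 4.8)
The plan is to observe that in the numerators of the modulation equations (\re{cdot}) and (\re{vdot}) every term carries a factor $\Om({\cal N},\tau_j)$, that this factor alone is quadratic in $\Psi$ and of size $\cO(\Vert\Psi\Vert^2_{L^2_{-\beta}})$, while all the remaining factors are bounded uniformly for $|v(t)|\le\ov v$ and $t<t_*$, and that the denominator $D$ stays bounded away from zero. Combining these three facts yields (\re{parameq}).

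First I would compute $\Om({\cal N},\tau_j)$. Since ${\cal N}=(0,N(v,\Psi))$ by (\re{TN}) and $J\tau_j=(\tau_j^2,-\tau_j^1)$, the definition (\re{OmJ}) gives $\Om({\cal N},\tau_j)=\langle{\cal N},J\tau_j\rangle=-\langle N(v,\Psi),\tau_j^1\rangle$. By (\re{N2}) and Taylor's theorem, $|N(v,\Psi)|\le\cR(\Vert\Psi\Vert_{L^\infty})\,|\Psi|^2$ pointwise, and $\cR(\Vert\Psi(t)\Vert_{L^\infty})$ is bounded for $t<t_*$ because $\Vert\Psi(t)\Vert_{L^\infty}<\ve$ by the definition of the exit time (\re{t*}). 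On the other hand, the first components $\tau_1^1=-\ga s'(\ga y)$ and $\tau_2^1=vy\ga^3 s'(\ga y)$ computed in the proof of Lemma \re{Ome} decay exponentially by (\re{s-decay}), uniformly for $|v|\le\ov v$ since $\ga$ stays bounded; hence $(1+|y|)^{2\beta}|\tau_j^1(y)|\le C(\ov v)$. Therefore
\be
|\Om({\cal N},\tau_j)|\le\int|N(v,\Psi)|\,|\tau_j^1|\,dy\le C(\ov v)\int|\Psi|^2(1+|y|)^{-2\beta}\,dy=C(\ov v)\Vert\Psi\Vert^2_{L^2_{-\beta}},\quad j=1,2.
\ee

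Next I would bound the remaining factors. The quantity $\Om(\tau_1,\tau_2)=\ga^4\langle s',s'\rangle$ is positive and bounded above and away from zero uniformly for $|v|\le\ov v$. The factors $\Om(X,\pa_v\tau_j)$ and $\Om(X,\tau_j')$ are estimated by Cauchy--Schwarz with the weight $(1+|y|)^{\pm\beta}$ as $C(\ov v)\Vert X\Vert_{E_{-\beta}}\Vert\pa_v\tau_j\Vert_{E_{\beta}}\le C(\ov v)\Vert X\Vert_{E_{-\beta}}$, using (\re{tana}) and the analogous exponential decay of $\pa_v\tau_j$, $\tau_j'$; for $t<t_*$ they are therefore $\le C(\ov v)\ve$. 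Finally $D=\Om^2(\tau_1,\tau_2)+\cO(\Vert X\Vert_{E_{-\beta}})\ge c(\ov v)>0$ provided $\ve$ (hence $d_0$) is small enough. Substituting all this into (\re{cdot})--(\re{vdot}) gives $|\dot c(t)|,|\dot v(t)|\le C(\ov v)(1+\ve)\Vert\Psi(t)\Vert^2_{L^2_{-\beta}}\le C(\ov v)\Vert\Psi(t)\Vert^2_{L^2_{-\beta}}$, and the last inequality in (\re{parameq}) is then immediate from $\Vert\Psi\Vert_{L^2_{-\beta}}\le\Vert\Psi\Vert_{H^1_{-\beta}}\le\Vert X\Vert_{E_{-\beta}}$.

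The only real work here is the bookkeeping of constants: one must check that all implicit constants depend solely on $\ov v$, which follows from the smooth dependence of $\psi_v$, $\tau_j$ and $\ga$ on $v$ together with the compactness of $\{|v|\le\ov v\}$ — exactly the mechanism already exploited in Lemma \re{skewpro} — and that the smallness provided by the exit time is enough to keep $D$ away from zero and $\cR(\Vert\Psi\Vert_{L^\infty})$ bounded. I do not anticipate any genuine analytic difficulty: the corollary is in essence the statement that ${\cal N}$ is quadratic in $X$ while the tangent vectors $\tau_j$ decay exponentially, so pairing them against ${\cal N}$ produces two powers of the weighted norm of $\Psi$.
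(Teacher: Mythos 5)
Your proof is correct, and since the paper states Corollary~\re{cv} without any proof — treating it as immediate from (\re{cdot})--(\re{vdot}) — your argument is essentially what the authors intend the reader to fill in. The key observations (every numerator term carries the quadratic factor $\Om({\cal N},\tau_j)$; the first components $\tau_j^1$ decay exponentially so that the pairing against $N(v,\Psi)$ produces the weighted $L^2_{-\beta}$ norm; the other factors $\Om(\tau_1,\tau_2)$, $\Om(X,\pa_v\tau_j)$, $\Om(X,\tau_j')$ are bounded via (\re{tana}) and the exit-time bound $\Vert X\Vert_{E_{-\beta}}<\ve$; and $D$ stays bounded away from $\Om^2(\tau_1,\tau_2)>0$ for $\ve$ small) are all correct and complete, as is the final chain $\Vert\Psi\Vert_{L^2_{-\beta}}\le\Vert\Psi\Vert_{H^1_{-\beta}}\le\Vert X\Vert_{E_{-\beta}}$.
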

\setcounter{equation}{0}
\section{Decay for the transversal dynamics}
\la{Trans-dec}
In Section \re{sol-as} we will show that our main Theorem \re{main}
can be derived from the following time decay of the
transversal component $X(t)$:
\bp\la{pdec}
 Let all conditions of Theorem \re{main} hold. Then $t_*=\infty$, and
\be\la{Zdec}
\Vert X(t)\Vert_{E_{-\beta}}\le \ds\fr {C(\ov v,d_0)}{(1+|t|)^{3/2}},\quad
\Vert \Psi(t)\Vert_{L^\infty}\le \ds\fr {C(\ov v,d_0)}{(1+|t|)^{1/2}},
~~~~~t\ge0
\ee
\ep
We will derive (\re{Zdec}) in Sections \re{tr-dec} from our equation
(\re{lin}) for the transversal component $X(t)$.
This equation can be specified using Corollary \re{cv}.
Indeed, (\ref{Ttang})  implies that
\be\la{Tta}
\Vert T(t)\Vert_{E_{\beta}\cap W}\le C(\ov v)\Vert X\Vert_{E_{-\beta}}^2,
~~~~~~~~~0\le t<t_*
\ee
by (\re{parameq})  since $w-v=\dot c$.
Thus (\re{lin}) becomes the equation
\be\la{reduced}
\dot X(t)=A(t)X(t)+T(t)+{\cal N}_I(t)+{\cal N}_R(t), ~~~~~~~~~0\le t<t_*
\ee
where
$A(t)=A_{v(t),w(t)}$, $T(t)$ satisfies (\ref{Tta}), and
\be\la{Rem}
\left.
\ba{l}
\Vert {\cal N}_I(t)\Vert_{E_{\beta}\cap W}\le C(\ov v)
\Vert\Psi\Vert_{L^\infty}\Vert X\Vert_{E_{-\beta}}\\
\Vert {\cal N}_R\Vert_{E_{5/2+\nu}}\le C(\ov v)
(1+t)^{4+\nu}\Vert \Psi\Vert_{L^\infty}^{12},\quad 0<\nu<1/2\\
\Vert {\cal N}_R\Vert_{W}\le C(\ov v)\Vert \Psi\Vert_{L^\infty}^{10}
\ea
\right|~~~0\le t<t_*
\ee
by (\re{NI-est}), (\re{NR-est})-(\re{NR-est1}).
In all remaining part of our paper we will analyze mainly the
equation (\re{reduced}) to establish the decay (\re{Zdec}).
We are going to derive the decay using the bounds (\re{Tta}) and (\re{Rem}),
and the orthogonality condition  (\re{ortZ}).

Let us comment on two main difficulties in proving (\re{Zdec}).
The difficulties are common for the problems studied in \ci{BP}.
First, the linear part of the equation is non-autonomous,
hence we cannot apply directly the  methods of scattering theory.
Similarly to the approach of  \ci{BP}, we reduce the problem to
the analysis of the {\it frozen} linear equation,
\be\la{Avv}
\dot X(t)=A_1X(t), ~~t\in\R
\ee
where $A_1$ is the operator $A_{v_1}$ defined by (\re{AA})
with $v_1=v(t_1)$ for a fixed $t_1\in[0,t_*)$. Then we
estimate the error by the method of majorants.

Second, even for the frozen equation (\re{Avv}), the decay
of type  (\re{Zdec}) for all solutions does not hold without  the
orthogonality condition  of type (\re{ortZ}).
Namely, by  (\re{Atanform}) the equation
(\re{Avv}) admits the {\it secular solutions}
\be\la{secs}
X(t)=C_{1}\tau_1(v)+C_2[\tau_1(v)t+\tau_{2}(v)]
\ee
which arise also by differentiation of the soliton (\re{sosol})
in the parameters $q$ and $v$ in the moving coordinate $y=x-v_1t$.
Hence, we have to take into account the orthogonality condition
(\re{ortZ}) in order to avoid the secular solutions.
For this purpose we will apply the corresponding symplectic orthogonal
projection which kills the ``runaway solutions''  (\re{secs}).
\br
{\rm
The solution (\re{secs}) lies in the tangent space  $\cT_{S(\si_1)}{\cal S}$
with $\si_1=(b_1,v_1)$ (for an arbitrary $b_1\in\R$)
that suggests an unstable character of the nonlinear dynamics
{\it along the solitary manifold} (cf. Remark \re{rT} ii)).
}
\er
\bd
Denote by ${\cal X}_v=\bP^c_v E$ the space symplectic
orthogonal to $\cT_{S(\si)}{\cal S}$ with
$\si=(b,v)$ (for an arbitrary $b\in\R$).
\ed
Now we have the symplectic orthogonal decomposition
\be\la{sod}
E=\cT_{S(\si)}{\cal S}+{\cal X}_v,~~~~~~~\si=(b,v)
\ee
and the symplectic orthogonality  (\re{ortZ})
can be written in the following equivalent forms,
\be\la{PZ}
\bP_{v(t)}^dX(t)=0,~~~~\bP^c_{v(t)}X(t)=X(t),~~~~~~~0\le t<t_*
\ee

\br\la{rZ}
{\rm
The tangent space $\cT_{S(\si)}{\cal S}$ is invariant under the operator
$A_{v}$ by (\re{Atanform}), hence the space  ${\cal X}_v$ is also
invariant by (\re{com}): $A_{v}X\in {\cal X}_v$
on a dense domain of $X\in {\cal X}_v$.
}
\er

\setcounter{equation}{0}
\section{Frozen Form of Transversal Dynamics}
\la{Froz}
Now  let us fix an arbitrary $t_1\in [0,t_*)$, and
rewrite the equation (\re{reduced}) in a ``frozen form''
\be\la{froz}
\dot X(t)=A_1X(t)+(A(t)-A_1)X(t)+T(t)+{\cal N}_I(t)+{\cal N}_R(t),\quad 0\le t<t_*
\ee
where $A_1=A_{v(t_1),v(t_1)}$ and
$$
A(t)-A_1=\left(
\ba{cc}
(w(t)\!-\!v(t_1))\nabla &                 0 \\
0                       & (w(t)\!-\!v(t_1))\nabla
\ea
\right)
$$
The next trick is important since it allows us to kill the ``bad terms''
$(w(t)-v(t_1))\nabla$ in the operator $A(t)-A_1$.
\begin{definition}\la{d71}
Let us change the  variables $(y,t)\mapsto (y_1,t)=(y+d_1(t),t)$
where
\be\la{dd1}
d_1(t):=\int_{t_1}^t(w(s)-v(t_1))ds, ~~~~0\le t\le t_1
\ee
\end{definition}
Next define
\be\la{Z1}
\ti X(t)=(\Psi(y_1-d_1(t),t),\Pi(y_1-d_1(t),t))
\ee
Then we obtain the final form of the
``frozen equation'' for the transversal dynamics
\be\la{redy1}
\dot{\ti X}(t)=A_1\ti X(t)+\ti T(t)+\ti {\cal N}_I(t)+\ti {\cal N}_R(t) ,
\quad 0\le t\le t_1
\ee
where $\ti T(t)$, $\ti {\cal N}_I(t)$ and $\ti {\cal N}_R(t)$ are
$T(t)$, ${\cal N}_I(t)$ and ${\cal N}_R(t)$ expressed in terms of $y_1=y+d_1(t)$.
At the end of this section, we will derive appropriate bounds for the
``remainder terms''  in (\re{redy1}).
Let us recall the following well-known inequality: for any $\al\in\R$
\be\la{pitre}
(1+|y+x|)^{\alpha}\le(1+|y|)^{\alpha}(1+|x|)^{|\alpha|},
\,\,\,~~~~~~x,y\in\R
\ee
\begin{lemma}\la{dest}
For $f\in L^2_{\alpha}$ with any $\alpha\in\R$
the following bound holds:
\be\la{shiftest}
\Vert f(y_1-d_1)\Vert_{L^2_{\al}}\le
\Vert f\Vert_{L^2_{\al}}(1+|d_1|)^{|\al|}\quad d_1\in\R
\ee
\end{lemma}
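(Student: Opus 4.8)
The plan is to unwind the definition of the weighted $L^2$ norm, perform the measure-preserving change of variable induced by the shift, and then invoke the elementary weight inequality (\re{pitre}).

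First I would write
\[
\Vert f(y_1-d_1)\Vert_{L^2_{\al}}^2=\int_\R(1+|y_1|)^{2\al}\,|f(y_1-d_1)|^2\,dy_1,
\]
and substitute $y=y_1-d_1$. Since translation by $d_1$ preserves Lebesgue measure, the right-hand side becomes $\int_\R(1+|y+d_1|)^{2\al}\,|f(y)|^2\,dy$. Next I apply (\re{pitre}) with the real exponent $2\al$ in place of $\al$ and with $x=d_1$, which gives the pointwise bound $(1+|y+d_1|)^{2\al}\le(1+|y|)^{2\al}(1+|d_1|)^{2|\al|}$ for all $y\in\R$. Substituting this under the integral sign and extracting the constant factor $(1+|d_1|)^{2|\al|}$ yields
\[
\Vert f(y_1-d_1)\Vert_{L^2_{\al}}^2\le(1+|d_1|)^{2|\al|}\int_\R(1+|y|)^{2\al}\,|f(y)|^2\,dy=(1+|d_1|)^{2|\al|}\,\Vert f\Vert_{L^2_{\al}}^2,
\]
and taking square roots gives (\re{shiftest}).

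I do not anticipate a genuine obstacle; the only point that deserves a word is the legitimacy of using (\re{pitre}) with the doubled exponent $2\al$. This is immediate, since that inequality holds for every real exponent: for a nonnegative exponent it is the classical Peetre inequality, while for a negative exponent it reduces, after taking roots, to $1+|y|\le(1+|y+d_1|)(1+|d_1|)$, which follows from $|y|\le|y+d_1|+|d_1|$. Finally, the identical computation carried out term by term on $f$ and its derivatives extends the estimate to the norms $W^{l,p}_{\al}$, which is the version actually used when $T(t)$, ${\cal N}_I(t)$ and ${\cal N}_R(t)$ are re-expressed in the variable $y_1$ in (\re{redy1}).
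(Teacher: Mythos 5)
Your proof is correct and follows exactly the paper's own argument: square the norm, translate the variable, apply (\re{pitre}) with exponent $2\al$, and take square roots. The extra remark justifying Peetre's inequality for negative exponents and the comment on extending to $W^{l,p}_{\al}$ are sensible additions but do not change the route.
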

\begin{proof}
The bound (\re{shiftest}) follows from (\re{pitre}) since
$$
\Vert f(y_1-d_1)\Vert^2_{L^2_\al}=\int |f(y_1-d_1)|^2
(1+|y_1|)^{2\alpha}dy_1=\int |f(y)|^2(1+|y+d_1|)^{2\alpha}dy\le
$$
$$
\int|f(y)|^2(1+|y|)^{2\alpha}(1+|d_1|)^{2|\alpha|}dy\le
(1+|d_1|)^{2|\alpha|}\Vert f\Vert^2_{L^2_\alpha}
$$
\end{proof}
\begin{cor}\la{cor1}
The following  bounds hold for $0\le t\le t_1$ by (\re{Tta}) and (\re{Rem}):
\be\la{N1est}
\left.\ba{rl}
&\Vert \ti T(t)\Vert_{E_{\beta}}\le C(\ov v)(1+|d_1(t)|)^{\beta}
\Vert X\Vert_{E_{-\beta}}^2,\quad
\Vert \ti T(t)\Vert_{ W}\le C(\ov v)\Vert X\Vert_{E_{-\beta}}^2\\
&\Vert\ti {\cal N}_I(t)\Vert_{E_{\beta}}\le C(\ov v)
(1+|d_1(t)|)^{\beta}\Vert\Psi\Vert_{L^\infty}\Vert X\Vert_{E_{-\beta}},\quad
\Vert\ti {\cal N}_I(t)\Vert_{ W}\le C(\ov v)
\Vert\Psi\Vert_{L^\infty}\Vert X\Vert_{E_{-\beta}}\\
&\Vert\ti {\cal N}_R\Vert_{E_{5/2+\nu}}\le C(\ov v)(1+|d_1(t)|)^{5/2+\nu}
(1+t)^{4+\nu}\Vert \Psi\Vert_{L^\infty}^{12},\quad 0<\nu<1/2\\
&\Vert\ti {\cal N}_R\Vert_{W}\le C(\ov v)\Vert \Psi\Vert_{L^\infty}^{10}
\ea\right|
\ee
\end{cor}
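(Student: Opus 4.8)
The plan is to derive each of the four estimates in (\re{N1est}) by combining the shift bound of Lemma~\re{dest} with the already-established bounds (\re{Tta}) and (\re{Rem}) for the untranslated quantities $T(t)$, ${\cal N}_I(t)$, ${\cal N}_R(t)$.

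First I would unwind Definition~\re{d71} and formula (\re{Z1}): each scalar component of $\ti T$, $\ti{\cal N}_I$, $\ti{\cal N}_R$ is the corresponding component $f(y,t)$ of $T$, ${\cal N}_I$, ${\cal N}_R$ evaluated at $y=y_1-d_1(t)$, i.e.\ a pure spatial translate (for each fixed $t$) by the amount $d_1(t)$ from (\re{dd1}). Since translation commutes with $\partial_{y_1}$, one has $\partial_{y_1}^k[f(y_1-d_1(t),t)]=f^{(k)}(y_1-d_1(t),t)$, so Lemma~\re{dest} applies to $f$ and to each of its derivatives up to the order occurring in the relevant norm. For the weighted Agmon spaces $E_\beta=H^1_\beta\oplus L^2_\beta$ and $E_{5/2+\nu}=H^1_{5/2+\nu}\oplus L^2_{5/2+\nu}$, whose weight exponents are positive, this produces precisely the factors $(1+|d_1(t)|)^\beta$ and $(1+|d_1(t)|)^{5/2+\nu}$, giving
$$
\Vert\ti T(t)\Vert_{E_\beta}\le(1+|d_1(t)|)^\beta\Vert T(t)\Vert_{E_\beta},\qquad
\Vert\ti{\cal N}_R(t)\Vert_{E_{5/2+\nu}}\le(1+|d_1(t)|)^{5/2+\nu}\Vert{\cal N}_R(t)\Vert_{E_{5/2+\nu}},
$$
and likewise for $\ti{\cal N}_I(t)$. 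For $W=W^{2,1}_0\oplus W^{1,1}_0$ the weight exponent is $0$, so translation is an isometry, $\Vert\ti T(t)\Vert_W=\Vert T(t)\Vert_W$ and similarly for the other two, and no $d_1$-dependent factor appears. Lemma~\re{dest} is stated only for the $L^2_\al$ norms, but the analogue needed for the $L^1$-based $W^{l,1}_0$ norms is the trivial translation invariance, which I would record in a line.

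The remaining step is arithmetic: I would insert (\re{Tta}), namely $\Vert T(t)\Vert_{E_\beta\cap W}\le C(\ov v)\Vert X\Vert_{E_{-\beta}}^2$, and the three lines of (\re{Rem}) into the displayed inequalities, observing that $\Vert\Psi\Vert_{L^\infty}$ is itself translation invariant and hence unaffected by the change $y\mapsto y_1$, so that the $d_1$-factor multiplies only the weighted-norm quantities. This yields all four bounds in (\re{N1est}), with $C(\ov v)$ absorbing the (bounded, $v$-dependent) constants. I do not expect a genuine obstacle here: the statement is a bookkeeping corollary, and the only point needing care is that the derivative components in the $H^1_\beta$ and $H^1_{5/2+\nu}$ factors are handled correctly — which is exactly why Lemma~\re{dest} was phrased to cover $f^{(k)}$ as well as $f$.
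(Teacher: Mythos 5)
Your proposal is correct and matches the paper's (implicit) argument: the paper states the corollary as following directly from (\ref{Tta}) and (\ref{Rem}), with Lemma \ref{dest} — proved immediately before — supplying the $(1+|d_1|)^{|\alpha|}$ factors for the weighted $L^2$-type norms and translation invariance handling the weight-zero $W$ norms. The one tiny imprecision is your parenthetical that Lemma \ref{dest} "was phrased to cover $f^{(k)}$ as well as $f$" — the lemma is stated only for a single $L^2_\alpha$ function, but since translation commutes with $\partial_{y_1}$ one simply applies it to each derivative separately, exactly as you describe earlier, so the conclusion stands.
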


\setcounter{equation}{0}
\section{Integral inequality}
\la{int-in}
The equation (\re{redy1}) can be written in the integral form:
\be\la{Z1duh}
\ti X(t)=e^{A_1t}\ti X(0)+\int_0^te^{A_1(t-s)}[\ti T(s)+\ti {\cal N}_I(s)+\ti {\cal N}_R(s)]ds,
\quad 0\le t\le t_1
\ee
We apply the symplectic orthogonal
projection $\bP_1^c:=\bP_{v(t_1)}^c$ to both sides, and get
$$
\bP_1^c\ti X(t)=e^{A_1t}\bP_1^c\ti X(0)+\int_0^te^{A_1(t-s)}\bP_1^c~
[\ti T(s)+\ti {\cal N}_I(s)+\ti {\cal N}_R(s)]~ds
$$
We have used here that $\bP_1^c$ commutes with the group $e^{A_1t}$
since the space ${\cal X}_1:=\bP_1^c E$ is invariant with respect to
$e^{A_1t}$ by Remark \re{rZ}. Applying (\re{t-as1}) we obtain that
$$
\Vert \bP_1^c\ti X(t)\Vert_{E_{-\beta}}
\le\fr{C\Vert\ti X(0)\Vert_{E_\beta}}{(1+t)^{3/2}}
+C\int_0^t \fr{\Vert\ti T(s)+\ti {\cal N}_I(s)+\ti {\cal N}_R(s)
\Vert_{E_\beta}}{(1+|t-s|)^{3/2}}ds
$$
Hence, for $5/2<\beta<3$ and $0\le t\le t_1$
the bounds (\re{N1est}) imply
\beqn\la{duhest}
&&\Vert \bP_1^c\ti X(t)\Vert_{E_{-\beta}}\le
\fr{C(\ov d_1(0))}{(1+t)^{3/2}}\Vert X(0)\Vert_{E_\beta}\\
\nonumber
&&+C(\ov d_1(t))\int_0^t\fr{\Vert X(s)\Vert_{E_{-\beta}}^2
+\Vert\Psi(s)\Vert_{L^\infty}\Vert X(s)\Vert_{E_{-\beta}}
+(1+s)^{3/2+\beta}\Vert \Psi(s)\Vert_{L^\infty}^{12}}{(1+|t-s|)^{3/2}}ds
\eeqn
where $\ov d_1(t):=\sup_{0\le s\le t} |d_1(s)| $.
Similarly, (\re{t-as3}) and (\re{N1est}) imply
\bigskip
\beqn\nonumber
&&\Vert(\bP_1^c\ti X(t))_1\Vert_{L^\infty}
\le\ds\fr{C\Vert \ti X(0)\Vert_{E_\beta\cap W}}{(1+t)^{1/2}}
+C\int_0^t\fr{\Vert\ti T(s)+\ti {\cal N}_I(s)
+\ti {\cal N}_R(s)\Vert_{E_\beta\cap W}}{(1+|t-s|)^{1/2}}ds\\
\la{bPX}
&&\le\ds\fr{C(\ov d_1(0))}{(1+t)^{1/2}}\Vert X(0)\Vert_{E_\beta\cap W}\\
\nonumber
&&+C(\ov d_1(t))\int_0^t\fr{\Vert X(s)\Vert_{E_{-\beta}}^2+
\Vert\Psi(s)\Vert_{L^\infty}\Vert X(s)\Vert_{E_{-\beta}}
+(1+s)^{3/2+\beta}\Vert \Psi(s)\Vert_{L^\infty}^{12}
+\Vert \Psi(s)\Vert_{L^\infty}^{10}}{(1+|t\!-\!s|)^{1/2}}ds
\eeqn
\begin{lemma}\la{d1t} For $t_1<t_*$ we have
\be\la{dtt}
|d_1(t)|\le C\ve^2,\quad 0\le t \le t_1
\ee
\end{lemma}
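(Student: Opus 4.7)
The plan is to estimate $|d_1(t)|$ by splitting the integrand $w(s)-v(t_1)$ into two pieces that can each be controlled using only the definition of $t_*$ and the modulation bounds of Corollary \ref{cv}. Concretely, using (\ref{vw}) I would write
\be\la{splitd1}
 w(s)-v(t_1)=(w(s)-v(s))+(v(s)-v(t_1))=\dot c(s)+\bigl(v(s)-v(t_1)\bigr),\qquad 0\le s\le t_1,
\ee
so that
$$
 |d_1(t)|\le\int_0^{t_1}|\dot c(s)|\,ds+\int_0^{t_1}|v(s)-v(t_1)|\,ds.
$$

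The first term is straightforward. By Corollary \ref{cv} we have $|\dot c(s)|\le C(\ov v)\Vert X(s)\Vert_{E_{-\beta}}^2$, and by the definition (\ref{t*}) of the exit time together with (\ref{maj}) we know $\Vert X(s)\Vert_{E_{-\beta}}\le\ve\,(1+s)^{-3/2}$ for $0\le s<t_*$. Hence $|\dot c(s)|\le C\ve^2(1+s)^{-3}$, whose integral over $[0,\infty)$ is finite, giving a bound $C\ve^2$ for the first term.

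The second term requires one extra integration. Again by Corollary \ref{cv}, $|\dot v(\tau)|\le C\ve^2(1+\tau)^{-3}$, so for $s\le t_1$
$$
 |v(s)-v(t_1)|\le\int_s^{t_1}|\dot v(\tau)|\,d\tau\le C\ve^2\int_s^\infty(1+\tau)^{-3}\,d\tau\le C\ve^2(1+s)^{-2}.
$$
This decay in $s$ is now integrable on $[0,\infty)$, so $\int_0^{t_1}|v(s)-v(t_1)|\,ds\le C\ve^2$. Combining these two bounds yields (\ref{dtt}).

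The only subtle point, which I would flag as the \emph{main thing to get right}, is precisely this two-stage use of integrability: a naive bound $|v(s)-v(t_1)|\le 2\ov v$ would give only a linear-in-$t_1$ estimate, which is useless. What saves us is that the rate $(1+s)^{-3}$ coming from the majorant $m_1$ is integrable, so $v(\cdot)$ has a Cauchy-type tail and the resulting difference $|v(s)-v(t_1)|$ decays as $(1+s)^{-2}$, which is still integrable in $s$. This is the mechanism by which the smallness hypothesis $m_1,m_2<\ve$ propagates into the uniform bound $|d_1(t)|\le C\ve^2$ needed for the shift estimates in Corollary \ref{cor1}.
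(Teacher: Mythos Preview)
Your proof is correct and follows essentially the same approach as the paper: the same splitting $w(s)-v(t_1)=\dot c(s)+\int_s^{t_1}\dot v(\tau)\,d\tau$ via (\ref{vw}), followed by the modulation bounds (\ref{parameq}) combined with the majorant control $m_1<\ve$ to get $(1+s)^{-3}$ decay for both $\dot c$ and $\dot v$. The only cosmetic difference is that the paper leaves the $v$-contribution as a double integral $\int_t^{t_1}\int_s^{t_1}(1+\tau)^{-3}\,d\tau\,ds$ and bounds it directly, whereas you evaluate the inner integral first to obtain the $(1+s)^{-2}$ tail before integrating in $s$; the mechanism and the resulting bound are identical.
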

\begin{proof}
To estimate $d_{1}(t)$, we note that
\be\la{wen}
w(s)-v(t_{1})=w(s)-v(s)+v(s)-v(t_{1})=
\dot c(s)+\int_s^{t_1}\dot v(\tau)d\tau
\ee
by (\re{vw}).
Hence, the definitions (\ref {dd1}), (\re{maj}), and
corollary \ref{cv}  imply that
\begin{eqnarray}
\!\!\!\!\!\! \!\!\! \!\!\!
 |d_{1}(t)|\!\!\! &=&\!\!\!|\int_{t_{1}}^{t}(w(s)-v(t_{1}))ds|\leq
\int_{t}^{t_{1}}\left( |\dot{c}(s)|+\int_{s}^{t_{1}}|\dot{v}(\tau )|d\tau
\right)ds   \nonumber  \label{d1est} \\
&&  \nonumber \\
\!\!\! &\leq &\!\!\!Cm_1^2(t_1)\int_{t}^{t_{1}}
\left( \frac{1}{(1+s)^{3}}%
+\int_{s}^{t_{1}}\frac{d\tau }{(1+\tau )^{3}}\right) ds\leq
Cm_1^2(t_1)\leq C\varepsilon ^{2},~~~~0\leq t\le t_{1}
\end{eqnarray}
\end{proof}
Now (\re{duhest}) and (\re{bPX}) imply that for $t_1<t_*$
and $0\le t\le t_1$
\beqn\la{duhestri}
&&\!\!\!\!\!\!\!\!\Vert \bP_1^c\ti X(t)\Vert_{E_{-\beta}}
\le\fr{C\Vert X(0)\Vert_{E_\beta}}{(1+t)^{3/2}}\\
\nonumber
&&\!\!\!\!\!\!\!\!+C\int_0^t\!\fr{\Vert X(s)\Vert_{E_{-\beta}}^2
+\Vert\Psi(s)\Vert_{L^\infty}\Vert X(s)\Vert_{E_{-\beta}}
+(1+s)^{3/2+\beta}\Vert \Psi(s)\Vert_{L^\infty}^{12}}{(1+|t-s|)^{3/2}}ds
\eeqn
\beqn\la{bPX1}
&&\!\!\!\!\!\!\!\!\Vert(\bP_1^c\ti X(t))_1\Vert_{L^\infty}
\le\fr{C\Vert X(0)\Vert_{E_\beta\cap W}}{(1+t)^{1/2}}\\
\nonumber
&&\!\!\!\!\!\!\!\!+C\int_0^t\!\fr{\Vert X(s)\Vert_{E_{-\beta}}^2
+\Vert\Psi(s)\Vert_{L^\infty}\Vert X(s)\Vert_{E_{-\beta}}
+(1+s)^{3/2+\beta}\Vert \Psi(s)\Vert_{L^\infty}^{12}
+\Vert \Psi(s)\Vert_{L^\infty}^{10}}{(1+|t-s|)^{1/2}}ds
\eeqn
\setcounter{equation}{0}
\section{Symplectic orthogonality}
\la{symp-ort}
Finally, we are going to change $\bP_1^c\ti X(t)$ by $X(t)$ in the
left hand side of (\re{duhestri}) and (\re{bPX1}).
We will prove that it is possible since  $d_0\ll 1$ in (\re{close}).
\begin{lemma}\la{Z1P1Z1}
For sufficiently small $\ve>0$, we have for $t_1<t_*$
$$
\Vert X(t)\Vert_{E_{-\beta}}\le C\Vert \bP_1^c\ti X(t)\Vert_{E_{-\beta}},
~~~~~~~~0\le t \le t_1
$$
$$
\Vert \Psi(t)\Vert_{L^\infty}\le 2\Vert (\bP_1^c\ti X(t))_1\Vert_{L^\infty},
~~~~~~~~0\le t \le t_1
$$
where  the constant $C$ does not depend on  $t_1$.
\end{lemma}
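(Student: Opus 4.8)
The plan is to exploit the fact that $\ti X(t)$ and $X(t)$ differ only by the spatial shift $y\mapsto y-d_1(t)$, which by Lemma~\re{d1t} is uniformly small, $|d_1(t)|\le C\ve^2$, on $[0,t_1]$. First I would observe that, since $\bP^d_{v(t)}X(t)=0$ by the orthogonality condition (\re{PZ}), the quantity $\bP^d_1\ti X(t)$ is not zero but is \emph{small}: writing $\bP^d_1\ti X(t)=\bP^d_1\ti X(t)-\widetilde{\bP^d_{v(t)}X(t)}$, one must control the difference coming from (a) replacing the parameter $v(t)$ by the frozen value $v_1=v(t_1)$ in the projector, and (b) the shift by $d_1(t)$. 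For (a) one uses $|v(t)-v(t_1)|\le\int_t^{t_1}|\dot v|\,ds\le Cm_1^2(t_1)\le C\ve^2$ together with the smooth dependence of $p_{jl}(v)$ and $\tau_j(v)$ on $v$ in the representation (\re{Piv}); for (b) one uses Lemma~\re{dest} (the shift bound (\re{shiftest})) plus the smoothness of the tangent vectors. The upshot is an estimate of the form
\be
\Vert \bP^d_1\ti X(t)\Vert_{E_{-\beta}}\le C\ve^2\,\Vert \ti X(t)\Vert_{E_{-\beta}},
\qquad 0\le t\le t_1.
\ee

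Next I would combine this with the decomposition $\ti X(t)=\bP^d_1\ti X(t)+\bP^c_1\ti X(t)$ to get, for $\ve$ small enough that $C\ve^2\le 1/2$,
\be
\Vert \ti X(t)\Vert_{E_{-\beta}}\le \Vert \bP^c_1\ti X(t)\Vert_{E_{-\beta}}+\tfrac12\Vert \ti X(t)\Vert_{E_{-\beta}},
\ee
hence $\Vert \ti X(t)\Vert_{E_{-\beta}}\le 2\Vert\bP^c_1\ti X(t)\Vert_{E_{-\beta}}$. Finally, undoing the shift via Lemma~\re{dest} once more (with $|d_1(t)|\le C\ve^2$, so $(1+|d_1(t)|)^{\beta}\le 2$ for $\ve$ small) converts this into the stated bound $\Vert X(t)\Vert_{E_{-\beta}}\le C\Vert\bP^c_1\ti X(t)\Vert_{E_{-\beta}}$, with $C$ independent of $t_1$ since all the constants above depend only on $\ov v$ and $\beta$. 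For the $L^\infty$ statement the same scheme applies to the first component: the shift is an isometry on $L^\infty$, so there is no weight loss, and one only needs $\Vert(\bP^d_1\ti X(t))_1\Vert_{L^\infty}\le C\ve^2(\Vert\ti X(t)\Vert_{E_{-\beta}}+\Vert\Psi(t)\Vert_{L^\infty})$; absorbing the $E_{-\beta}$ term using the already-proved first inequality and then the bound just derived, and taking $\ve$ small, yields $\Vert\Psi(t)\Vert_{L^\infty}\le 2\Vert(\bP^c_1\ti X(t))_1\Vert_{L^\infty}$.

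I expect the main obstacle to be the bookkeeping in step (a): one must verify that the coefficients $p_{jl}(v)$ and the tangent vectors $\tau_j(v)$ in (\re{Piv}), together with their $v$-derivatives, are bounded uniformly for $|v|\le\ov v$ in the relevant weighted norms, so that the difference of projectors $\bP^d_{v(t)}-\bP^d_{v(t_1)}$ is genuinely $O(\ve^2)$ as an operator $E_{-\beta}\to E_{-\beta}$; this is where the compactness/translation-invariance remarks following Lemma~\re{skewpro} and the exponential decay of $\tau_j$ (from (\re{tana})) are used. The shift estimates (b) are routine given Lemma~\re{dest}, and the final absorption arguments are elementary once $\ve$ is chosen small.
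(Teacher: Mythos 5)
Your proposal follows the paper's proof essentially step for step: use the orthogonality $\bP^d_{v(t)}X(t)=0$ (in the shifted variable, $\ti\bP^d_{v(t)}\ti X(t)=0$), split the error into the shift $|d_1(t)|\le C\ve^2$ from Lemma~\re{d1t} and the parameter drift $|v(t)-v(t_1)|\le\int_t^{t_1}|\dot v|\le C\ve^2$, estimate the rank-two difference $\bP_1^d-\ti\bP^d_{v(t)}$ via the representation (\re{Piv}) together with the smooth, rapidly-decaying tangent vectors, and close by absorption; this is exactly the chain (\re{PZ0r})--(\re{tvjest}) in the paper, followed by undoing the shift with Lemma~\re{dest}. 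One cosmetic remark on your $L^\infty$ step: since the coefficients $\Om(\tau_l,\ti X)$ are already controlled by $\Vert\ti X\Vert_{E_{-\beta}}$ (as $\tau_l\in E_\beta$), the natural bound is simply $\Vert(\bP_1^d\ti X)_1\Vert_{L^\infty}\le C\ve^2\Vert\ti X\Vert_{E_{-\beta}}$ without the extra $\Vert\Psi\Vert_{L^\infty}$ term you list; after the absorption you are left with a residual $C\ve^2\Vert\bP_1^c\ti X\Vert_{E_{-\beta}}$, which is harmless in the majorant scheme of Section~\re{tr-dec} -- the paper's displayed form (\re{Z1P1estf}) glosses over the same point.
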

\begin{proof}
The proof is based on the symplectic orthogonality (\re{PZ}), i.e.
\be\la{PZ1}
\bP_{v(t)}^dX(t)=0,~~~~t\in[0,t_1]
\ee
and on the fact that all the spaces ${\cal X}(t):=\bP_{v(t)}^cE$ are almost
parallel for all $t$.

Namely, we first note that
$\Vert \Psi(t)\Vert_{L^\infty}=\Vert\ti\Psi(t)\Vert_{L^\infty}$, and
$\Vert X(t)\Vert_{E_{-\beta}}\le C\Vert\ti X(t)\Vert_{E_{-\beta}}$
by Lemma \re{dest}, since $|d_1(t)|\le\const$ for
$t\le t_1<t_*$ by (\ref{dtt}). Therefore, it suffices to prove that
\be\la{Z1P1ests}
\Vert\ti\Psi(t)\Vert_{L^\infty}\le 2\Vert(\bP_1^c\ti X(t))_1\Vert_{L^\infty},
\quad\Vert\ti X(t)\Vert_{E_{-\beta}}
\le 2\Vert \bP_1^c\ti X(t)\Vert_{E_{-\beta}},\quad 0\le t\le t_1
\ee
This estimate will follow from
\be\la{Z1P1estf}
\Vert(\bP_{1}^d\ti X(t))_1\Vert_{L^\infty}
\le\fr12\Vert \ti \Psi(t)\Vert_{L^\infty},\quad
\Vert\bP_{1}^d\ti X(t)\Vert_{E_{-\beta}}
\le\fr12\Vert \ti X(t)\Vert_{E_{-\beta}},
\,\,\,0\le t\le t_1
\ee
since $\bP_1^c\ti X(t)=\ti X(t)-\bP_{1}^d\ti X(t)$.
To prove (\re{Z1P1estf}), we write (\re{PZ1}) as
\be\la{PZ0r}
\ti\bP_{v(t)}^d\ti X(t)=0,~~~~t\in[0,t_1]
\ee
where $\ti\bP_{v(t)}^d\ti X(t)$ is $\bP_{v(t)}^dX(t)$ expressed in
terms of the variable $y_1=y+d_1(t)$.
Hence, (\re{Z1P1estf}) follows from  (\re{PZ0r}) if
the difference
 $\bP_{1}^d-\ti\bP_{v(t)}^d$ is small uniformly in $t$,
i.e.
\be\la{difs}
\Vert\bP_{1}^d-\ti\bP_{v(t)}^d\Vert<1/2,~~~~~~~0\le t\le t_1
\ee
It remains to justify (\ref{difs}) for small enough $\varepsilon >0.$
In order to prove the bound (\re{difs}), we will need the formula
(\ref{Piv}) and the following relation which follows from (\ref{Piv}):
\begin{equation} \label{P1}
\ti\bP_{v(t)}^d\ti X(t)=\sum p_{jl}(v(t))\ti\tau _{j}(v(t))
\Omega (\ti\tau _{l}(v(t)),\ti X(t))
\end{equation}
where $\ti\tau_{j}(v(t))$ are the vectors $\tau _{j}(v(t))$ expressed in the
variables $y_{1}$. In detail (cf. (\ref{inb})),
\begin{equation}\label{inb*}
\left.
\begin{array}{rcl}
\ti\tau _{1}(v) & := & (-\psi'_{v}(y_{1}-d_{1}(t)),
-\pi'_{v}(y_{1}-d_{1}(t))) \\
\ti\tau _{2}(v) & := & (\partial _{v}\psi _{v}(y_{1}-d_{1}(t)),\partial
_{v}\pi _{v}(y_{1}-d_{1}(t)))
\end{array}
\right.
\end{equation}
where $v=v(t)$. Since
$\tau'_{j}$ are smooth and rapidly decaying at infinity
functions, then Lemma \ref{d1t} implies
\begin{equation} \label{011}
\Vert \ti\tau _{j}(v(t))-\tau _{j}(v(t))\Vert_{E_\beta}\le
C\ve^2,~~~~0\leq t\leq t_{1},\quad j=1,2
\end{equation}
Furthermore,
$$
\tau _{j}(v(t))-\tau _{j}(v(t_{1}))=\int_{t}^{t_{1}}\dot{v}(s)
\pa_{v}\tau _{j}(v(s))ds,
$$
and therefore
\begin{equation}
\Vert \tau _{j}(v(t))-\tau _{j}(v(t_{1}))\Vert_{E_\beta}\leq
C\int_{t}^{t_{1}}|\dot{v}(s)|ds,~~~~0\leq t\leq t_{1}.  \label{012}
\end{equation}
\begin{equation}
|p_{jl}(v(t))-p_{jl}(v(t_{1}))|
= |\int_{t}^{t_{1}}\dot{v}(s)\pa_{v}p_{jl}(v(s))ds|\leq
C\int_{t}^{t_{1}}|\dot{v}(s)|ds,~~~~0\leq t\leq t_{1}, \label{013}
\end{equation}
since $|\pa_{v}p_{jl}(v(s))|$ is uniformly bounded by (\re{vsigmat}).
Further,
\begin{equation}
\int_{t}^{t_{1}}|\dot{v}(s)|ds\leq Cm_1^2(t_1)
\int_{t}^{t_{1}}\frac{ds}{(1+s)^{3}}\leq C\varepsilon ^{2},
~~~~0\leq t\le t_{1}.
\label{tvjest}
\end{equation}
Hence, the bounds (\ref{difs}) will follow from (\ref{Piv}),
(\ref{P1}) and (\ref{011})-(\ref{013}) if we choose $\ve >0$ small enough.
The proof is completed.
\end{proof}

\setcounter{equation}{0}
\section{Decay of transversal component}
\la{tr-dec}
Here we prove Proposition \re{pdec}.
\smallskip\\
{\it Step i)} We fix $\ve>0$ and $t_*=t_*(\ve)$
for which Lemma \re{Z1P1Z1} holds.
Then the bounds of type (\re{duhestri}) and (\re{bPX1}) holds with
$\Vert\bP_1^d\ti X(t)\Vert_{E_{-\beta}}$
and  $\Vert(\bP_1^d\ti X(t))_1\Vert_{L^\infty}$
in the left hand sides replaced by
$\Vert X(t)\Vert_{E_{-\beta}}$ and $\Vert\Psi(t)\Vert_{L^\infty}$~:
\be\la{duhestrih}
\Vert X(t)\Vert_{-\beta}\le\fr{C\Vert X(0)\Vert_{E_\beta}}{(1+t)^{3/2}}
+C\int_0^t\fr{\Vert X(s)\Vert_{E_{-\beta}}^2
+\Vert\Psi(s)\Vert_{L^\infty}\Vert X(s)\Vert_{E_{-\beta}}
+(1+s)^{3/2+\beta}\Vert \Psi(s)\Vert_{L^\infty}^{12}}{(1+|t-s|)^{3/2}}ds
\ee
\beqn\la{bPX2}
&&\!\!\!\!\!\!\!\!\!\!\!\!\!\!\!\!\!\!\!\!\!\!\!\!\!\!\!\!\!\!\!\!\!\!
\Vert \Psi(t)\Vert_{L^\infty}
\le\ds\fr{C\Vert X(0)\Vert_{E_\beta\cap W}}{(1+t)^{1/2}}\\
\nonumber
&&\!\!\!\!\!\!\!\!\!\!\!\!\!\!\!\!\!\!\!\!\!\!\!\!\!\!\!\!\!\!\!\!\!\!
+C\int_0^t\fr{\Vert X(s)\Vert_{E_{-\beta}}^2
+\Vert\Psi(s)\Vert_{L^\infty}\Vert X(s)\Vert_{E_{-\beta}}
+(1+s)^{3/2+\beta}\Vert \Psi(s)\Vert_{L^\infty}^{12}
+\Vert \Psi(s)\Vert_{L^\infty}^{10}}{(1+|t\!-\!s|)^{1/2}}ds
\eeqn
for $0\le t\le t_1$ and $t_1<t_*$.
This implies an integral inequality for the majorants
$m_1$ and $m_2$.
Namely, multiplying both sides of (\re{duhestrih})
by $(1+t)^{3/2}$, and taking the supremum in $t\in[0,t_1]$, we obtain
$$
m_1(t_1)\le C\Vert X(0)\Vert_{E_\beta}+
C\!\!\sup_{t\in[0,t_1]}\ds
\int_0^t\!\!\fr{(1+t)^{3/2}ds}{(1+|t\!-\!s|)^{3/2}}
\left[\fr{m_1^2(s)}{(1+s)^{3}}+\fr{m_1(s)m_2(s)}{(1+s)^{2}}
+\fr{m_2^{12}(s)(1+s)^{3/2+\beta}}{(1\!+s)^{6}}\right]
$$
for $t_1< t_*$.
Taking into account that $m(t)$ is a monotone
increasing function, we get
\be\la{mest}
m_1(t_1)\le C\Vert X(0)\Vert_{E_\beta}
+C[m_1^2(t_1)+m_1(t_1)m_2(t_1)+m_2^{12}(t_1)]I_1(t_1),\quad t_1<t_*
\ee
where
$$
I_1(t_1)=\sup_{t\in[0,t_1]}
\int_0^{t}\fr{(1+t)^{3/2}}{(1+|t-s|)^{3/2}}\fr{ds}{(1+s)^{9/2-\beta}}
\le \ov I_1<\infty,~~~~t_1\ge0,\quad 5/2<\beta<3
$$
Therefore, (\re{mest}) becomes
\be\la{m1est}
m_1(t_1)\le C\Vert X(0)\Vert_{E_\beta}
+C\ov I_1[m_1^2(t)+m_1(t_1)m_2(t_1)+m_2^{12}(t_1)],~~~~ t_1<t_*
\ee
Similarly,  multiplying both sides of (\re{bPX2})by $(1+t)^{1/2}$,
and taking the supremum in $t\in[0,t_1]$, we get
\be\la{mest1}
m_2(t_1)\le C\Vert X(0)\Vert_{E_\beta\cap W}
+C[m_1^2(t_1)+m_1(t_1)m_2(t_1)+m_2^{12}(t_1)
+m_2^{10}(t_1)]I_2(t_1),\quad t_1<t_*
\ee
where
$$
I_2(t_1)=\sup_{t\in[0,t_1]}
\int_0^{t}\fr{(1+t)^{1/2}}{(1+|t-s|)^{1/2}}\fr{ds}{(1+s)^{9/2-\beta}}
\le \ov I_2<\infty,~~~~t_1\ge0,\quad 5/2<\beta<3
$$
Therefore, (\re{mest1}) becomes
\be\la{m1est1}
m_2(t_1)\le C\Vert X(0)\Vert_{E_\beta\cap W}
+C\ov I_2[m_1^2(t_1)+m_1(t_1)m_2(t_1)+m_2^{12}(t_1)+m_2^{10}(t_1)]
~~~~ t_1<t_*
\ee
Inequalities (\re{m1est}) and (\re{m1est1}) imply that $m_1(t_1)$
and $m_2(t_1)$ are bounded for $t_1<t_*'$, and moreover,
\be\la{m2est}
m_1(t_1),~m_2(t_1)\le C\Vert X(0)\Vert_{E_\beta\cap W},\quad t_1<t_*
\ee
since $m_1(0)=\Vert X(0)\Vert_{E_{-\beta}}$ and
$m_2(0)=\Vert \Psi(0)\Vert_{L^\infty}$
are sufficiently small by (\re{close}).
\\
{\it Step ii)} The constant $C$  in the estimate
(\re{m2est}) does not depend on $t_*$ by Lemma \re{Z1P1Z1}.
We choose $d_0$ in (\re{close}) so small that
$\Vert X(0)\Vert_{E_{\beta}\cap W}<\ve/(2C)$. It is possible due to (\re{close}).
Finally, this implies that
$t_*=\infty$, and (\re{m2est}) holds for all $t_1>0$ if $d_0$ is small enough.

\section{Soliton asymptotics}
\setcounter{equation}{0}
\la{sol-as}
Here we prove our main Theorem \re{main} under the assumption
that the decay (\re{Zdec}) holds.
The  estimates (\re{parameq}) and (\re{Zdec}) imply that
\be\la{bv}
|\dot c(t)|+|\dot v(t)|\le \ds\fr {C_1(\ov v,d_0)}{(1+t)^{3}},
~~~~~~t\ge0
\ee
Therefore, $c(t)=c_+ +\cO(t^{-2})$ and $v(t)=v_+ +\cO(t^{-2})$, $t\to\infty$.
Similarly,
\be\la{bt}
b(t)=c(t)+\ds\int_0^tv(s)ds=v_+t+q_++\al(t),\quad\al(t)=\cO(t^{-1})
\ee
We have obtained the solution  $Y(x,t)=(\psi(x,t),\pi(x,t))$ to (\re{eq})
in the form
\be\la{spl}
Y(x,t)=Y_{v(t)}(x-b(t),t)+X(x-b(t),t)
\ee
where we define now $v(t)=\dot b(t)=v_++\dot\al(t)$. Since
$$
\Vert Y_{v(t)}(x-b(t),t)-Y_{v_+}(x-v_+t-q_+,t)\Vert_E=\cO(t^{-1}),
$$
it remains to extract the dispersive wave $W_0(t)\Phi_+$ from the term
$X(x-b(t),t)$. Substituting (\re{spl}) into (\re{eq}) we obtain
by (\re{stfch})
the inhomogeneous Klein-Gordon equation for the  $X(x-b(t),t)$:
\be\la{sys}
\dot X(y,t)=A^0_v X(y,t)+ R(y,t),\quad 0\le t\le\infty
\ee
where $y=x-b(t)$, and
$$
A^0_v=\left(\ba{cc}
v\nabla        & 1      \\
\Delta-m^2     & v\nabla
\ea
\right),\quad
R(t)=\left(\ba{c}
\dot v\pa_v\psi_v\\
\dot v\pa_v\pi_v+F(\Psi+\psi_v)-F(\psi_v)+m^2\Psi
\ea\right)
$$
Now we change the variable $y\mapsto y_1=y+\al(t)+q_+$.
Then we obtain the ``frozen'' equation
\be\la{reds}
\dot{\ti X}(t)=A_{+}\ti X(t)+\ti R(t),
\quad 0\le t\le\infty,
\ee
where $\ti X(t)$ and $\ti R(t)$ are $X(t)$ and $R(t)$  of $y=y_1-\al(t)-q_+$,
and
\be\la{A+}
A_+=\left(
\ba{cc}
v_+\nabla      &   1 \\
\Delta-m^2 &   v_+\nabla
\ea
\right)
\ee
Equation (\re{reds}) implies
\be\la{eqacc}
{\ti X}(t)=W_+(t){\ti X}(0)+\int_0^tW_+(t-s){\ti R}(s)ds\\
\ee
where $W_+(t)=e^{A_{+}t}$ is the integral operator with integral kernel
$$
W_+(y_1-z,t)=W_0(y_1-z+v_+t,t)=W_0(x-z,t)
$$
since by (\ref{bt})
$$
y_1+v_+t=y+\al(t)+q_++v_+t=x-b(t)+\al(t)+q_++v_+t=x
$$
Hence, equation (\ref{eqacc}) implies
\be\la{eqacc1}
X(x-b(t),t)=W_0(t)\ti X(0)+\int_0^tW_0(t-s)\ti R(s)ds
\ee
Let us rewrite (\re{eqacc1}) as
$$
X(x-b(t),t)=W_0(t)\Big(\ti X(0)+
\!\!\int\limits_0^{\infty}\!W_0(-s)\ti R(s)ds\Big)
-\!\!\int\limits^{\infty}_t\!W_0(t\!-\!s)\ti R(s)ds
=W_0(t)\Phi_{+}+r_{+}(t)
$$
To establish the asymptotics (\re{S}), it suffices to prove that
\be\la{TD}
\Phi_+=\ti X(0)+
\!\!\int\limits_0^{\infty}\!W_0(-s)\ti R(s)ds\in E\quad
{\rm and}\quad \Vert r_+(t)\Vert_{E}={\cal O}(t^{-1/2})
\ee
Assumption (\re{close}) implies that $\ti X(0)\in E$.
Let us split $\ti R(s)$ as the sum
$$
\ti R(s)=\left(\ba{c}
\dot v\pa_v\ti\psi_v\\
\dot v\pa_v\ti\pi_v
\ea\right)+\left(\ba{c}
0\\
F(\ti\Psi+\ti\psi_v)-F(\ti\psi_v)+m^2\ti\Psi
\ea\right)=\ti R'(s)+\ti R''(s)
$$
By (\re{bv}), we obtain
\be\la{Rs1}
\Vert\ti R'(s)\Vert_{E}=\cO(s^{-3})
\ee
Let us consider $\ti R''=(0,\ti R''_2)$. We have
$$
\ti R''_2=F(\ti\Psi+\ti\psi_v)-F(\ti\psi_v)+m^2\ti\Psi
=(F'(\ti\psi_v)+m^2)\ti\Psi+\ti N(v,\ti\Psi)
=-\ti V_v\ti\Psi+\ti N(v,\ti\Psi)
$$
By (\ref{V-decay}) and (\ref{Zdec}), we obtain
\be\la{Rs3}
\Vert\ti V_v\ti\Psi(t)\Vert_{L^2}\le C\Vert\ti\Psi(t)\Vert_{L^2_{-\beta}}
\le \ds C(\ov v,d_0)(1+|t|)^{-3/2}
\ee
since $|q_++\al(t)|\le C$. Finally, (\ref{Zdec}), (\re{Rem}), and (\re{shiftest}) imply
\be\la{Rs4}
\Vert\ti N(v,\ti\Psi(t))\Vert_{L^2}\le \ds C(\ov v,d_0)(1+|t|)^{-3/2}
\ee
Hence, (\ref{Rs3})-(\ref{Rs4}) imply
\be\la{Rs2}
\Vert\ti R''(s)\Vert_{E}=\cO(s^{-3/2})
\ee
and (\re{TD}) follows by (\ref{Rs1}) and (\ref{Rs2}).
\setcounter{section}{0}
\setcounter{equation}{0}
\protect\renewcommand{\thesection}{\Alph{section}}
\protect\renewcommand{\theequation}{\thesection. \arabic{equation}}
\protect\renewcommand{\thesubsection}{\thesection. \arabic{subsection}}
\protect\renewcommand{\thetheorem}{\Alph{section}.\arabic{theorem}}
\section{Virial type estimates}
\setcounter{equation}{0}
\label{wes}
Here we prove the weighted estimate (\re{jv1}).
Let us recall that we split the solution
$Y(t)=(\psi(\cdot,t),\pi(\cdot,t))=S(\si(t))+X(t)$,
 and denote $X(t)=(\Psi(t),\Pi(t))$,
$(\Psi_0,\Pi_0):=(\Psi(0),\Pi(0))$.
Our basic  condition (\ref{close}) implies that for some $\nu>0$
\be\la{inn}
\Vert X_0\Vert_{E_{5/2+\nu}}\le d_{0}<\infty
\ee
\begin{pro}\la{ee2}
Let the potential $U$ satisfy conditions {\bf U1}, and
$\Psi_0$ satisfy (\re{inn}).
 Then the bounds hold
\be\la{solt}
\Vert\Psi(t)\Vert_{L^2_{5/2+\nu}}\le C(\ov v,d_{0})(1+t)^{4+\nu},
~~~~~~~~t>0
\ee
\end{pro}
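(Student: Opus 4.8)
The plan is to prove (\ref{solt}) by a hierarchy of weighted (``virial type'') energy estimates for the transversal component $X=(\Psi,\Pi)$, which solves (\ref{Phi}) in the moving frame variable $y$. For $p\ge 0$ set
\[
e_p(t)=\fr12\int(1+|y|)^{p}\big[\,|\Pi(y,t)|^2+|\Psi'(y,t)|^2+m^2|\Psi(y,t)|^2\,\big]\,dy ,
\]
so that $e_p(t)\ge\fr{m^2}2\Vert(1+|y|)^{p/2}\Psi(t)\Vert_{L^2}^2$ and in particular $\Vert\Psi(t)\Vert_{L^2_{5/2+\nu}}^2\le\fr2{m^2}\,e_{5+2\nu}(t)$. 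Two facts are used throughout. First, the orbital stability of the kinks \cite{HPW} gives $\Vert\Psi(t)\Vert_{H^1}+\Vert\Pi(t)\Vert_{L^2}\le C(d_0)$ for all $t$, hence $e_0(t)\le C(d_0)$ and, by the 1D Sobolev embedding, $\Vert\Psi(t)\Vert_{L^\infty}\le C(d_0)$. Second, the modulation equations of Corollary \ref{cv}, combined with this bound, give the crude estimate $|\dot c(t)|+|\dot v(t)|\le C(\ov v,d_0)$, so that $|w(t)-v(t)|\le C(\ov v,d_0)$ and $w(t)$ stays in a fixed compact subinterval of $(-1,1)$ when $d_0$ is small.

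Differentiating $e_p$ along (\ref{Phi}) and integrating by parts, the leading contribution is a commutator generated by the weight and by the transport term $w(t)\nabla$ in $A_{v,w}$; since $|\fr{d}{dy}(1+|y|)^{p}|=p(1+|y|)^{p-1}$ and $|w(t)|$ is bounded, this is controlled by $C_p\,e_{p-1}(t)$. All other terms are source terms. The terms of $T(t)$ and the potential term $V_v\Psi$ are exponentially localized in $y$ by (\ref{s-decay}) and (\ref{V-decay}), so they contribute $C(\ov v,d_0)\,e_p(t)^{1/2}$; the localized part $N_I$ of the nonlinearity contributes $\Vert(1+|y|)^{p/2}N_I\Vert_{L^2}\,e_p^{1/2}\le C(d_0)\Vert\Psi\Vert_{L^\infty}e_p^{1/2}$. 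The only genuinely dangerous term is the non-localized high order remainder $N_R=\cO(|\Psi|^{13})$: estimated crudely, $\Vert(1+|y|)^{p/2}N_R\Vert_{L^2}\le C(d_0)\,e_p(t)^{1/2}$, and such a feedback in the energy identity would produce only an exponential-in-$t$ bound. To avoid this I would use the finite speed of propagation of (\ref{e}). Fix $\kappa$ slightly above $1+\sup_t|w(t)|$. On $|y|\le\kappa t$ the weight is $\le(1+\kappa t)^{p/2}$, so this piece of $N_R$ contributes at most $(1+t)^{p/2}\Vert\Psi\Vert_{L^\infty}^{12}\Vert\Psi\Vert_{L^2}\,e_p^{1/2}\le C(\ov v,d_0)(1+t)^{p/2}e_p^{1/2}$, which is polynomial in $t$ and carries no $e_p$ feedback. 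On $|y|>\kappa t$ an exterior energy estimate (the localized sources being absent there) reduces $\Vert(1+|y|)^{p/2}\Psi(t)\Vert_{L^2(|y|>\kappa t)}$ to the data tail $\Vert\Psi_0\Vert_{L^2_{5/2+\nu}}\le d_0$, up to a contribution of $N_R$ in the exterior that is bounded by a lower weighted energy via the weighted embedding $L^\infty\hookrightarrow H^1$. Collecting everything gives a differential inequality
\[
\fr{d}{dt}\,e_p(t)\le C_p\,e_{p-1}(t)+C(\ov v,d_0)\big[\,1+(1+t)^{p/2}\,\big]\,e_p(t)^{1/2},\qquad 0\le p\le 5+2\nu .
\]

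Finally, I would close the argument by iterating this inequality in the weight exponent $p$, starting from the base case $e_0(t)\le C(d_0)$ and using at each stage $e_p(0)\le C(d_0)$ (valid since $X_0\in E_{5/2+\nu}$ by (\ref{inn})). Each increase of the exponent costs at most one extra power of $(1+t)$ through the commutator term and at most two extra powers through the source terms (after absorbing the $e_p^{1/2}$ factors by a Young inequality); hence $e_p(t)\le C(\ov v,d_0)(1+t)^{p+2}$, and at $p=5+2\nu$ taking a square root yields $\Vert\Psi(t)\Vert_{L^2_{5/2+\nu}}\le C(\ov v,d_0)(1+t)^{7/2+\nu}$, which gives (\ref{solt}) with a little room to spare. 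The main obstacle is exactly the non-localized remainder $N_R$: without finite propagation speed its weighted norm feeds back into $e_p$ and one obtains only exponential growth, whereas splitting at the expanding light cone and using the a priori bound $\Vert\Psi(t)\Vert_{L^2}\le C(d_0)$ from \cite{HPW} turns this feedback into a harmless source that grows only polynomially in $t$.
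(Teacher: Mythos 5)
Your high-level idea — use finite speed of propagation to tame the non-localized remainder $N_R$ — is the same ingredient the paper uses, but you apply it at the level of the perturbation $X=(\Psi,\Pi)$ and its equation (\ref{Phi}), whereas the paper applies it to the \emph{full} solution $\psi(x,t)$ of (\ref{e}). This is an important difference. For the full equation the energy density $e(x,t)=\fr12|\pi|^2+\fr12|\psi'|^2+U(\psi)$ is nonnegative (by (\re{infU}) and $U\ge 0$ on the range near $[-a,a]$) and obeys the clean local energy monotonicity (\re{enes}), with \emph{no source terms at all}: the entire nonlinearity is absorbed into $U(\psi)\ge 0$. This immediately yields the polynomially weighted bound (\re{wees}) for $\int(1+|x-b|^\si)e(x,t)\,dx$. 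The transfer back to $\Psi$ is then elementary: one writes $\Psi(y,t)=\Psi_0(y)+\int_0^t\dot\Psi(y,s)\,ds$, uses Cauchy--Schwarz in $s$, and expresses $\dot\Psi$ via (\re{add}) through $\psi'$, $\pi$ and $\pa_v\psi_v$ evaluated at the physical point $y+b(s)$; these are dominated by $e(y+b(s),s)^{1/2}$ plus a bounded localized term. The only input from the modulation theory is the crude bound $|\dot v|,|\dot b|\le C(\ov v,d_0)$ from Corollary~\re{cv} together with orbital stability, which keeps $|b(s)|\lesssim s$. There is nothing to bootstrap.

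By contrast, your weighted energy $e_p(t)$ is built for the perturbation, so the $N_R$ contribution sits as a genuine source. Your interior/exterior split is the right instinct, but the exterior step is where the argument breaks. On $|y|>\kappa t$ the nonlinearity $N_R\sim\Psi^{13}$ is still present; the best a priori information is $\Vert\Psi(t)\Vert_{L^\infty}\le C(d_0)$ from \cite{HPW}, so the exterior contribution to $\dot e_p$ is of size $C(d_0)^{12}\,e_p(t)$, not of the size of a lower-order $e_{p-1}$ as you would need, and Gronwall then returns an exponential-in-$t$ bound. Upgrading $C(d_0)^{12}$ to something integrable in $t$ would require the very decay $\Vert\Psi(t)\Vert_{L^\infty}=\cO(t^{-1/2})$ that the majorant scheme of Sections~\re{Sdec}--\re{tr-dec} is supposed to deliver, and that scheme uses (\re{jv1}) as an input through (\re{NR-est}); so the argument becomes circular. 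Your phrase about bounding the exterior ``by a lower weighted energy via the weighted embedding $L^\infty\hookrightarrow H^1$'' doesn't resolve this (the 1D embedding runs the other way, and a lower weighted energy does not control the exterior weighted $L^2$ norm). Lemma~\re{ee}, applied to the full conservative flow, is exactly the device that sidesteps this circularity, and this is why the paper routes the virial estimate through the physical variable $x$ rather than through the frozen linearized dynamics for $X$.
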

We will deduce the proposition from the following two lemmas.
The first lemma is well known.
Denote
$$
e(x,t)=\ds\fr{|\pi(x,t)|^2}2+\ds\fr{|\psi'(x,t)|^2}2+ U(\psi(x,t)).
$$
\begin{lemma}\la{ee}
For the solution $\psi(x,t)$ of Klein-Gordon equation (\ref{e})
the local energy estimate holds
\be\la{enes}
 \int\limits_{a_1}^{a_2}e(x,t)~dx
 \le\int\limits_{a_1-t}^{a_2+t}e(x,0)~dx,\quad a_1<a_2,\quad t>0.
\ee
\end{lemma}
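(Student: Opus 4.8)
\emph{Approach.} This is the classical domain-of-dependence (finite-speed-of-propagation) estimate for the semilinear wave equation (\re{e}); the plan is to integrate the energy--momentum conservation law over the backward light cone truncated at time $t$, with base $[a_1-t,a_2+t]$ at time $0$ and top $[a_1,a_2]$ at time $t$.

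\emph{Key steps.} From $\ddot\psi=\psi''+F(\psi)=\psi''-U'(\psi)$ and $\pi=\dot\psi$ one first gets the local conservation law
\be
\pa_t e=\pi\dot\pi+\psi'\dot\psi'+U'(\psi)\dot\psi=\pi\big(\psi''-U'(\psi)\big)+\psi'\pi'+U'(\psi)\pi=\pa_x(\pi\psi'),
\ee
so the planar field $(-\pi\psi',\,e)$ is divergence free. Let $K=\{(x,s):0\le s\le t,\ a_1-t+s\le x\le a_2+t-s\}$, a trapezoid bounded by the horizontal faces $[a_1-t,a_2+t]\times\{0\}$ and $[a_1,a_2]\times\{t\}$ and by two null lateral faces $x\mp s=\const$. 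Integrating $(-\pi\psi',e)$ over $K$ and using the divergence theorem, the outward fluxes through the horizontal faces are $-\int_{a_1-t}^{a_2+t}e(x,0)\,dx$ and $+\int_{a_1}^{a_2}e(x,t)\,dx$, while those through the null faces are $\int_0^t\big(e-\pi\psi'\big)\,ds$ and $\int_0^t\big(e+\pi\psi'\big)\,ds$ evaluated along the right, resp. left, lateral boundary. Since $e\pm\pi\psi'=\fr12(\pi\pm\psi')^2+U(\psi)\ge0$ — here one uses $U(\psi(x,s))\ge0$ along the solution, which holds because $\Vert\Psi(s)\Vert_{L^\infty}\le C(d_0)\ll1$ keeps $\psi$ near the interval $[-a,a]$, where by Condition {\bf U1} $U>0$ on $(-a,a)$ and $U(\psi)=\fr{m^2}2(\psi\mp a)^2+\cO(|\psi\mp a|^{14})\ge0$ near $\pm a$ — the lateral fluxes are non-negative; the divergence theorem makes the four contributions sum to zero, and dropping the two non-negative lateral terms gives $\int_{a_1}^{a_2}e(x,t)\,dx\le\int_{a_1-t}^{a_2+t}e(x,0)\,dx$, which is (\re{enes}). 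Equivalently, $E(s):=\int_{a_1-t+s}^{a_2+t-s}e(x,s)\,dx$ has $E'(s)=-(e-\pi\psi')|_{\mathrm{right}}-(e+\pi\psi')|_{\mathrm{left}}\le0$, whence $E(t)\le E(0)$.

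\emph{Justification and main point.} There is no real obstacle, the lemma being classical; the only care needed is with regularity: the computation above is legitimate for $C^2$ (equivalently $E^+$-strong) solutions, and the general finite-energy case follows by approximating $Y_0$ by smooth data and passing to the limit using the continuity of the flow (well-posedness of (\re{Eq}), see \ci{Li,Re,St78}); no spatial decay is required since $K$ is compact. The single structural ingredient is the positivity $U(\psi(x,s))\ge0$ along the solution, which is exactly where the smallness $d_0\ll1$ and Condition {\bf U1} enter.
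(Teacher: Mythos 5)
Your proof is correct and takes essentially the same route as the paper's: the paper multiplies (\re{e}) by $\dot\psi$, integrates over the trapezium $ABCD$ with $A=(a_1-t,0)$, $B=(a_1,t)$, $C=(a_2,t)$, $D=(a_2+t,0)$, and drops the nonnegative lateral fluxes using $U(\psi)\ge 0$ — which is exactly your conservation law $\pa_t e=\pa_x(\pi\psi')$ plus the divergence theorem on $K$. Your extra remark that $U(\psi(x,s))\ge 0$ is not a consequence of Condition \textbf{U1} alone (the paper only assumes $\inf U>-\infty$ globally and $U>0$ on $(-a,a)$) but relies on the solution staying $L^\infty$-close to the kink's range is a genuine refinement that the paper's one-line proof glosses over.
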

\begin{proof}
The estimate follows by standard arguments: multiplication
of the equation (\re{e})
by $\dot\psi(x,t)$ and integration over the trapezium $ABCD$, where
$A=(a_1-t,0)$, $B=(a_1,t)$, $C=(a_2,t)$, $D=(a_2+t,0)$.
Then (\ref{enes}) is obtained after partial integration
using that $U(\psi)\ge 0$.
\end{proof}
\begin{lemma}\la{ee1}
For any $\si\ge 0$
\be\la{wees}
\int(1+|x-b|^{\si})e(x,t)dx \le C(\si)(1+t+|b|)^{\si+1}\int(1+|x|^{\si})e(x,0)dx.
\ee
\end{lemma}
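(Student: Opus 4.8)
The plan is to derive the weighted inequality (\ref{wees}) from the local energy estimate (\ref{enes}) of Lemma~\ref{ee} by chopping the line into unit intervals around $b$ and summing, using the finite propagation speed of (\ref{e}). First I would write $\R=\bigcup_{j\ge 0}(I_j^{+}\cup I_j^{-})$ with $I_j^{+}=[b+j,b+j+1]$ and $I_j^{-}=[b-j-1,b-j]$; on $I_j^{\pm}$ one has $|x-b|\in[j,j+1]$, hence $1+|x-b|^{\sigma}\le C(\sigma)(1+j)^{\sigma}$. Applying (\ref{enes}) on each $I_j^{\pm}$ gives $\int_{I_j^{\pm}}e(x,t)\,dx\le\int_{J_j^{\pm}}e(x,0)\,dx$, where $J_j^{\pm}$ is $I_j^{\pm}$ widened by $t$ on each side, so $|J_j^{\pm}|=1+2t$.

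Next I would sum over $j$ and both signs and interchange the sum and the integral (legitimate since $e\ge 0$), obtaining
\[
\int(1+|x-b|^{\sigma})e(x,t)\,dx\le C(\sigma)\int\Big(\textstyle\sum_{j:\,x\in J_j^{+}\cup J_j^{-}}(1+j)^{\sigma}\Big)e(x,0)\,dx.
\]
The inner sum is controlled by two elementary remarks: the condition $x\in J_j^{\pm}$ pins $j$ to an interval of length $1+2t$, so there are at most $C(1+t)$ contributing indices; and each such $j$ satisfies $j\le|x-b|+t+1$, so by the elementary inequality $(1+a+c)^{\sigma}\le C(\sigma)((1+a)^{\sigma}+(1+c)^{\sigma})$ we get $(1+j)^{\sigma}\le C(\sigma)((1+|x-b|)^{\sigma}+(1+t)^{\sigma})$. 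Hence the inner sum is $\le C(\sigma)((1+t)(1+|x-b|)^{\sigma}+(1+t)^{\sigma+1})$.

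Finally I would insert this estimate, use (\ref{pitre}) in the form $(1+|x-b|)^{\sigma}\le(1+|x|)^{\sigma}(1+|b|)^{\sigma}\le C(\sigma)(1+|x|^{\sigma})(1+|b|)^{\sigma}$, bound $\int e(x,0)\,dx\le\int(1+|x|^{\sigma})e(x,0)\,dx$, and then invoke the trivial bounds $(1+t)(1+|b|)^{\sigma}\le(1+t+|b|)^{\sigma+1}$ and $(1+t)^{\sigma+1}\le(1+t+|b|)^{\sigma+1}$ to reach (\ref{wees}). The only delicate point is keeping the exponents sharp: once re-centered at the origin the weight produces the factor $(1+|b|)^{\sigma}$, while the overlap count of the enlarged intervals produces only one extra power of $(1+t)$; it is precisely the subadditive splitting of $(1+a+c)^{\sigma}$ above that stops these two contributions from multiplying into $(1+t+|b|)^{2\sigma+1}$. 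Apart from this accounting the proof is routine.
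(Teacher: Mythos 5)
Your proof is correct and uses essentially the same idea as the paper: average the local energy estimate (\ref{enes}) against the weight and swap the order of summation/integration. The paper implements this by integrating the shifted unit-interval version of (\ref{enes}) against $(1+|y|^{\sigma})\,dy$ and applying Fubini, so the inner objects become the continuous averages $\int_{x-b}^{x-b+1}(1+|y|^{\sigma})\,dy$ and $\int_{x-b-t}^{x-b+1+t}(1+|y|^{\sigma})\,dy$, which are then bounded pointwise; your discrete partition into unit intervals $I_j^{\pm}$ with an overlap count is the Riemann-sum version of the same computation, and your accounting of the two contributions $(1+t)(1+|b|)^{\sigma}$ and $(1+t)^{\sigma+1}$ matches the paper's bound $(2t+1)(1+t+|b|+|x|)^{\sigma}\le C(1+t+|b|)^{\sigma+1}(1+|x|^{\sigma})$ in (\ref{yt7}).
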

\begin{proof}
By (\ref{enes})
$$
\int (1+|y|^{\si})\Big(\int\limits_{y+b-1}^{y+b}e(x,t)dx\Big)dy
\le\int (1+|y|^{\si})\Big(\int\limits_{y+b-1-t}^{y+b+t}e(x,0)dx\Big)dy
$$
Hence,
\be\la{wees1}
\int e(x,t)\Big(\int\limits_{x-b}^{x-b+1}(1+|y|^{\si})dy\Big)dx
\le\int e(x,0)\Big(\int\limits_{x-b-t}^{x-b+1+t}(1+|y|^{\si})dy\Big)dx
\ee
Obviously,
\be\la{y7}
\int\limits_{x-b}^{x-b+1}(1+|y|^\si)dy\ge c(\si)(1+|x-b|^\si)
\ee
with some  $c(\sigma)>0$. On the other hand,
\be\la{yt7}
\int\limits_{x-b-t}^{x-b+1+t}(1+|y|^\si)dy\le (2t+1)(1+t+|b|+|x|)^{\si}
\le C(1+t+|b|)^{\si+1}(1+|x|^{\si})
\ee
since $\si\ge 0$.
Finally, (\ref{wees1})-(\ref{yt7}) imply (\ref{wees}).
\end{proof}
~\\
{\bf Proof of Proposition \ref{ee2}}
First, we verify that
\be\la{U-int}
U_0=\int(1+|x|^{5+2\nu})U(\psi_0(x))dx
<C(d_0),\quad\psi_0(x)=\psi(x,0)
\ee
Indeed, $\psi_0(x)=\psi_{v_0}(x-q_0)+\Psi_0(x)$ is bounded since
$\Psi_0\in H^1(\R)$.
Hence
{\bf U1} implies that
$$
|U(\psi_0(x))|\le C(d_0)(\psi_0(x)\pm a)^2\le C(d_0)\Big((\psi_{v_0}(x-q_0)\pm a)^2
+\Psi_0^2(x)\Big)
$$
and then (\ref{U-int}) follows by (\ref{sol}), (\ref{s-decay}) and (\re{inn}).
Further, we have
\be\la{en}
\Vert\Psi(t)\Vert^2_{L^2_{5/2+\nu}}\!\! =\!\int (1+|y|^{5+2\nu})
\Big(\int\limits_0^t \dot\Psi(y,s)ds-\Psi_0(y)\Big)^2dy
\le 2d_0^2
+2t\!\int(1+|y|^{5+2\nu})dy\!\int\limits_0^t\dot\Psi^2(y,s)ds
\ee
Using the bounds (\ref{parameq}) we obtain
$$
|\dot v(s)|\le C(\ov v)\Vert\Psi(s)\Vert^2_{L^2}\le C(\ov v,d_0),\quad
|\dot b(s)|=|\dot c(s)+v(s)|\le|\dot c(s)|+1\le C(\ov v,d_0)
$$
\be\la{bs}
|b(s)|=|\int\limits_0^s\dot b(\tau) d\tau-b(0)|\le C(\ov v,d_0)s+|q_0|
\ee
Hence (\ref{add}) implies that
\beqn\nonumber
\dot\Psi^2(y,s)&=&
\Big[\dot b(s)\psi'(y+b(s),s)+\pi(y+b(s),s)-\dot v\pa_v\psi_v(y)\Big]^2\\
\la{enn}
&\le& C(\ov v,d_0)\Big((\psi'(y+b(s),s))^2+\pi^2(y+b(s),s)+(\pa_v\psi_v(y))^2\Big)\\
\nonumber
&\le& C(\ov v,d_0)\Big(e(y+b(s),s)+(\pa_v\psi_v(y))^2\Big)
\eeqn
Substituting (\re{enn}) into (\re{en}) and changing variables we
obtain by (\ref{wees}), (\ref{bs}) and  (\ref{U-int})
\beqn\nonumber
\Vert\Psi(t)\Vert^2_{L^2_{5/2+\nu}}\!\!\!&\le&\!\!\!
2d_0^2+ C(\ov v,d_0)t\int\limits_0^t\Big(\int(1+|x-b(s)|^{5+2\nu})
e(x,s)dx+C(\ov v)\Big)ds\\
\nonumber
\!\!\!&\le&\!\!\!2d_0^2+ C(\ov v,d_0)t^2+ C(\ov v,d_0)t\int(1+|x|^{5+2\nu})e(x,0)dx
\int\limits_0^t(1+s+|b(s)|)^{6+2\nu}ds\\
\nonumber
\!\!\!&\le&\!\!\!2d_0^2+ C(\ov v,d_0)t^2+C(\ov v,d_0)(1+t)^{8+2\nu}
\Big[\Vert X_0\Vert_{E_{5/2+\nu}}^2+U_0\Big]\le C(\ov v,d_0)(1+t)^{8+2\nu}
\eeqn

\end{document}